\documentclass{article}

\usepackage{geometry}
\usepackage[utf8]{inputenc} % allow utf-8 input
\usepackage[T1]{fontenc}    % use 8-bit T1 fonts
\usepackage{hyperref}       % hyperlinks
\usepackage{url}            % simple URL typesetting
\usepackage{booktabs}       % professional-quality tables
\usepackage{amsfonts}       % blackboard math symbols
\usepackage{nicefrac}       % compact symbols for 1/2, etc.
\usepackage{microtype}      % microtypography
\usepackage{lipsum}
\usepackage{graphicx}
\graphicspath{ {./images/} }
\usepackage{subcaption}
\usepackage{amsmath}
\usepackage{amsthm}
\newtheorem{theorem}{Theorem}[section]
\newtheorem{lemma}[theorem]{Lemma}
\newtheorem{proposition}[theorem]{Proposition}

\title{Dynamical analysis of r-Chialvo neuron map with cosine memristive}

\author{
 Ajay Kumar \thanks{p22ma004@iitj.ac.in} \quad  VVMS Chandramouli \thanks{Corresponding author: chsarma@iitj.ac.in} \\
  Department of Mathematics\\
  Indian Institute of Technology Jodhpur\\
  Rajasthan, India, 342030. 
}

\begin{document}
\maketitle
\begin{abstract}
In this work, we construct a novel two-dimensional discrete neuron map by incorporating a cosine-based memristor into the reduced Chialvo neuron map to examine the dynamical analysis of electromagnetic modulation. The nonlinear current-voltage characteristics of the memristor enrich the neuron map's behavior, leading to diverse firing regimes, stability behaviors, and chaotic attractors. This study begins to establish the equilibrium points using both analytical and numerical methods. Additionally, we determine the conditions on parameters under which the proposed map exhibits a Neimark-Sacker bifurcation. Further, the numerical study reveals the antimonotonicity structure through the forward and backward bifurcation diagrams. The model exhibits a wide range of codimension-one and codimension-two bifurcation patterns, including Neimark-Sacker, period-doubling, saddle-node, generalized period-doubling, cusp-point, fold-flip, and various resonance structures (1:1, 1:2, 1:3, and 1:4). We also observe that the coexistence of multistable attractors including a stable limit cycle, a period-five attractor, and a chaotic attractor, along with their respective basins of attraction. Furthermore, we extend this analysis to the network of neurons under the ring-star configuration and discuss several spatiotemporal patterns. This network investigation reveals complex collective patterns, including imperfect synchronization, clustered patterns, and multi-chimera state phenomena, which have not been previously observed in existing Chialvo-based studies. These results highlight the potential of the discrete memristor-based neuron map for advancing theoretical neurodynamics and offer a robust framework for investigating low-dimensional yet dynamically rich neuron systems.
\end{abstract}

\noindent\textbf{Keywords:} Neuron map, Memristor, Neimark-Sacker bifurcation, Multistability, Ring-star network, Multi-chimera state, Imperfect synchronization.
% keywords can be removed
% \keywords{Neuron map, Memristor, Neimark-Sacker bifurcation, Multistability, Ring-star network, Multi-chimera state, Imperfect synchronization.}

\section{Introduction}
The dynamical study of neuron models under the influence of electromagnetic radiation plays a crucial role in understanding the nonlinear mechanisms that govern the neuronal dynamics. In this field, researchers typically analyze the dynamics of neurons using mathematical equations, such as ordinary differential equations, partial differential equations, and difference equations. Particularly, nonlinear equations are widely employed to model the neuron's firing patterns \cite{izhikevich2000neural}. The neurodynamics research field began with the introduction of the HH neuron model in 1952 \cite{hodgkin1952quantitative}, and subsequently, biological modeling has undergone significant evolution. This leads to the development of simplified neuron models, including the Fitzhugh-Nagumo model \cite{fitzhugh1955mathematical}, the Morris-Lecar \cite{morris1981voltage}, the Hindmarsh-Rose model \cite{hindmarsh1984model}, the Izhikevich model \cite{izhikevich2007dynamical}, and the Chialvo neuron model \cite{chialvo1995generic}. These advancements have greatly enriched theoretical research in the field of neurodynamics.

In a neuron model, incorporating flexible electronic elements such as memristors enhances the neuron's functionality. The memristor is recognized as the fourth fundamental electronic component, capturing the relationship between electric charge and magnetic flux \cite{chua2003memristor}. Its resistance varies based on the applied voltage (V) and current (I), i.e., it can exhibit nonlinear current-voltage (I–V) characteristics, enabling it to implement threshold-like behavior, which is essential for firing spikes in a neuron model. So, designing the memristor and constructing the corresponding mathematical model plays a crucial role in theoretical analysis, numerical simulation, and experimental investigation of these devices \cite{cao2024discrete,wang2025spatiotemporal}. Among the various types of memristors, the discrete ones are replicable, stable, and controllable. Therefore, they offer a significant advantage for implementation in discrete systems or digital circuits. In \cite{bao2021discrete}, it is proposed that the four types of discrete memristors (DMs) derived from the general discrete memristor: Absolute value DM, Quadratic DM, Exponential DM, and Sinusoidal DM. The designing and application of various forms of DMs have emerged as a growing trend, as studied in detail \cite{peng2020discrete,yu2022dynamic,chen2023new,xu2024dynamical,wang2025complex}. 

Neurons often exhibit complex kinetic behaviors and diverse firing patterns during the transmission of information and signal processing. These patterns can be observed by studying the dynamical behavior of the neuron models. As shown in \cite{xiu2020new,yu2021dynamics}, the discrete neuron models are capable of doing faster computations compared to the continuous neuron models. They exhibit more intricate and rich dynamical behaviors, such as hyperchaos, which can emerge at lower dimensions in discrete systems compared to continuous systems \cite{liu2022memcapacitor}. In particular, discrete models not only capture richer dynamics but also exhibit enhanced sensitivity to external influences. In this context, recent advancements in the discrete Chialvo neuron model have revealed a significant enhancement in its dynamical properties under the influence of electromagnetic stimuli \cite{muni2022dynamical,kumar2026study}. Particularly, these models showcased enhanced spiking patterns and chaotic behavior in response to external inputs. In \cite{muni2022dynamical}, a three-dimensional Chialvo neuron map with an external stimulus was initially analyzed, which demonstrated different dynamical properties such as multistability, various routes to chaos, and the emergence of chimera states and clustered states in a network. Recently in \cite{kumar2026study}, a two-dimensional reduced version of the Chialvo neuron map with external stimuli was proposed. This version preserved external stimulation and revealed further advancements, such as an increased number of stable fixed points, symmetric behavior in bifurcation, a higher number of clustered states, and proper continuous traveling wave patterns. However, both studies have some limitations, such as the higher-dimensional model is computationally intensive and structurally complex. In contrast, the two-dimensional (flux-coupled reduced Chialvo) map is more trackable and has made some advancements, but on the other hand, it loses some properties, such as exhibiting only bistability behavior rather than the richer multistability behavior. Additionally, both studies primarily focus on basic spatiotemporal patterns, such as identifying chimera states in network topologies, and not obtained more complex dynamical states, such as multi-chimera states. Multi-chimera states are particularly important as they provide a framework to describe the coexistence of multiple coherent and incoherent activity patterns observed in a neuron network, such as the localized synchronization. More detailed investigations can be found in 
\cite{wang2024multi}. Therefore, the existing extensions either increase dimensionality at the cost of analytical traceability or compromise important dynamical features. This creates a gap that motivates the development of lower-dimensional yet dynamically richer neuron maps.

The objective of the present work is to design a discrete memristor model based on a cosine function and couple it within the reduced Chialvo neuron model to obtain a lower-dimensional model, which exhibits rich dynamics compare to the previous studies on the Chialvo model with flux \cite{muni2022dynamical,kumar2026study}. The proposed formulation bridges the gap between the model simplicity and dynamical complexity, enabling a deeper investigation into the nonlinear phenomena within neuron-inspired systems. It exhibits several dynamical properties, including various stability behaviors of the fixed points, Neimark-Sacker bifurcation, multistability in the system, spiking and bursting behavior of the map, and a chaotic attractor. This study shows several improved phenomena in ring-star networks, including multi-chimera states, which are not commonly seen in recent literature, more clustered states, and an imperfectly synchronized state within the topology. These contributions enhance our understanding in the field of neurodynamics and establish the proposed model as an effective framework for studying such systems.

This paper is organized into several sections as follows. Section 2 presents the construction of the discrete memristor model and its incorporation into a reduced Chialvo map, along with an examination of its basic properties. In Section 3, we discuss analytically the unique positive fixed points and their stability. Further, it numerically explores the behavior of fixed points and their stability. Section 4 explores the theoretical aspects of Neimark-Sacker bifurcation, including numerical forward and backward bifurcation, as well as bifurcation in the parameter space. Section 5 presents the map's multistability behavior, and the corresponding basin of attraction region. Section 6 highlights the model's spiking and bursting behavior. Section 7 demonstrates the chaotic attractors and the calculation of the corresponding fractional dimension. Section 8 explores the dynamics of a neuron's network and illustrates various rich dynamical states in network topologies. Finally, Section 9 summarizes the key findings and results. All the computational results presented in this paper were obtained using MATLAB (R2024a) version.

\section{A discrete memristor model} 

\subsection{Memristor formation} 

Following the definition of a generic magnetically controlled memristor, the mathematical formulation for the discrete memristor (DM) is expressed as:

\begin{equation}\label{1}
\begin{aligned}
i_n &= W(\phi_n)\, v_n = \cos(\pi \phi_n)\, v_n, \\
\phi_{n+1} &= h k_1 v_n - k_2 \phi_n .
\end{aligned}
\end{equation}

\noindent where the symbols $v_n$ and $i_n$ represent the input voltage and output current across the terminal, respectively. The function $\phi_n$ represents the change in flux, and the function $W(.)$ is defined as a \textit{cosine} function. The parameter $h$ represents a material constant associated with the property of the memristor medium. For the analysis of the memristor model's characteristics, the initial flux is set to zero (i.e., $\phi_0=0$) and the parameter $h=1$ is considered fixed throughout this paper unless specified. Additionally, the parameters $k_1$ and $k_2$ represent the memristor parameters. The memristor model closely approximates the mathematical representation of electromagnetic waves, enabling it to effectively simulate neuron response to externally induced electromagnetic fields. A detailed structural diagram of the model, as described in Eq. \eqref{1}, is illustrated in Fig. \ref{memristoronly}. 

\begin{figure}[ht!] 
    \centering
    \subfloat[Memristor structure]{%
        \includegraphics[width=0.8\textwidth]{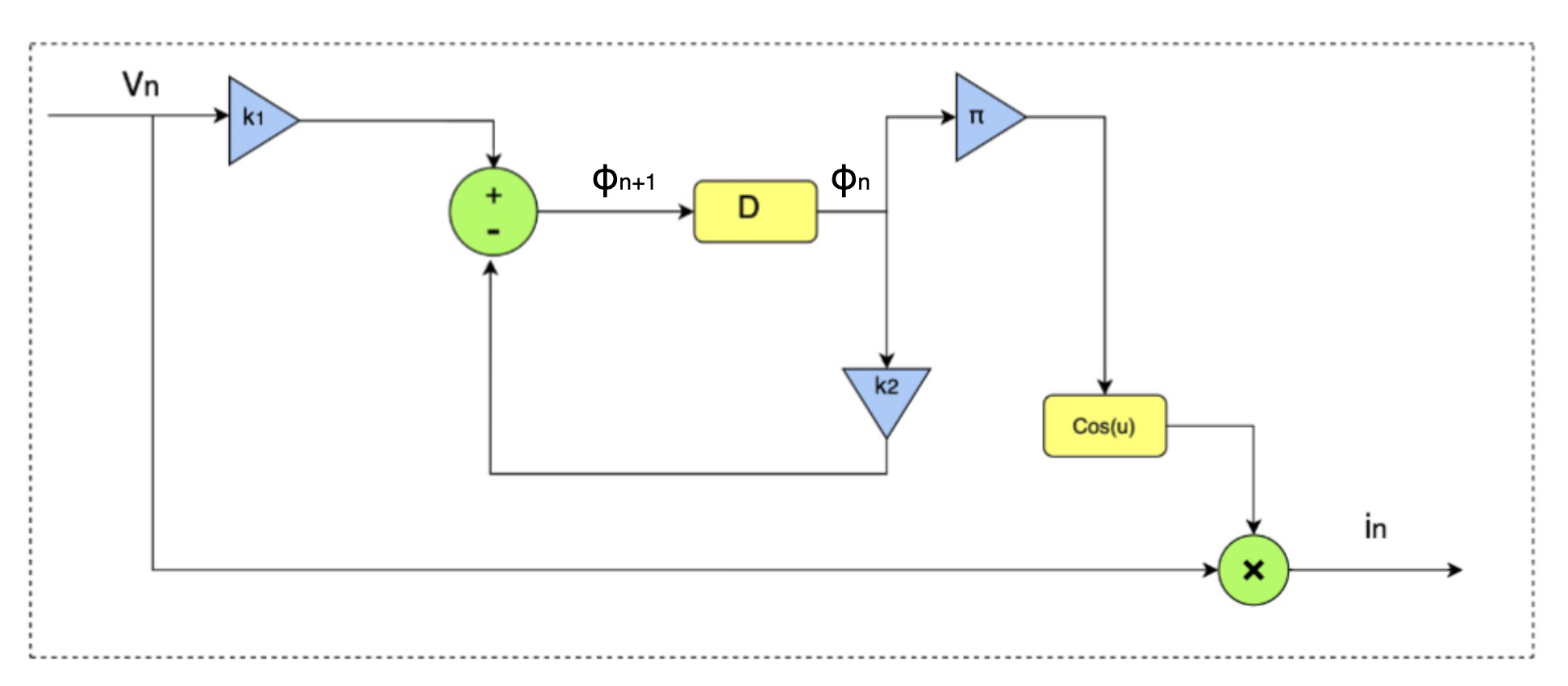}%
        }%
        \vfill
        \centering
    \subfloat[Symbolic form of memristor]{%
        \includegraphics[width=0.6\textwidth]{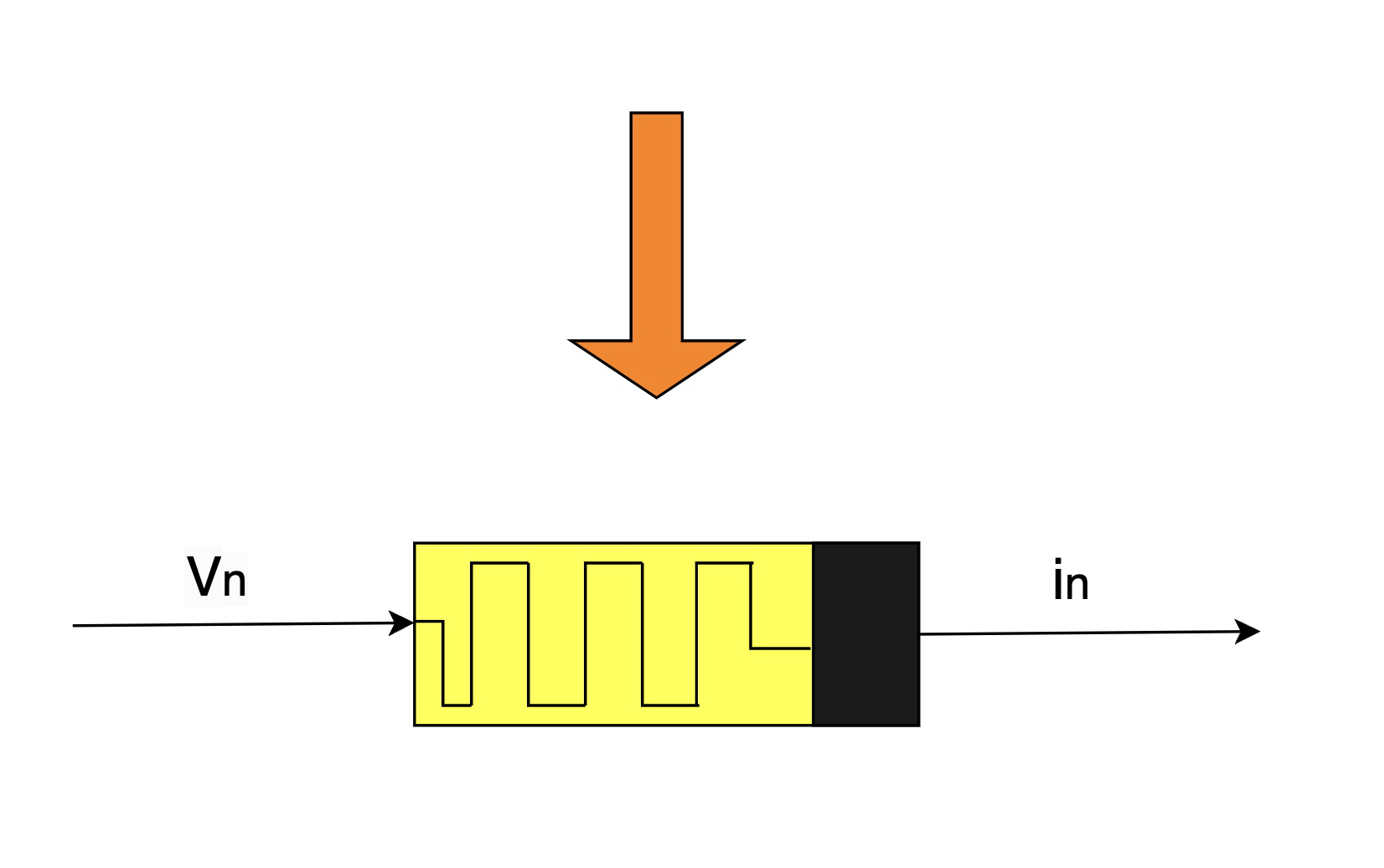}%
        }% 
        \caption{Structure diagram for the memristor model.}\label{memristoronly}
\end{figure}

After introducing the input sinusoidal voltage signal $v_n=v_m \sin(\omega {n})$ into the memristor, the resulting sequence diagram is presented in Fig. \ref{seqence}. In this figure, the red (thin) curve corresponds to the applied input voltage $v_n$ to the memristor, while the blue (thick) curve represents the output current $i_n$ generated by the memristor. This voltage-current relation is also symbolized in Fig. \ref{seqence}. Furthermore, the corresponding Pinched Hysteresis Loops (PHLs) produced by the memristor model, as obtained from the numerical simulation, are shown in Fig. \ref{PHL}, highlighting the nonlinear memory-dependent behavior of the system.

\begin{figure}[ht!] 
    \centering
    \subfloat[$v_m=1, \;\omega=3.7$]{%
        \includegraphics[width=0.55\textwidth]{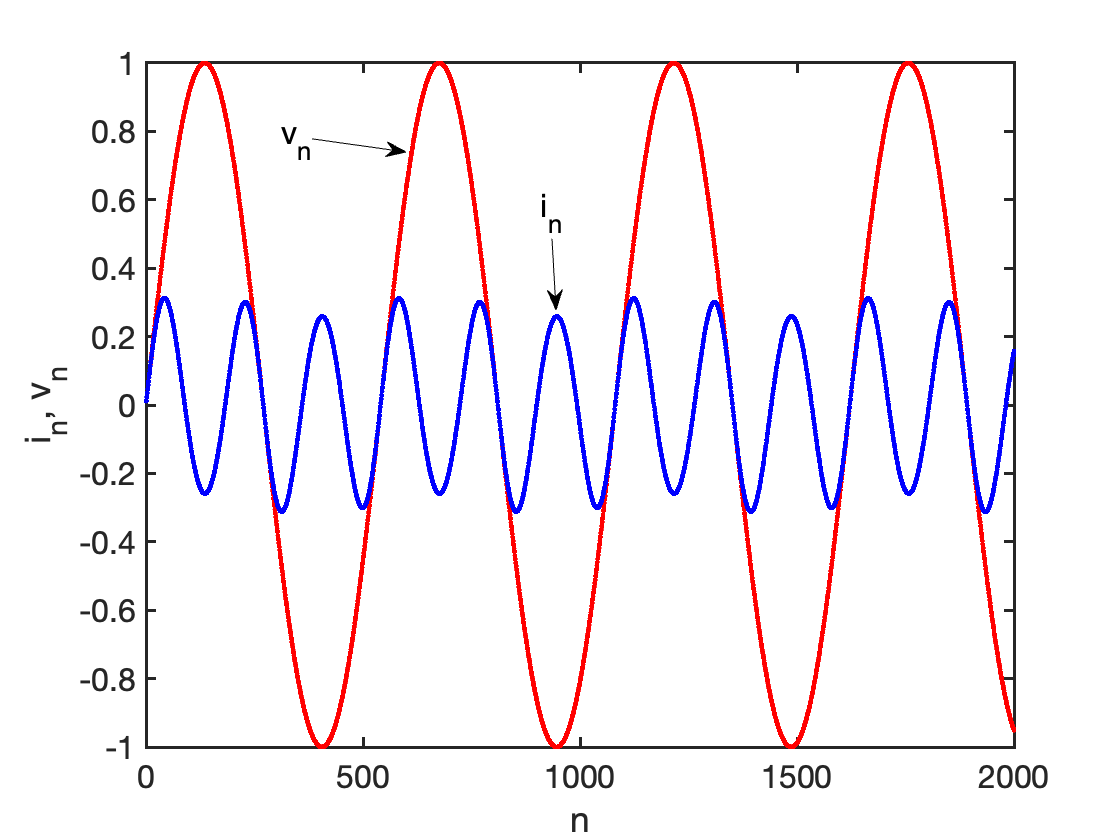}%
        }%
    %\hfill%
    \subfloat[$v_m=1.3, \;\omega=3.4$]{%
        \includegraphics[width=0.55\textwidth]{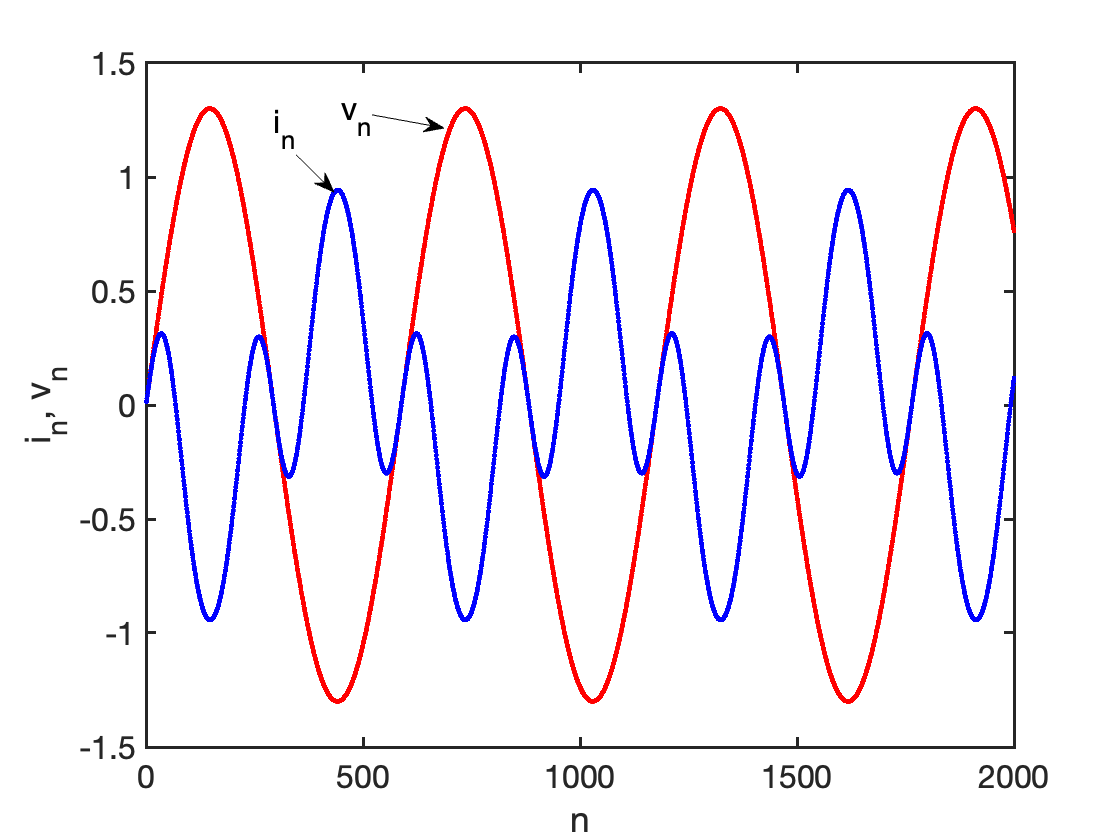}%
        }%
     \caption{Sequential diagram of the DM model. Parameters are considered as: $k_1=0.7, k_2=0.2, $ and $ \phi_0=0$.}\label{seqence}
\end{figure}

\begin{figure}[ht!] 
    \centering
    \subfloat[$v_m=1,\; \omega=3.7$]{%
        \includegraphics[width=0.55\textwidth]{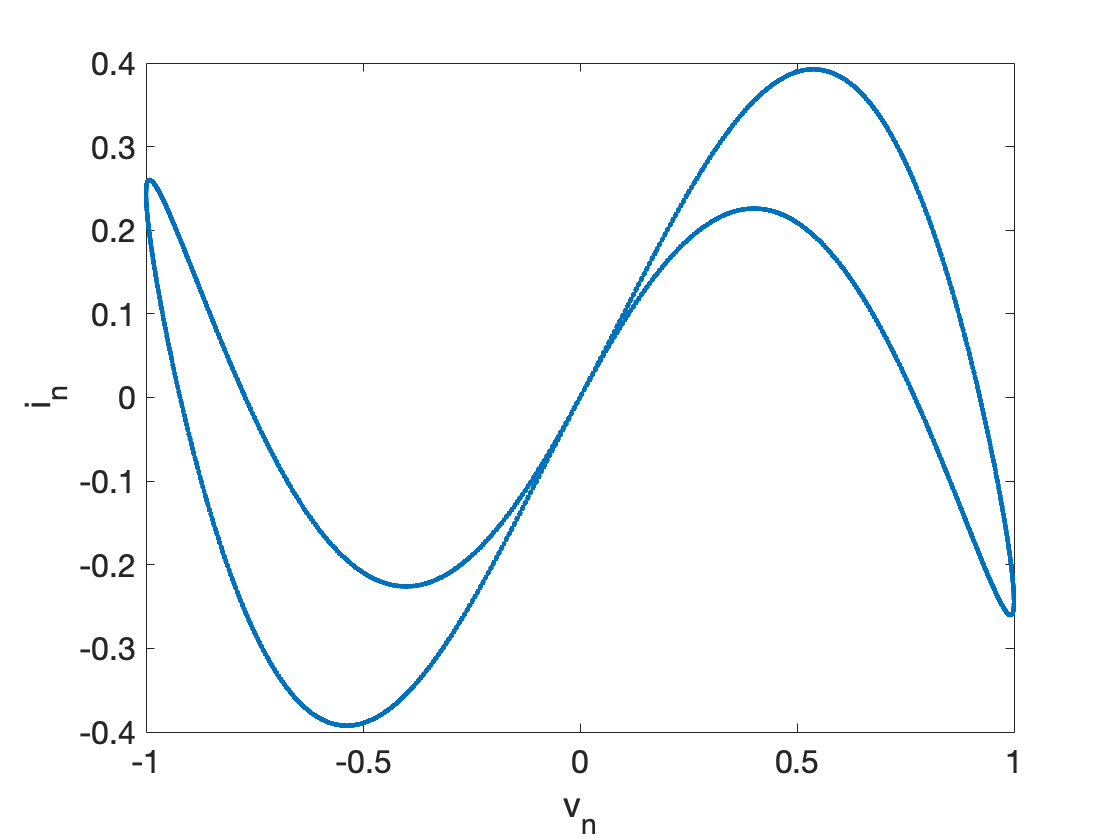}%
        }%
    %\hfill%
    \subfloat[ $v_m=1.3, \; \omega=3.4$]{%
        \includegraphics[width=0.55\textwidth]{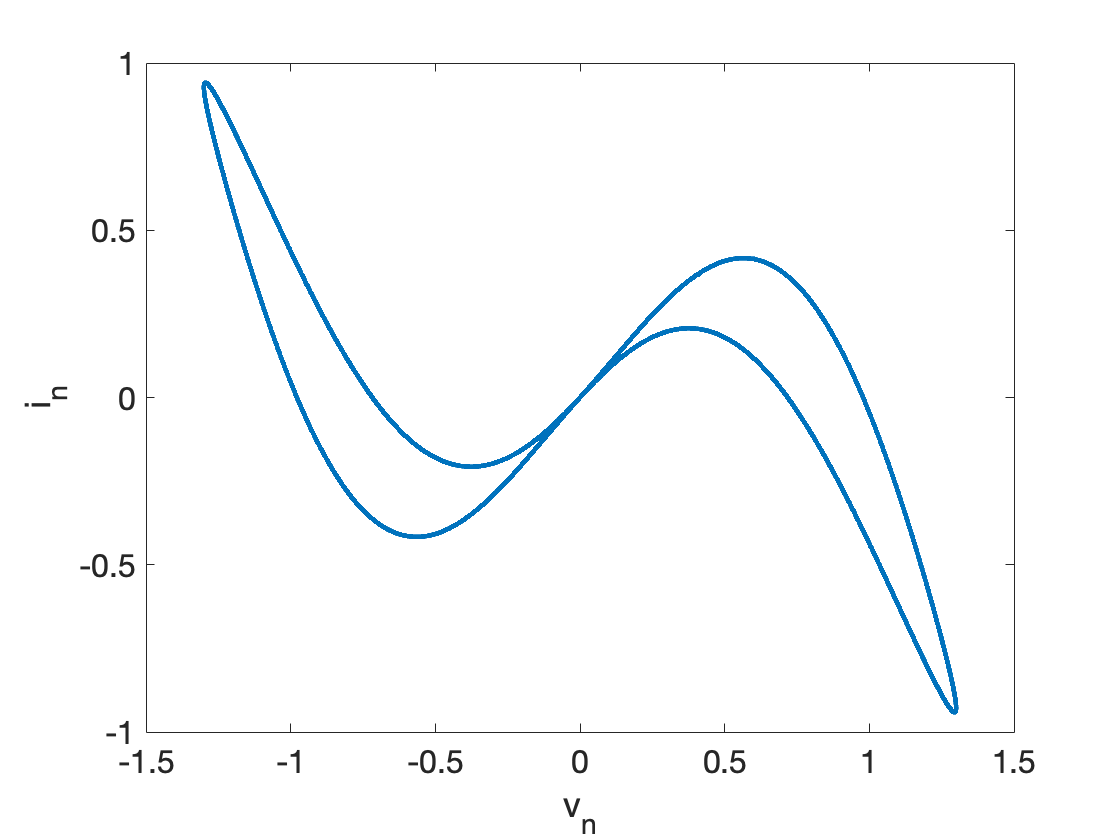}%
        }%
    \caption{Pinched Hysteresis Loops (PHLs) diagram of the DM model for the parameters: $k_1=0.7, k_2=0.2, $ and $ \phi_0=0$.}\label{PHL}
\end{figure}

% the area enclosed by the hysteresis loops increases as the amplitude $V_m$ increases."

As discussed in \cite{adhikari2019three}, the PHLs are the $v-i$ characteristic curves of the memristor, which depends on the different amplitudes and frequencies of the applied signal. When a sinusoidal voltage input $ v_n=v_m\;\sin ({\omega}n)$ is applied to the memristor model Eq. \eqref{1}, the resulting PHLs pass through the origin under different conditions, as illustrated in Fig. \ref{Pinch}. The presence of PHLs confirms that the system behaves like a memristor, i.e., it has both nonlinearity and memory. As observed in Fig. \ref{Pinch}(a), for a fixed frequency $\omega=3.7$, the area enclosed by the hysteresis loop curves increases as the amplitude $v_m$ increases. This means that a large $v_m$ causes greater deviation from the equilibrium, indicating that nonlinearity and memory effects become more pronounced with a stronger force. Moreover, when the amplitude is fixed at $v_m=1$, the area enclosed by the hysteresis loop curves decreases with the increase of frequency $\omega$, as shown in Fig. \ref{Pinch}(b). This indicates that at higher frequencies, the systems have less time to respond fully to input changes, leading to reduced dynamics lag and thus smaller hysteresis loops.

\begin{figure}[ht!] 
    \centering
    \subfloat[$\omega=3.7$]{%
        \includegraphics[width=0.55\textwidth]{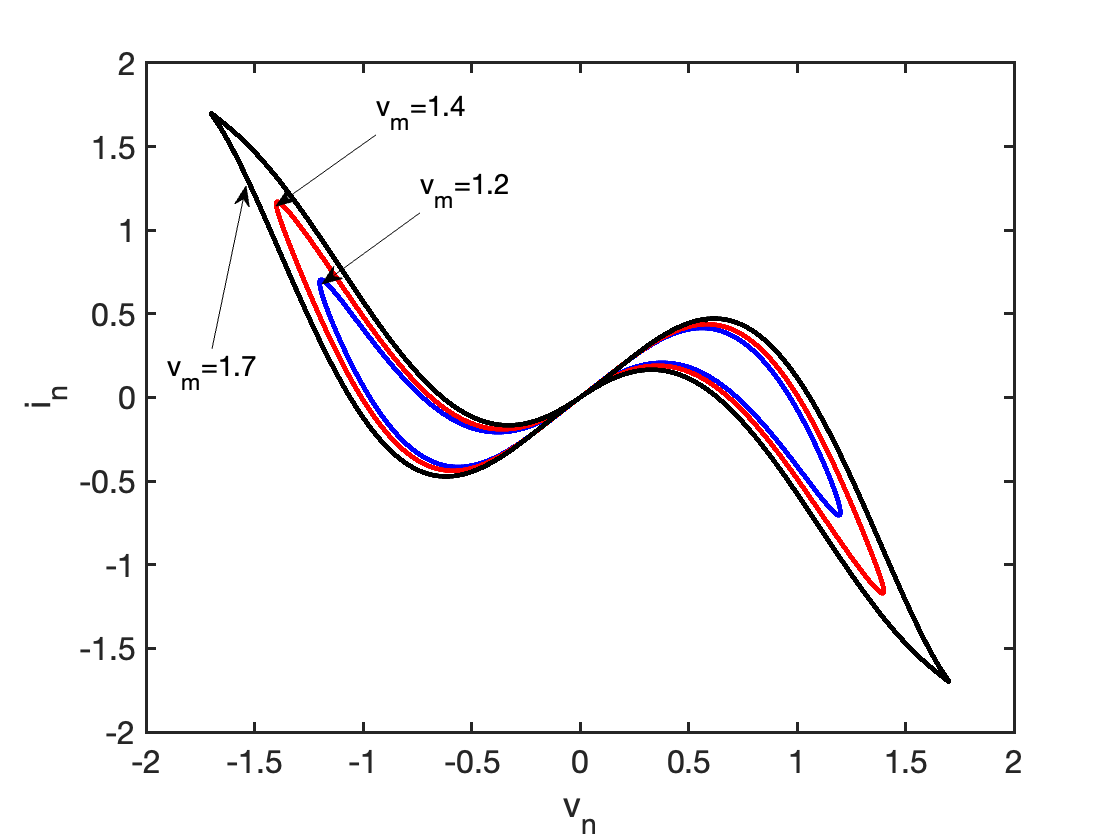}%
        }%
    %\hfill%
    \subfloat[$v_m=1$]{%
        \includegraphics[width=0.55\textwidth]{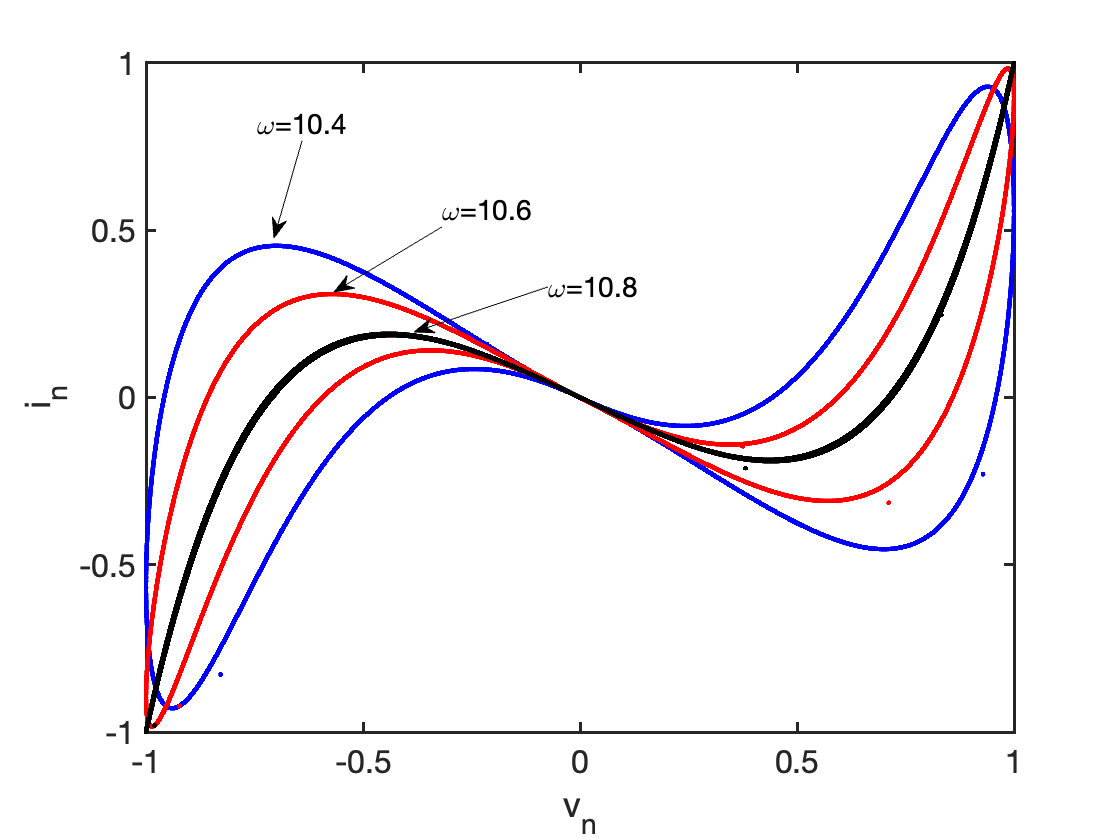}%
        }%
    \caption{The pinched hysteresis loops (PHLs) of the DM model. Parameters are considered as: $k_1=0.7, k_2=0.2, $ and $ \phi_0=0.$ }\label{Pinch}
\end{figure}

% \begin{figure}[ht!] 
%     \centering
%         \includegraphics[width=0.55\textwidth]{POP.png}%
%         \caption{POP of the discrete model.}
% \end{figure}

%%%%%%%%%%%%%%%%%%%%%%%%%%%%%%%%%%%%%%%%%%%%%%%%%%%%%%%%%%%%%%%%%%%%%%%%%%%%
\subsection{M-rChialvo Model } In \cite{chialvo1995generic}, Dante R. Chialvo introduces a 2-D discrete neuron model as follows:

\begin{equation}\label{2}
\left(
\begin{array}{c} 
x_{n+1}  \\ 
y_{n+1} 
\end{array} 
\right)
=
\left(
\begin{array}{c} 
x_n^2e^{(y_n-x_n)}+k_0 \\ 
ay_n-bx_n+c 
\end{array} 
\right) 
\end{equation}

% \begin{align}
%     \begin{split}
%     x_{n+1}&=x_n^2e^{(y_n-x_n)}+k_0,\\
%     y_{n+1}&=ay_n-bx_n+c.
%     \end{split}
% \end{align}

\noindent Here, the variable $x$ represents the membrane potential, while $y$ is the symbol of the recovery (slow) variable. The parameter $k_0$ serves either as a constant bias or a time-dependent additive perturbation. The time constant $a\in(0,1)$, activation dependence $b\in(0,1)$, and $c$ is the offset parameter. 

This model is commonly referred to as the original (or 2-D) Chialvo neuron model. Further, Chialvo treated the map's recovery variable $y$ as a parameter $r$ and therefore the reduced form of the original model as a 1-D model is as follows:

\begin{equation}\label{3}
    x_{n+1}=x_n^2e^{(r-x_n)}+k_0.
\end{equation}
\noindent where $r$ is the real-valued parameter.

% A non-ideal flux-control memristor possessing cosine memconductence \cite{xu2023dynamical} is use to depict the electromagnetic induction in Eq. \eqref{2} (i.e. reduces Chialvo model). In this case, a memristive electromagnetic induction current acts as the role of current stimulus, and the \textit{M-rChialvo} map is  then built as 

% , as discussed in \cite{},

 In this work, we employ a flux-control memristor in the form of cosine memconductance to represent electromagnetic induction into the reduced Chialvo model Eq.\eqref{3}. In this scenario, the memristive electromagnetic induction current serves as the external stimulus, leading to the formation of a novel neuron map as follows:

\begin{equation}\label{4} 
\left(
\begin{array}{c} 
x_{n+1}  \\ 
\phi_{n+1} 
\end{array} 
\right)
=
\left(
\begin{array}{c} 
{x_n}^2 e^{(r-x_n)}+k_0+kx_n \cos(\pi\phi_n) \\ 
k_1x_n-k_2\phi_n 
\end{array} 
\right) 
\equiv\mathcal{M}_{r,k}(x,\phi)
\end{equation}

\noindent This, we refer as the M-rChialvo neuron map (or symbolized with $\mathcal{M}_{r,k}(x,\phi)$), where variable $x$ denotes the membrane potential, while $\phi$ represents the memristor's internal flux state. The term $\cos(\pi\phi)$ symbolizes the periodic memconductance function, while $k$ denotes the coupling strength of the induction current, resulting in $k\;\cos(\pi\phi)$ characterizing the memristive electromagnetic induction current. Here, $k_0$ denotes a time-independent parameter. The parameters $k_1$ and $k_2$ characterize the influence of the memristor's internal state variable in relation to its input and internal dynamics. The term $k_1x$ signifies the variation in magnetic flux induced by the membrane potential, while $k_2\phi$ signifies the leakage flux.  A detailed structural diagram of this model, as described in Eq. \eqref{4}, is illustrated in Fig. \ref{map+mem}. 

\begin{figure}[ht!] 
    \centering
\subfloat{\includegraphics[width=0.8\textwidth,height=0.4\textwidth]{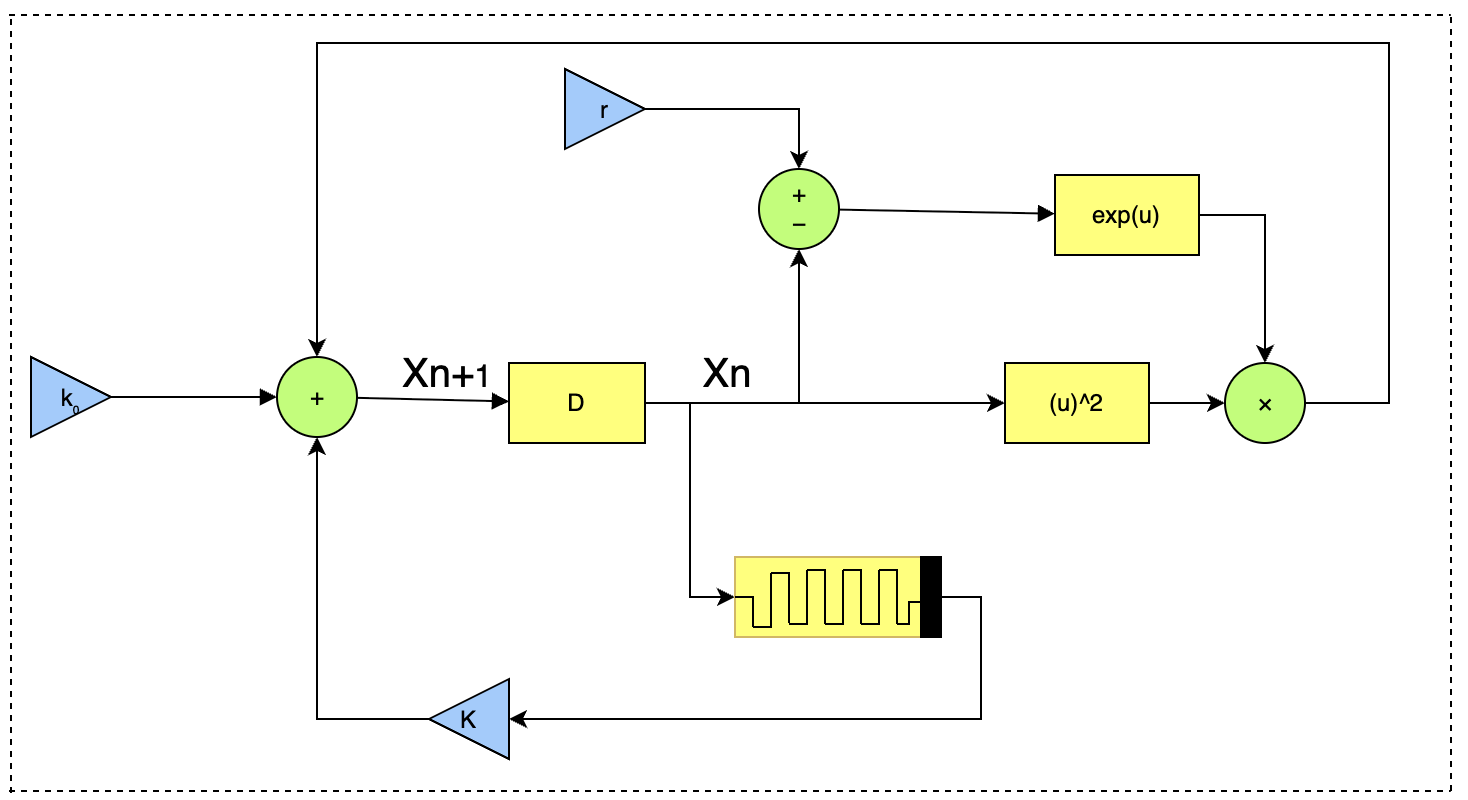}}%
        \caption{The structure diagram of the map $\mathcal{M}_{r,k}(x,\phi)$.}\label{map+mem}
\end{figure}

\section{Existence of fixed point and stability} We investigate the fixed points of the map $\mathcal{M}_{r,k}(x,\phi)$ and analyze their stability. Due to the presence of trigonometric and exponential terms, obtaining the explicit fixed points is analytically challenging. To address this, we derive conditions on the parameters that ensure the existence and uniqueness of a positive fixed point. Furthermore, we conduct a numerical study to explore further details of the fixed points. 

\subsection{Unique fixed point and stability analysis}\label{3.1}

This study aims to demonstrate the existence and uniqueness of the unique positive fixed point. For the convenience of our analysis, we introduce the following terminology, which will be used throughout this paper: let $N>1$ be a natural number, $M_1=N+2$, and $M_2=\frac{k_1}{1+k_2}M_1$.

\begin{theorem}
Let $\mathcal{M}_{r,k}(x,\phi)$ be the map, satisfying the condition $0<k_0<N$ for $N\in \mathbb{N}$, then the map exhibit a unique positive fixed point $(x^*,\phi^*)\in [0,M_1]*[0,M_2]$, whenever the parameter satisfies the following conditions: $k\in (-\frac{1}{M_1},\frac{1}{M_1})$, $0<k_1<1$, $k_2\geq4\pi$ and $r<0$.
\end{theorem}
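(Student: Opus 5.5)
The plan is to reduce the two-dimensional fixed-point problem to a scalar equation and then use the intermediate value theorem for existence and strict monotonicity for uniqueness.

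\textbf{Reduction.} At a fixed point of $\mathcal{M}_{r,k}$ the second component gives $\phi=k_1x-k_2\phi$, so $\phi^*=\frac{k_1}{1+k_2}x^*$. Hence $(x^*,\phi^*)$ is a fixed point if and only if $x^*$ is a zero of
\[
h(x)=x^2e^{r-x}+k_0+kx\cos\!\Big(\frac{\pi k_1 x}{1+k_2}\Big)-x .
\]
Once a unique zero $x^*\in(0,M_1)$ is found, the relation $\phi^*=\frac{k_1}{1+k_2}x^*$ places $\phi^*$ in $[0,M_2]$, since $M_2=\frac{k_1}{1+k_2}M_1$. So it suffices to work with $h$ on $[0,M_1]$.

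\textbf{Existence.} I would first check the signs of $h$ at the endpoints.
\begin{itemize}
\item At the left endpoint, $h(0)=k_0>0$.
\item At the right endpoint, use $\max_{x\ge0}x^2e^{-x}=4e^{-2}<1$ and $r<0$ to get $x^2e^{r-x}<1$. Also $|kx\cos(\cdot)|\le|k|M_1<1$ on $[0,M_1]$.
\item Together with $k_0<N$ these give $h(M_1)<1+N+1-M_1=0$.
\end{itemize}
The intermediate value theorem then yields a zero $x^*\in(0,M_1)$, which is positive.

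\textbf{Uniqueness.} I would show $h'(x)<0$ on $[0,M_1]$, i.e.\ that $|F'(x)|<1$ for $F=h+x$. Writing $\theta=\frac{\pi k_1 x}{1+k_2}$, we have
\[
F'(x)=(2x-x^2)e^{r-x}+k\cos\theta-\frac{k\pi k_1 x}{1+k_2}\sin\theta .
\]
The three terms are bounded separately.
\begin{itemize}
\item \emph{First term.} The function $(2x-x^2)e^{-x}$ has critical points $x=2\pm\sqrt2$. Its values there are about $0.46$ and $-0.16$, and it tends to $0$ as $x\to\infty$. So its absolute value is below $0.5$, and $r<0$ only shrinks it.
\item \emph{Second term.} Since $N\ge2$ gives $M_1\ge4$, we get $|k\cos\theta|<1/M_1\le 1/4$.
\item \emph{Third term.} Using $|k|x\le|k|M_1<1$, $k_1<1$ and $k_2\ge4\pi$, its absolute value is below $\frac{\pi}{1+4\pi}<\frac14$.
\end{itemize}
Summing, $|F'(x)|<0.5+0.25+0.25=1$. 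Hence $h'(x)=F'(x)-1<0$, so $h$ is strictly decreasing and the zero $x^*$ is unique.

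The main obstacle is this derivative estimate. The hypotheses $k_2\ge4\pi$ and $|k|<1/M_1$ are tailored exactly so that the oscillatory cosine contributions stay below $1/4$ each. The remaining care is the sharp bound on $(2x-x^2)e^{-x}$, which I would carry out by elementary calculus at the critical points $2\pm\sqrt2$. If a cleaner argument is preferred, the same estimates show that $F$ is a contraction on $[0,M_1]$, and Banach's theorem gives the same conclusion.
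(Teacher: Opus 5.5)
Your proof is correct and follows the paper's own route. You reduce to the scalar equation via $\phi^*=\frac{k_1}{1+k_2}x^*$, get existence from $h(0)=k_0>0$, $h(M_1)<0$ and the intermediate value theorem, and get uniqueness from the three-term derivative bound $\frac12+\frac14+\frac14$. Your write-up also supplies details the paper only asserts: the estimate behind $h(M_1)<0$, the critical points $2\pm\sqrt2$, and the actual need for $N\ge2$ and $k_2\ge4\pi$. (The paper's proof text writes $k_2\ge 2\pi$, which would not give the $\frac14$ bound.)

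One small caution about your closing aside. $F$ need not map $[0,M_1]$ into itself: for small $k_0$ and $k<0$ one can have $F(M_1)<0$. So the contraction estimate gives uniqueness but not existence via Banach's theorem, and you should keep the intermediate value argument for existence.
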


% \vspace{0.2cm}
\begin{proof}
     The fixed points of the map $\mathcal{M}_{r,k}(x,\phi)$ are obtained by solving the following equations simultaneously.
\begin{equation}\label{5a}
        x^2e^{(r-x)}+k_0+kx \cos(\pi\phi)=x,
\end{equation}
\noindent and
  \begin{equation}\label{5b}
         k_1x-k_2\phi=\phi.
  \end{equation}

\noindent From Eq.\,\eqref{5b}, we write
\begin{equation}\label{6}
    \phi=\frac{k_1x}{1+k_2}.
\end{equation}

\noindent Use the value of $\phi$ in Eq.\eqref{5a}, we obtain
\begin{equation}\label{7}
    x^2e^{(r-x)}+k_
0+kx \cos \Big\{\pi\left(\frac{k_1x}{1+k_2}\right) \Big\}=x.
\end{equation}

\noindent Let us rewrite the above expression in the form of a function
\begin{equation}\label{8}
   F(x)= f(x)-x, \hspace{0.4cm}  where \hspace{0.4cm} f(x)=x^2e^{(r-x)}+k_
0+kx \cos \Big\{\pi\left(\frac{k_1x}{1+k_2}\right) \Big\}.
\end{equation}

\noindent Note that the $x$-component of fixed points of the map $\mathcal{M}_{r,k}(x,\phi)$ is same as the roots of above function $F(x)$. Also, the $\phi$-component can be determined by using Eq. \eqref{5b}.

\noindent Assume that  $0<k_0<N$, then it follows that \; $F(0)=k_0 \implies F(0)>0$.

\noindent Further assume that the condition  $r<0$, $M_1=N+2$, $k\in (-\frac{1}{M_1},\frac{1}{M_1})$, $0<k_1<1$, $k_2\geq2\pi$ holds, then we obtain that \; $F(M_1)<0$.

\noindent Therefore, $F(x)$ has at least one positive real root in the interval $[0,M_1]$. Further, to establish the uniqueness of the positive fixed point, we calculate

$F'(x)=(2-x)xe^{r-x}+k \cos\left(\frac{\pi{k_1}x}{1+k_2}\right)-\frac{\pi{k}xk_1}{1+k_2} \sin\left(\frac{k_1{\pi}x}{1+k_2}\right)-1$

\noindent Since $(2-x)xe^{r-x}<\frac{1}{2}$, $-\frac{1}{M_1}<k \cos\left(\frac{\pi{k_1}x}{1+k_2}\right)<\frac{1}{M_1}$,\\ and \;
$\frac{1}{4}<-\frac{\pi{k}xk_1}{1+k_2} \sin\left(\frac{k_1{\pi}x}{1+k_2}\right)<\frac{1}{4}$. 

\vspace{0.2cm}
\noindent So, we get $F'(x)<0$,  for all $x\in[0,M_1]$.

\noindent Hence the map $\mathcal{M}_{r,k}(x,\phi)$ has a unique positive fixed point in $[0,M_1]*[0,M_2]$.
\end{proof}

\vspace{0.2cm}

\noindent We now proceed to analyze the stability behavior of this positive fixed point $(x^*,\phi^*)$. Let J denote the Jacobian matrix of the map $\mathcal{M}_{r,k}(x,\phi)$
calculated at the fixed point $(x^*,\phi^*)$ and it is given by:

\[
J(x^*,\phi^*) = 
\begin{bmatrix}
(2-x^{*})x^{*}e^{r-x^{*}}+k \cos(\pi\phi^*) \hspace{1cm}&      -k{\pi}x  \sin(\pi\phi^*) \\
k_1 & -k_2
\end{bmatrix}
\]

\noindent Moreover, the characteristic polynomial of the Jacobian matrix J calculated at the fixed point $(x^*,\phi^*)$ is expressed as:  

  \begin{equation}\label{char_eqn}
   P(\lambda)= \lambda^2-p(x^*,\phi^*)\lambda+q(x^*,\phi^*),
\end{equation}

\noindent where

 $p(x^*,\phi^*)= (2-x^*)\gamma_1+k \cos(\pi\phi^*)-k_2$,
 
 $q(x^*,\phi^*)= -k_2(2-x^*)\gamma_1+k \cos(\pi\phi^*)+k{k_1}\gamma_2$,
 
 $\gamma_1 := x^*e^{r-x^*}$, and $\gamma_2 := {\pi}{x^*} \sin({\pi}{\phi^*}).$

% \noindent  the fixed point $(x^*,\phi^*)$ is classified as a \textit{sink} if $\lvert \lambda_{1,2} \rvert <1$, in which it is locally asymptotically stable. It is called a \textit{source} or repeller if $\lvert \lambda_{1,2} \rvert >1$, implying that the source is always unstable. If one eigenvalue lies inside the unit circle and the other lies outside. i.e., $\lvert \lambda_1 \rvert <1$ and $\lvert \lambda_2 \rvert >1$, then the fixed point is called a \textit{saddle}, which is also unstable. Finally, the fixed point is known as non-hyperbolic if either of the eigenvalues has a modulus equal to one, i.e., $\lvert \lambda_1 \rvert = 1$ or $\lvert \lambda_2 \rvert = 1$. 

\noindent

\noindent Let $\lambda_1$ and $\lambda_2$ be the eigenvalues of matrix J calculated at the unique positive fixed point $(x^*,\phi^*)$ of the map $\mathcal{M}_{r,k}(x,\phi)$, i.e., the roots of the characteristic Eq. \eqref{char_eqn}. Then, to analyze the stability of the unique positive fixed point of the map $\mathcal{M}_{r,k}(x,\phi)$, we use the following known Lemma. The result can be easily obtained from the relationship between the roots and the coefficients of a characteristic equation.

\vspace{0.5cm}
\begin{lemma}
        Let $P(\lambda)=\lambda^2-S\lambda+T$, and suppose that $P(1)>0$ with $\lambda_1, \lambda_2$ are the roots of the equation $P(\lambda)=0$. Then the following results follow:    
 \begin{enumerate}\label{lemma_stability}
     \item $ \lvert \lambda_{1,2} \rvert <1$ if and only if $P(-1)>0$ and $T<1$.
     \item $\lvert \lambda_1 \rvert <1$ and $\lvert \lambda_2 \rvert >1$ if and only if $P(-1)<0$
     \item $\lvert \lambda_{1,2} \rvert >1$ if and only if $P(-1)>0$ and $T>1$.
     \item $ \lambda_1  =-1$ and $\lvert \lambda_2 \rvert \neq1$ if and only if $P(-1)=0$ and $T \neq0,2$.
     \item $\lambda_1$ and $\lambda_2$ are complex and $\lvert\lambda_{1,2}\rvert$=1 if and only if $S{^2}-4T<0$ and $T=1$.
 \end{enumerate}   
\end{lemma}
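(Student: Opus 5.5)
The plan is to use the factorization $P(\lambda)=(\lambda-\lambda_1)(\lambda-\lambda_2)$, so that $S=\lambda_1+\lambda_2$ and $T=\lambda_1\lambda_2$ with real $S,T$. Then split into two cases: real roots and a complex-conjugate pair. The standing hypothesis $P(1)>0$ says $1-S+T>0$. The quantity $P(-1)=1+S+T$ detects a root crossing $-1$.

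\emph{Complex case.} If $S^2-4T<0$, then $\lambda_2=\bar\lambda_1$ and $|\lambda_1|^2=|\lambda_2|^2=T>0$. Moreover $P(\pm1)=|{\pm}1-\lambda_1|^2>0$ automatically. Hence $|\lambda_{1,2}|<1$, $=1$, or $>1$ exactly when $T<1$, $T=1$, or $T>1$. This settles the complex parts of items 1 and 3, and all of item 5. The converse of item 5 follows because a real root of modulus one would be $\pm1$. But $P(1)>0$ excludes $1$. If $-1$ were a root, unit modulus of the other root together with $P(1)>0$ would force the other root to be $-1$ too, giving $S^2-4T=0$, which is not strictly negative.

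\emph{Real case.} Here $P$ is an upward parabola. I will use the sign information of $P$ at $\pm1$.
\begin{itemize}
\item Both roots lie in $(-1,1)$ iff $P(1)>0$, $P(-1)>0$, and the vertex $S/2\in(-1,1)$. Given $P(\pm1)>0$, the vertex condition is equivalent to $T<1$. Indeed, if $T<1$ then $|S|<1+T<2$. Conversely, if both roots are in $(-1,1)$ then $|T|<1$.
\item If $P(-1)>0$ and $T>1$, the roots have the same sign and product greater than $1$. Since neither root crosses $\pm1$ in a way that would put one root inside, both lie outside $[-1,1]$. Concretely, $|S|>2$ is forced, since $|S|\le 2$ with $T>1$ would give $S^2-4T<0$. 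Both roots then lie on one side beyond $\pm1$.
\item $P(-1)<0$ holds iff $-1$ lies strictly between the roots. Since $P(1)>0$, the root greater than $-1$ is either below $1$, or both roots exceed $1$. The latter is impossible because then $P(-1)>0$. So exactly one root has modulus less than $1$, giving item 2. Conversely, if one root is inside and the other outside, the outside root cannot exceed $1$: with $P(1)>0$ that would need both roots beyond $1$, contradicting the inside root. So the outside root is below $-1$ and $P(-1)<0$.
\item $P(-1)=0$ means $\lambda_1=-1$, and then $\lambda_2=-T$. The condition $|\lambda_2|\ne1$ is $T\ne\pm1$. Note $T=-1$ gives $\lambda_2=1$, which is excluded by $P(1)>0$, so the condition reduces to $T\ne1$. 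The stated condition ``$T\neq0,2$'' does not match this. I expect to need to reinterpret it, e.g.\ as the standard nondegeneracy $S\neq0,2$, since $S=-1-T$ gives $S\ne0\Leftrightarrow T\ne-1$ and $S\ne-2\Leftrightarrow T\ne1$.
\end{itemize}

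The main obstacle is bookkeeping: making each ``if and only if'' genuinely two-directional, and reconciling item 4's stated condition. I would prove the version $T\neq 1$ and note the equivalent forms in terms of $S$.
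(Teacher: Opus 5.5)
Your proposal is correct and follows the route the paper indicates. The paper gives no proof beyond saying the lemma follows from the root--coefficient (Vieta) relations, and your split into a complex-conjugate case ($|\lambda_{1,2}|^2=T$, with $P(\pm1)>0$ automatic) and a real case (locating $\pm1$ relative to the roots through the signs of $P(\pm1)$ and the vertex $S/2$) supplies exactly those details. You are also right that item 4 is misprinted: $P(\lambda)=\lambda(\lambda+1)$ (with $T=0$) and $P(\lambda)=(\lambda+1)^2$ (with $T=1$) break the two directions, and under $P(1)>0$ the correct condition is $T\neq1$, i.e.\ $S\neq-2$ (the usual ``$B\neq0,2$'' for $\lambda^2+B\lambda+C$ reads $S\neq0,-2$ in this sign convention, not $S\neq0,2$ as you wrote, and $S\neq0$ is automatic here).
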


\noindent The following proposition shows the local dynamics of the unique positive fixed point $(x^*,\phi^*)$ from Lemma-\ref{lemma_stability}. 

\vspace{0.2cm}
\begin{proposition}
Consider the map $\mathcal{M}_{r,k}(x,\phi)$ possessing the fixed point $(x^*,\phi^*)$, then we characterized the following natures:
    \begin{enumerate}
        \item The unique fixed point is a repeller if and only if
        
        $\lvert -k_2[(2-x^{*})x^{*}e^{r-x^{*}}+k \cos(\pi\phi^*)]+{\pi}k{k_1}x \sin({\pi}\phi^*) \rvert >1$,
 
  % \vspace{0.2cm}       
\noindent and 
% \vspace{0.2cm} 
  
        $\lvert (2-x^{*})x^{*}e^{r-x^{*}}+k \cos(\pi\phi^*)-k_2 \rvert <\lvert 1-k_2[(2-x^{*})x^{*}e^{r-x^{*}}+k \cos(\pi\phi^*)]+{\pi}k{k_1}x \sin({\pi}\phi^*) \rvert $.

        % \vspace{0.2cm}
        \item The unique positive fixed point is a saddle if and only if 

        $[(2-x^{*})x^{*}e^{r-x^{*}}+k \cos(\pi\phi^*)-k_2]^2>-4k_2[(2-x^{*})x^{*}e^{r-x^{*}}+k \cos(\pi\phi^*)]+4{\pi}k{k_1}x \sin({\pi}\phi^*)$,
        
% \vspace{0.2cm} 
      \noindent  and
% \vspace{0.2cm}

       $\lvert (2-x^{*})x^{*}e^{r-x^{*}}+k \cos(\pi\phi^*)-k_2 \rvert > \lvert 1-k_2[(2-x^{*})x^{*}e^{r-x^{*}}+k \cos(\pi\phi^*)]+{\pi}k{k_1}x \sin({\pi}\phi^*) \rvert $.
% \vspace{0.2cm}        
        \item The  unique fixed point is a non-hyperbolic if and only if 
\begin{equation}\label{saddle_c1}
\begin{split}
    \lvert (2-x^{*})x^{*}e^{r-x^{*}}+k \cos(\pi\phi^*)-k_2 \rvert =\lvert  1-k_2[(2-x^{*})x^{*}e^{r-x^{*}}\\
    +k\;\cos(\pi\phi^*)]+{\pi}k{k_1}x \sin({\pi}\phi^*)\rvert. 
    \end{split}
\end{equation}

% \vspace{0.2cm} 
\noindent        or
        \begin{equation}\label{saddle_c2}
        \begin{split}
              k_2(x^*-2)x^*e^{r-x^*}-k_2k \cos(\pi\phi^*)+k_1k\pi\;\sin(\pi\phi)=1,\vspace{0.3cm} and \\
         \lvert (2-x^*)x^*e^{r-x^*}+k \cos(\pi\phi^*)-k_2 \rvert \leq 2.
         \end{split}
\end{equation} 
        
    \end{enumerate}
\end{proposition}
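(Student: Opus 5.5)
The plan is to reduce everything to the classical trace–determinant (Jury) conditions for a $2\times 2$ real matrix. Write $A:=(2-x^*)x^*e^{r-x^*}+k\cos(\pi\phi^*)$ for the $(1,1)$ entry of $J(x^*,\phi^*)$ and $B:=-k\pi x^*\sin(\pi\phi^*)$ for the $(1,2)$ entry. Then $S=\operatorname{tr}J=A-k_2$ and $T=\det J=-k_2A-k_1B=-k_2A+\pi k k_1x^*\sin(\pi\phi^*)$.

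These are exactly the expressions appearing in the statement. Note that the paper's displayed $q$ in \eqref{char_eqn} seems to omit the factor $k_2$ on the cosine term; I would recompute the determinant directly and use the form above, which matches the proposition. The characteristic polynomial is $P(\lambda)=\lambda^2-S\lambda+T$, so
\[
P(1)=1-S+T,\qquad P(-1)=1+S+T .
\]
The condition "$P(1)>0$ and $P(-1)>0$" is equivalent to $|S|<1+T$. The condition "$P(1)P(-1)<0$" is equivalent to $|S|>|1+T|$.

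The three parts are then handled by Lemma \ref{lemma_stability}.

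\textbf{Repeller.} By item 3 of the lemma, $|\lambda_{1,2}|>1$ iff $P(\pm1)>0$ and $T>1$. I will rewrite this as $|T|>1$ together with $|S|<|1+T|$. Given $T>1$, we have $1+T>0$, so $|S|<1+T=|1+T|$ is exactly $P(\pm1)>0$. For the converse, I must exclude $T<-1$. In that case $P(1)P(-1)=(1+T)^2-S^2$, and the trace bound $|S|<|1+T|$ would force this product to be positive. I would handle this either by noting that it gives roots with product $T<-1$ and both $P(\pm1)$ of equal sign, or by checking that it contradicts realness. This is the delicate point: one really needs $T>1$, and the statement's absolute value must be interpreted accordingly, possibly as a sign convention.

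\textbf{Saddle.} Item 2 gives the saddle condition $P(1)P(-1)<0$, i.e.\ $|S|>|1+T|$. Real, distinct roots are then automatic. This is stated explicitly as the discriminant condition $S^2>4T$, which follows since $S^2>(1+T)^2\ge 4T$.

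\textbf{Non-hyperbolic.} Either an eigenvalue equals $\pm1$, which happens iff $P(1)P(-1)=0$, i.e.\ $|S|=|1+T|$, giving \eqref{saddle_c1}. Or there is a complex pair on the unit circle, which by item 5 means $T=1$ and $S^2<4T$. Allowing the boundary cases gives $|S|\le 2$, which is \eqref{saddle_c2}.

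Most of this is bookkeeping. The main obstacle is making the "if and only if" claims exact, since the lemma assumes $P(1)>0$ and the proposition is phrased with absolute values. For each case I would check, by splitting into the sign cases $P(1)>0$ and $P(1)<0$, that the absolute-value formulation matches precisely the lemma's conditions. The degenerate case $T<-1$ in the repeller statement is where I expect trouble, and I would settle it by directly computing the roots $\tfrac12\bigl(S\pm\sqrt{S^2-4T}\bigr)$.
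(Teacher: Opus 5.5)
Your reduction to $S=A-k_2$ and $T=-k_2A+\pi k k_1x^*\sin(\pi\phi^*)$ is correct. You are also right that the paper's displayed $q$ drops the factor $k_2$ on $k\cos(\pi\phi^*)$. The saddle and non-hyperbolic parts are essentially fine.

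\textbf{The gap is in the repeller part.} Item 3 of the lemma holds only under the lemma's standing hypothesis $P(1)>0$. The paper itself only says the proposition follows from the lemma, so this case needs your own argument. You use item 3 as an unconditional characterization, and then try to show that $|T|>1$, $|S|<|1+T|$ forces $T>1$. That converse is false, and neither of your proposed exclusions can work:
\begin{itemize}
\item For $T<0$ the discriminant $S^2-4T$ is positive, so there is no clash with realness.
\item Having $P(1)<0$ and $P(-1)<0$ together is perfectly consistent; it only means $\lambda_1<-1<1<\lambda_2$.
\end{itemize}
For example, $S=0$, $T=-4$ gives eigenvalues $\pm 2$: a genuine repeller with $T<-1$ and $|S|<|1+T|$. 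So the $|T|>1$ in the statement is not a sign convention to be reinterpreted. It is exactly what captures this second family of repellers, and replacing it by $T>1$ would make the ``only if'' direction false.

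\textbf{What is missing is the case $P(1)\le 0$,} which the lemma does not cover.
\begin{itemize}
\item If $P(1)=0$, then $1$ is an eigenvalue and the point is not a repeller.
\item If $P(1)<0$, the roots are real with $\lambda_1<1<\lambda_2$. Both lie outside the closed unit disc iff $\lambda_1<-1$, i.e.\ $P(-1)<0$. Together with $P(1)<0$ this is $|S|<-(1+T)$, which forces $T<-1$.
\end{itemize}
Combined with item 3 for $P(1)>0$ (where $T>1$ and $|S|<1+T$), this yields precisely $|T|>1$ and $|S|<|1+T|$.

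Alternatively, apply the Jury conditions to the reciprocal polynomial $\mu^2-(S/T)\mu+1/T$. The requirements $|1/T|<1$ and $|S/T|<1+1/T$ multiply out to the stated condition in both sign cases of $T$.

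The same $P(1)<0$ bookkeeping is what justifies your reading of item 2 as $P(1)P(-1)<0$. When $P(1)<0$ one has $\lambda_2>1$ automatically, and $\lambda_1\in(-1,1)$ iff $P(-1)>0$.
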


\noindent The following result shows the necessary and sufficient conditions under which the unique positive fixed point of the map $\mathcal{M}_{r,k}(x,\phi)$ is locally asymptotically stable.

% \vspace{0.2cm}
\begin{proposition}
Let $\mathcal{M}_{r,k}(x,\phi)$ be the map, and both the Eqs. \eqref{saddle_c1}, \eqref{saddle_c2} are not satisfied together. Then the unique positive fixed point $(x^*,\phi^*)$  of the map is locally asymptotically stable if and only if 
   \begin{equation}
   \begin{split}   
       \lvert (2-x^{*})x^{*}e^{r-x^{*}}+k \cos(\pi\phi^*)-k_2 \rvert <1-k_2[(2-x^{*})x^{*}e^{r-x^{*}} +k \cos(\pi\phi^*)]\\+{\pi}k{k_1}x\;\sin({\pi}\phi^*)<2.
    \end{split}
   \end{equation}    
\end{proposition}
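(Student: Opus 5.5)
The plan is to reduce the statement to the classical Jury (Schur–Cohn) conditions for the quadratic characteristic polynomial \eqref{char_eqn}. To keep the notation light, write $S=p(x^*,\phi^*)$ and $T=q(x^*,\phi^*)$, so that $P(\lambda)=\lambda^2-S\lambda+T$. The first step is to compute $S$ and $T$ directly from $J(x^*,\phi^*)$ as its trace and determinant. With $A:=(2-x^*)x^*e^{r-x^*}+k\cos(\pi\phi^*)$, this gives
\[
S=A-k_2,\qquad T=-k_2A+\pi k k_1 x^*\sin(\pi\phi^*).
\]
The inequality in the statement then reads exactly $\lvert S\rvert<1+T<2$. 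I will use the determinant in this form, which is what the stated inequality uses, rather than the displayed formula for $q$.

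The core step is the elementary equivalence
\[
\lvert\lambda_{1,2}\rvert<1\iff \lvert S\rvert<1+T<2,
\]
which I will prove in both directions.

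\emph{Necessity.} Assume both roots lie in the open unit disk. Then $T=\lambda_1\lambda_2$ satisfies $\lvert T\rvert<1$, which gives $1+T<2$. Moreover $P(\pm1)>0$: for complex conjugate roots this holds because $P$ has no real zeros, and for real roots because both roots lie in $(-1,1)$ and the leading coefficient is positive. Since $P(1)=1-S+T$ and $P(-1)=1+S+T$, these give $1+T>\pm S$, that is, $\lvert S\rvert<1+T$.

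\emph{Sufficiency.} Assume $\lvert S\rvert<1+T<2$. Then $P(1)>0$, $P(-1)>0$ and $T<1$, so part 1 of Lemma \ref{lemma_stability} applies and yields $\lvert\lambda_{1,2}\rvert<1$. It is worth checking this lemma case directly, since it is the only place the argument relies on it. If the roots are complex, then $\lvert\lambda\rvert^2=T<1$. If the roots are real, $P(\pm1)>0$ means $\pm1$ both lie outside the interval between the roots or both lie inside it. The second alternative would force $T=\lambda_1\lambda_2<-1$, contradicting $\lvert S\rvert<1+T$, which requires $T>-1$. So both roots lie on the same side of $1$ and of $-1$. Since their product satisfies $\lvert T\rvert<1$, they cannot both lie beyond $1$ or both below $-1$, and hence both lie in $(-1,1)$.

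Finally, local asymptotic stability follows from linearization: when both eigenvalues lie strictly inside the unit circle, the fixed point is a hyperbolic sink. The hypothesis that Eqs. \eqref{saddle_c1} and \eqref{saddle_c2} are not satisfied excludes the non-hyperbolic cases $\lvert S\rvert=1+T$ and $T=1$ with $\lvert S\rvert\le 2$. For the converse, if the fixed point is a stable hyperbolic point, then its eigenvalues must lie inside the unit circle, and the necessity argument above applies.

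The main obstacle I expect is bookkeeping rather than analysis. The paper's displayed $q$ and some of its conditions contain typos: a missing factor $-k_2$ on the $k\cos$ term, and $x$ written in place of $x^*$. I therefore have to recompute the determinant carefully so that the proved equivalence matches the inequality as stated. The other delicate point is the real-root case in the sufficiency direction, where the sign argument above is needed to rule out roots straddling $\pm1$.
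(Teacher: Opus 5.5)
Your proposal is correct and follows the paper's implicit route. The proposition is part 1 of Lemma~\ref{lemma_stability}, i.e.\ the Jury conditions $\lvert S\rvert<1+T<2$ applied to the trace and determinant of $J(x^*,\phi^*)$, with the hypothesis on \eqref{saddle_c1}--\eqref{saddle_c2} removing the non-hyperbolic cases; your recomputed determinant $T=-k_2A+\pi k k_1x^*\sin(\pi\phi^*)$ correctly repairs the typo in the displayed $q$.

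One harmless slip: for real roots, $P(\pm1)>0$ already forces $\pm1\notin[\lambda_1,\lambda_2]$, since roots straddling $\pm1$ would give $P(\pm1)<0$. So the ``both inside'' alternative never arises, and excluding it via $T<-1$ is redundant rather than necessary.
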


%%%%%----------------------------------

\subsection{Numerical stability analysis}

After the examination of a unique positive fixed point of the map $\mathcal{M}_{r,k}(x,\phi)$, a natural next step is to use the numerical simulation to determine whether we can further explore more about fixed points and their stability behavior. To address this, we use numerical simulations to compute the fixed points by utilizing the earlier established result that the $x$-component of the fixed points coincides with the roots of Eq. \eqref{8}. Further, use the earlier subsection (\ref{3.1}) condition on eigenvalues to determine the corresponding stability. This information on fixed points and their stability behavior with respect to $r$ is plotted in Fig. \ref{Fig3}. The blue (thick) curve represents the stable fixed point (i.e., eigenvalues lie within the unit disc), the red (thin) curve represents the saddle fixed points (i.e., one eigenvalue lies within the unit disc and one lies outside the unit disc), and the black (dash) curve represents the unstable fixed points (i.e., both eigenvalues lie outside the unit disc). In this analysis of the fixed points and their stability, the map $\mathcal{M}_{r,k}(x,\phi)$ exhibits dynamically rich behavior for the chosen set of parameter values as shown in Fig. \ref{Fig3}. By ``dynamically rich" we refer to the presence of the maximum number of fixed points accompanied by the variety of stability behaviors observed.

\begin{figure}[h]
   \subfloat[$x$ component of the fixed point]
      {\includegraphics[width=.55\textwidth]{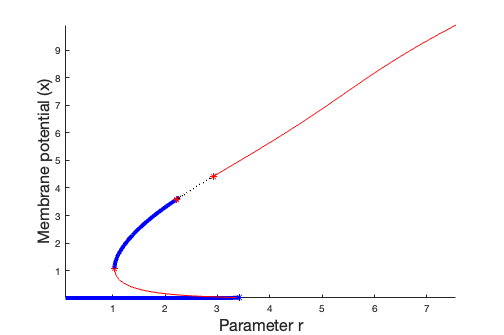}}
~
   \subfloat[$\phi$ component of the fixed point]
      {\includegraphics[width=.55\textwidth]{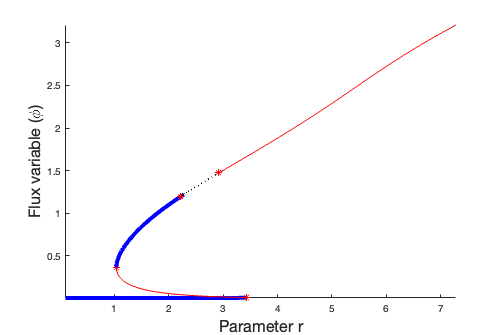}}

   \caption{ Illustrates the number and stability characteristics of fixed points for the map $\mathcal{M}_{r,k}(x,\phi)$ w.r.to the parameter $r$. Fixed points are color-coded: blue (thick) denotes asymptotically stable points, red (thin) indicates saddle points, and black (dash) represents unstable points. Parameters are considered as: $k_0=0.01, k_1=0.5, k_2=0.5, $ and $ k=-0.1$.}\label{Fig3}
\end{figure}

\noindent To validate the findings in Fig. \ref{Fig3}, we use the graphical approach to verify the number of fixed points for some discrete values of parameter $r$, as shown in Appendix Fig. \ref{Fig4}. Furthermore, we also calculated the fixed points and their stability behavior numerically as shown in Table \ref{Table:2}, which validates the information depicted in Fig. \ref{Fig3}.   

\begin{table}[ht!]
\centering
\caption{Fixed point and Stability analysis of the map $\mathcal{M}_{r,k}(x,\phi)$.}
\vline\begin{tabular}{||c||c||c||c||}
\hline
$r$ &     $(x,\phi)$  &      eigenvalues &              Behavior \\
\hline
$2$ & $(0.00972,0.00324)$ & $(0.04159,-0.5)$& stable \\ 
& $(0.1659,0.0553)$ & $(1.8,-0.5)$&  saddle\\ 
&$ (3.296,1.098)$ &$ (-0.78 + 0.275i, -0.78-0.275i)$& stable  \\
 \hline
$2.8$ & $(0.01082,0.0036)$ & $(0.25,-0.5)$&  stable\\ 
& $(0.06033,0.02)$ & $(1.712,-0.5)$& saddle \\
& $(4.279,1.426)$ &$ (-1.609,-1.0896)$& unstable \\
\hline
${3}$ & $(0.01146,0.0038)$ &$ (0.352,-0.5)$&  stable\\ 
& $(0.04601,0.0153)$ & $(1.6248,-0.5)$&  saddle\\  
& $(4.507,1.5)$ & $(-2.046,-0.9)$&  saddle\\ 
\hline
{$5$} & $(6.869,2.289)$ & $(-5.395,-0.325)$& saddle \\

\hline 
\end{tabular}\vline\label{Table:2}
\end{table}

%------------------------------------------------------------

\section{Bifurcation analysis}
A dynamical phenomenon that helps us to understand the qualitative change in stability of a system occurs with a change in a parameter and has many practical applications, such as predicting the spiking activities of neurons. In \cite{izhikevich2007dynamical}, it is noted that there can be various reasons triggering a neuron to fire a spike, but there are only four primary types of bifurcation that mathematically can explain the cause of these behaviors. This underscores the importance of bifurcation analysis, as studying only four types of bifurcation has the potential to understand millions of ways of spiking generation. In this section, we investigate the system bifurcation behavior using both analytically and numerical approaches. 

\subsection{Neimark-Sacker (NS) bifurcation}

We analyze the NS-bifurcation of the unique positive fixed point $(x^*,\phi^*)$ of the map $\mathcal{M}_{r,k}(x,\phi)$, by employing bifurcation theory with $k$ treated as a bifurcation parameter. An NS-bifurcation gives rise to dynamically invariant circles, which can contain both isolated orbits with periodic behavior and dense trajectories covering the invariant circle \cite{din2017global}. We aim to determine the condition under which the map $\mathcal{M}_{r,k}(x,\phi)$ exhibits a non-hyperbolic fixed point characterized by a pair of complex conjugate eigenvalues with unit modulus. Another type of methodology is used to study the NS-bifurcation for a $2D$ discrete delayed Homographic Ricker map, as shown in \cite{chandramouli2025dynamics}.

 Now, the characteristic polynomial Eq. \eqref{char_eqn} of the Jacobian matrix of the map $\mathcal{M}_{r,k}(x,\phi)$ at the unique positive fixed point $(x^*,\phi^*)$ possess a pair of complex conjugate eigenvalues with unit modulus if the following conditions are fulfilled:
\begin{equation}
    k=\frac{1+{k_2}(2-x^*)\gamma_1}{k_{1}\gamma_2-k_2\;\cos(\pi\phi^*)}; \hspace{0.4cm} k_{1}\gamma_2\neq k_2\; \cos(\pi\phi^*), \label{K}
\end{equation}

\noindent and
\begin{equation}
    \big\lvert  (2-x^*)\gamma_1+k\;\cos(\pi\phi^*) -k_2 \big\rvert<2. \label{2nd}
\end{equation}

\noindent where $\gamma_1 = x^*e^{r-x^*}$, and $\gamma_2 = {\pi}{x^*}\;\sin({\pi}{\phi^*})$.

\vspace{0.2cm}

\noindent Assume that :  $\Psi_{NS}=\{(k_0,k_1,k_2,k,r): $ Eqs. $ \eqref{K} $ and $ \eqref{2nd}$ are satisfied.$\}$    

\vspace{0.2cm}
\noindent The unique positive fixed point $(x^*,\phi^*)$ of the map $\mathcal{M}_{r,k}(x,\phi)$ undergoes NS-bifurcation as the parameters vary within the small neighborhood of $\Psi_{NS}$. Let $k^{'}=\frac{1+{k_2}(2-x^*)\gamma_1}{k_{1}\gamma_2-k_2\;\cos(\pi\phi^*)}$ and $(k_0,k_1,k_2,k^{'},r) \in \Psi_{NS}$, we consider the map $\mathcal{M}_{r,k}(x,\phi)$ with parameter $(k_0,k_1,k_2,k^{'},r)$ which is expressed by the following map:
\begin{equation}\label{map_ns_def1}
 \begin{bmatrix} x \\ \phi \end{bmatrix}  \mapsto \begin{bmatrix} x^{2}e^{r-x}+k_0+k^{'}x\;\cos(\pi\phi) \\ k_{1}x-k_{2}\phi \end{bmatrix}
\end{equation}

\noindent We treat $\Tilde{k}$ as the bifurcation parameter and apply the perturbation in Eq. \eqref{map_ns_def1} in the following form:
\begin{equation}\label{map_ns_def2}
 \begin{bmatrix} x \\ \phi \end{bmatrix}  \mapsto \begin{bmatrix} x^{2}e^{r-x}+k_0+(k^{'}+\Tilde{k})x\;\cos(\pi\phi) \\ k_{1}x-k_{2}\phi \end{bmatrix}
\end{equation}

\noindent where $\lvert\Tilde{k} \rvert\ \ll 1$ represents a small perturbation.

\noindent Next, we apply the transformation $X=x-x^*, \Phi=\phi-\phi^*$, then the map \eqref{map_ns_def2} is transferred as follows:
\begin{equation}\label{map_ns_def3}
 \begin{bmatrix} X \\ \Phi \end{bmatrix}  \mapsto 
  \begin{bmatrix} a_{11} & a_{12} \\ a_{21} & a_{22} \end{bmatrix}  \begin{bmatrix} X \\ \Phi \end{bmatrix}+
 \begin{bmatrix} f_{1}(X,\Phi) \\ f_{2}(X,\Phi)
 \end{bmatrix}
\end{equation}

\noindent where

$f_{1}(X,\Phi)=a_{13}X^2+a_{14}X{\Phi}+a_{15}\Phi^2+b_{1}X^3+b_{2}X^{2}\Phi+b_{3}X\Phi^2+b_{4}\Phi^3+O\left((\lvert X \rvert+ \lvert \Phi \rvert)^4\right)$,

\vspace{0.2cm}
$f_{2}(X,\Phi)=a_{23}X^2+a_{24}X{\Phi}+a_{25}\Phi^2+c_{1}X^3+c_{2}X^{2}\Phi+c_{3}X\Phi^2+c_{4}\Phi^3+O\left((\lvert X \rvert+ \lvert \Phi \rvert)^4\right)$,

\vspace{0.2cm}
$a_{11}=(2-x^*)x^{*}e^{r-x^*}+(k^{'}+\Tilde{k})\;\cos(\pi\phi^*)$, \hspace{0.3cm} \hspace{0.3cm} $a_{12}=-\pi(k^{'}+\Tilde{k})x^{*}\;\sin(\pi\phi^*)$, \hspace{0.3cm} \hspace{0.3cm}  $a_{22}=-k_2$, \hspace{0.3cm}  $a_{21}=k_1$, \hspace{0.3cm}  $a_{13}=\frac{e^{r-x^*}(x^*-2)^2}{2}$,  \hspace{0.3cm} $a_{14}=-\pi(k^{'}+\Tilde{k})\;\sin(\pi\phi^*)$, \hspace{0.3cm} $a_{15}=-\frac{\pi^{2}(k^{'}+\Tilde{k})x^{*}\;\cos(\pi\phi^*)}{2}$, \hspace{0.3cm}    $a_{23}=0$, \hspace{0.3cm} $a_{24}=0$, \hspace{0.3cm} $a_{25}=0$,\hspace{0.3cm}
$b_{1}=\frac{e^{r-x^*}(x^*-2)(4-x^*)}{12}$,\hspace{0.3cm}
$b_{2}=0$, \hspace{0.3cm}
$b_3=-\frac{\pi^2(k^{'}+\Tilde{k})\;\cos(\pi\phi^*)}{12},$  \hspace{0.3cm}          $b_4=\frac{\pi^3(k^{'}+\Tilde{k})x^{*}\;\sin(\pi\phi^*)}{12},$\hspace{0.3cm}
$c_1=0$, \hspace{0.2cm} $c_2=0$,\hspace{0.2cm} $c_3=0$,\hspace{0.2cm} and\hspace{0.2cm} $c_4=0.$

\noindent The characteristic equation of the Jacobian matrix for the system \eqref{map_ns_def3} calculated at the fixed point $(0,0)$ is given by:
\begin{equation}\label{char_for_k'}
    \lambda^2-p(\Tilde{k})\lambda+q(\Tilde{k})=0.
\end{equation}

\noindent where 
\begin{equation}
    p(\Tilde{k})=(2-x^*)\gamma_1+(k^{'}+\Tilde{k})\;\cos(\pi\phi^*)-k_2.\label{(p(x))}
\end{equation}

\noindent and 
\hspace{1.8cm} $q(\Tilde{k})=-k_2[(2-x^*)\gamma_1+(k^{'}+\Tilde{k})\;\cos(\pi\phi^*)]+(k^{'}+\Tilde{k})k_{1}\gamma_2$.

% \vspace{0.2cm}
\noindent Since $(k_0,k_1,k_2,k^{'},r) \in \Psi_{NS}$, the root of \eqref{char_for_k'} are complex conjugate number $\lambda_{1,2}$ with $\lvert \lambda_{1,2} \rvert = 1$. Therefore, it follows:

% \vspace{0.2cm}
$\lambda_{1,2} = \frac{p(\Tilde{k})}{2} \pm \frac{i}{2}\sqrt{4q(\Tilde{k})-p^2(\Tilde{k})}$.

% \vspace{0.2cm}
\noindent Then we obtain

% \vspace{0.2cm}
$\lvert \lambda_{1,2} \rvert = \sqrt{q(\Tilde{k)}}$,\; and \;$\left(\frac{d \lvert \lambda_{1,2} \rvert}{d\Tilde{k}}\right)_{\Tilde{k}=0}  = \frac{k_1\gamma_2-k_2\;\cos(\pi\phi^*)}{2\sqrt{k^{'}k_1\gamma_2-k_2[(2-x^*)\gamma_1+k^{'}\;\cos(\pi\phi^*)]}} \neq 0$.

\vspace{0.2cm}
\noindent Additionally, from the Eq. (\ref{(p(x))}), we have $p(0) \neq 0$, and $p(0) \neq -1$. Since $(k_0,k_1,k_2,k^{'},r) \in \Psi_{NS}$, it follows that $-2<p(0)<2$. Therefore the condition $p(0) \neq \pm 2,0,-1$ ensure that the $\lambda_1^n,  \lambda_2^n \neq1$, for all $ n=\{1,2,3,4\}$ at $\Tilde{k}=0$. Hence, the roots of Eq. \eqref{char_for_k'} do not lie in the intersection point of the unit circle with the coordinate axes under $\Tilde{k}=0$ (i.e., do not take any value from $\{\pm 1, \pm i\}$), provided the following conditions are satisfied:
\begin{equation}\label{resonance}
    (2-x^*)\gamma_1+k^{'}\;\cos(\pi\phi^*) \neq k_2 , \hspace{0.2cm}   (2-x^*)\gamma_1+k^{'}\;\cos(\pi\phi^*) \neq k_2 -1.
\end{equation}

\noindent To drive the normal form for the system \eqref{map_ns_def3} at $\Tilde{k}=0$, we set $\alpha=\frac{p(0)}{2}$, $\beta=\frac{1}{2}\sqrt{4p(0)-p^2(0)}$ and apply the following transformation:
\begin{equation}\label{map_ns_def4}
 \begin{bmatrix} X \\ \Phi \end{bmatrix}  \mapsto 
  \begin{bmatrix} a_{12} \hspace{0.3cm} &  0 \\ \alpha-a_{11} \hspace{0.3cm} &  -\beta \end{bmatrix}  \begin{bmatrix} u \\ v \end{bmatrix}
\end{equation}

\noindent Under the transformation from the system \eqref{map_ns_def4}, the normal form for the system \eqref{map_ns_def3} takes the following form:
\begin{equation}\label{map_ns_def5}
 \begin{bmatrix} u \\ v \end{bmatrix}  \mapsto 
  \begin{bmatrix} \alpha \hspace{0.3cm} &  -\beta \\ \beta \hspace{0.3cm} &  \alpha\end{bmatrix}  \begin{bmatrix} u \\ v \end{bmatrix}+ \begin{bmatrix}
      G_1(u,v)\\ G_2(u,v)
  \end{bmatrix}
\end{equation}

\noindent where
\noindent $G_1(u,v)= \frac{a_{13}}{a_{12}}X^2+\frac{a_{14}}{a_{12}}X{\Phi}+\frac{a_{15}}{a_{12}}\Phi^2+\frac{b_{1}}{a_{12}}X^3+\frac{b_{2}}{a_{12}}X^{2}\Phi+\frac{b_{3}}{a_{12}}X\Phi^2+\frac{b_{4}}{a_{12}}\Phi^3+O\left((\lvert X \rvert+ \lvert \Phi \rvert)^4\right),$

\vspace{0.2cm}
\noindent $G_2(u,v)= \left(\frac{a_{13}(\alpha-a_{11})}{{\beta}a_{12}}-\frac{a_{23}}{\beta}\right)X^2+\left(\frac{a_{14}(\alpha-a_{11})}{{\beta}a_{12}}-\frac{a_{24}}{\beta}\right)X\Phi+\left(\frac{a_{15}(\alpha-a_{11})}{{\beta}a_{12}}-\frac{a_{25}}{\beta}\right)\Phi^2+\left(\frac{b_{1}(\alpha-a_{11})}{{\beta}a_{12}}-\frac{c_{1}}{\beta}\right)X^3+\left(\frac{b_{2}(\alpha-a_{11})}{{\beta}a_{12}}-\frac{c_{2}}{\beta}\right)X^2\Phi+\left(\frac{b_{3}(\alpha-a_{11})}{{\beta}a_{12}}-\frac{c_{3}}{\beta}\right)X\Phi^2+\left(\frac{b_{4}(\alpha-a_{11})}{{\beta}a_{12}}-\frac{c_{4}}{\beta}\right)\Phi^3+O\left((\lvert X \rvert+ \lvert \Phi \rvert)^4\right).$

\vspace{0.2cm}
\noindent $X=a_{12}u$ and $\Phi=(\alpha-a_{11})u-{\beta}v$. We now define the following nonzero real quantity:

% \vspace{0.2cm}
$\theta=[-Re\left(\frac{(1-2\lambda_1)\lambda_2^2}{1-\lambda_1}L_{11}L_{12}\right)-\frac{1}{2}\lvert L_{11} \rvert^{2}-\lvert L_{21} \rvert^{2}+Re(\lambda_2L_{22})]_{\Tilde{k}=0}$,

% \vspace{0.2cm}
\noindent where

% \vspace{0.2cm}
$L_{11}=\frac{1}{4}[({G_1}_{uu}+{G_1}_{vv})+i({G_2}_{uu}+{G_2}_{vv})]$

% \vspace{0.2cm}
$L_{12}=\frac{1}{8}[({G_1}_{uu}-{G_1}_{vv}+2{G_2}_{uv})+i({G_2}_{uu}-{G_2}_{vv}-2{G_1}_{uv})]$

% \vspace{0.2cm}
$L_{21}=\frac{1}{8}[({G_1}_{uu}-{G_1}_{vv}-2{G_2}_{uv})+i({G_2}_{uu}-{G_2}_{vv}+2{G_1}_{uv})]$

% \vspace{0.2cm}
$L_{22}=\frac{1}{16}[({G_1}_{xxx}+{G_1}_{xyy}+{G_2}_{xxy}+{G_2}_{yyy})+i({G_2}_{xxx}+{G_2}_{xyy}-{G_1}_{xxy}-{G_1}_{yyy})]$

% \vspace{0.2cm}
\noindent Based on the above analysis, we establish the following theorem.

% \vspace{0.2cm}
\begin{theorem}
  Let $\mathcal{M}_{r,k}(x,\phi)$ be the map, and it satisfies the condition \eqref{resonance} and $\theta\neq0$. Then the map $\mathcal{M}_{r,k}(x,\phi)$ undergoes a NS-bifurcation at the unique positive fixed point $(x^*,\phi^*)$ as the parameter $k$ varies in a small neighborhood of $k^{'}=\frac{1+{k_2}(2-x^*)\gamma_1}{k_{1}\gamma_2-k_2\;\cos(\pi\phi^*)}$. Moreover, if $\theta<0$, then an attracting invariant closed curve bifurcation occurs from the fixed point for $k>k^{'}$, whereas if $\theta>0$, then a repelling invariant closed curve bifurcation occurs from the fixed point for $k<k^{'}$.
\end{theorem}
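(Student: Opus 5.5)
The plan is to verify the hypotheses of the standard Neimark--Sacker theorem for planar maps, as in Kuznetsov's formulation, for the family \eqref{map_ns_def2} with $\Tilde{k}$ as the parameter. The required ingredients are the conjugate-pair/unit-modulus condition, the transversality condition, the non-resonance condition, and a nonzero first Lyapunov coefficient. Most of these have already been assembled in the discussion before the statement, so the proof largely consists of collecting them and checking that each is used correctly.

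First, I will use the shifted system \eqref{map_ns_def3}, in which the fixed point $(x^*,\phi^*)$ is moved to the origin. At $\Tilde{k}=0$, i.e.\ $k=k'$, the quantity $q(0)=1$ holds by the choice \eqref{K}. Condition \eqref{2nd} gives $|p(0)|<2$, so $p(0)^2-4q(0)<0$. Lemma \ref{lemma_stability}(5) then shows that the eigenvalues $\lambda_{1,2}$ are complex conjugates with $|\lambda_{1,2}|=1$.

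Second, transversality. Since $|\lambda_{1,2}(\Tilde{k})|=\sqrt{q(\Tilde{k})}$ and $q$ is affine in $\Tilde{k}$ with slope $k_1\gamma_2-k_2\cos(\pi\phi^*)$, the derivative $d|\lambda|/d\Tilde{k}$ at $0$ equals $\tfrac12(k_1\gamma_2-k_2\cos(\pi\phi^*))$. This is nonzero by the side condition in \eqref{K}.

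Third, non-resonance. Because $\lambda_{1,2}=e^{\pm i\theta_0}$ with $2\cos\theta_0=p(0)$, the relations $\lambda^m=1$ for $m=1,2,3,4$ correspond to $p(0)=2,-2,-1,0$. The values $\pm2$ are excluded by the strict inequality in \eqref{2nd}, and the values $0,-1$ are exactly excluded by \eqref{resonance}.

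Next comes the normal form. I will apply the linear change \eqref{map_ns_def4}, with $\beta=\tfrac12\sqrt{4q(0)-p(0)^2}$, which brings the linear part to the rotation form $\begin{bmatrix}\alpha&-\beta\\ \beta&\alpha\end{bmatrix}$; invertibility requires $a_{12}\beta\neq0$. I then substitute $X=a_{12}u$ and $\Phi=(\alpha-a_{11})u-\beta v$ into $f_1,f_2$ to obtain $G_1,G_2$. Their second and third partial derivatives at the origin produce $L_{11},L_{12},L_{21},L_{22}$, and hence the real number $\theta$, which is (up to a positive factor) the first Lyapunov coefficient $\mathrm{Re}\bigl(e^{-i\theta_0}c_1\bigr)$ in Kuznetsov's formula. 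The main obstacle is precisely this step. One must make sure that the paper's expression for $\theta$ matches the standard coefficient, including sign and normalization, and that the cubic terms are computed in the $(u,v)$ variables rather than $(X,\Phi)$. I would check this by expanding the complexified map $z\mapsto\lambda_1 z+\sum g_{ij}z^i\bar z^j/(i!j!)$ and comparing it with the $L$'s.

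Finally, with all four conditions in hand, the NS theorem gives a unique closed invariant curve bifurcating from the fixed point for $\Tilde{k}$ on one side of $0$. The sign of $\theta$ decides its nature: it is supercritical (attracting) when $\theta<0$ and subcritical (repelling) when $\theta>0$. The side on which the curve appears is determined by the sign of the transversality derivative combined with the sign of $\theta$, and translating back via $k=k'+\Tilde{k}$ gives the stated regions $k>k'$ or $k<k'$. Here one should note that this orientation implicitly assumes $d|\lambda|/d\Tilde{k}>0$.
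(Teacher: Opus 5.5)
Your outline follows the paper's own argument (the computation preceding the theorem) step for step. It uses the unit-modulus conjugate pair from $q(0)=1$ and \eqref{2nd}, transversality from $|\lambda_{1,2}|=\sqrt{q}$, non-resonance from $p(0)\neq\pm2,0,-1$, and the change of variables $X=a_{12}u$, $\Phi=(\alpha-a_{11})u-\beta v$. It then uses $\theta$, which is exactly Kuznetsov's first Lyapunov coefficient $d(0)$ specialized to $|\lambda_1|=1$, with $L_{11},L_{12},L_{21},L_{22}$ in the roles of $g_{11}$, $g_{20}/2$, $g_{02}/2$, $g_{21}/2$. You also do three things correctly that the paper does not: you repair $\beta$ (the paper writes $4p(0)$ instead of $4q(0)$), you insist on $(u,v)$-derivatives in $L_{22}$, and you note that the orientation ($k>k'$ versus $k<k'$) requires a positive transversality derivative. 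The side condition in \eqref{K} only gives a nonzero one.

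The step that genuinely fails, in your proposal exactly as in the paper, is transversality: you treat $(x^*,\phi^*)$ as independent of $\tilde k$, and it is not. At $k=k'$ we have $F'(x^*)=-P(1)/(1+k_2)=-(2-p(0))/(1+k_2)\neq0$. The implicit function theorem applied to \eqref{8} therefore gives a smooth branch $x^*(k)$ with
\[
\frac{dx^*}{dk}\Big|_{k=k'}=\frac{(1+k_2)\,x^*\cos(\pi\phi^*)}{2-p(0)},\qquad \phi^*(k)=\frac{k_1x^*(k)}{1+k_2},
\]
which is nonzero unless $\cos(\pi\phi^*)=0$. Hence \eqref{map_ns_def3} is not the shifted family for $\tilde k\neq0$. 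Its first component should contain the constant $\tilde k\,x^*\cos(\pi\phi^*)$, so the origin is no longer fixed. The eigenvalues that matter are those of the Jacobian at the moving fixed point, and their squared modulus $q(x^*(k),k)$ is not affine in $k$. Its derivative at $k'$ is
\[
k_1\gamma_2-k_2\cos(\pi\phi^*)+\frac{\partial q}{\partial x^*}\,\frac{dx^*}{dk},
\]
with $\partial q/\partial x^*$ taken along $\phi^*=k_1x^*/(1+k_2)$. So $k_1\gamma_2\neq k_2\cos(\pi\phi^*)$ does not secure the transversality hypothesis, and your orientation caveat must refer to the sign of this total derivative. The repair is to translate by the moving fixed point. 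Then impose that the total derivative is nonzero, and positive for the orientation as stated.

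Finally, when forming $G_1,G_2$, use the true Taylor coefficients of the map, not the listed ones, several of which are wrong. The correct values are $a_{13}=\tfrac12e^{r-x^*}(x^{*2}-4x^*+2)$, $b_1=\tfrac16e^{r-x^*}(-x^{*2}+6x^*-6)$, $b_3=-\tfrac12\pi^2k'\cos(\pi\phi^*)$ and $b_4=\tfrac16\pi^3k'x^*\sin(\pi\phi^*)$. Otherwise the computed $\theta$ is not the first Lyapunov coefficient, and its sign need not determine the stability of the bifurcating curve.
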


%%-------------------------------------------------------

\subsection{Bifurcation simulation}

\begin{figure}[ht!] 
    \centering
    \subfloat[Bif. of $x$-component of map $\mathcal{M}_{r,k}(x,\phi)$ ]{%
        \includegraphics[width=0.55\textwidth,height=0.35\textwidth]{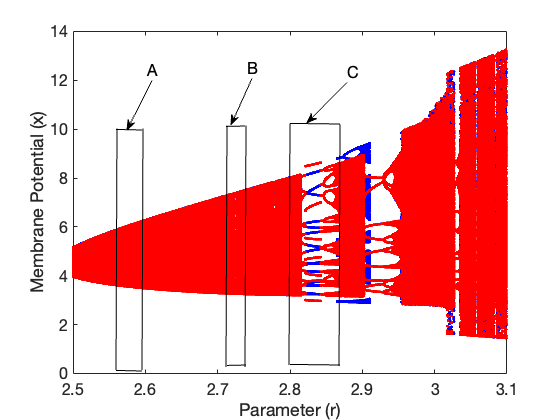}%
        }%
    %\hfill%
    \subfloat[Zoomed part A]{%
        \includegraphics[width=0.55\textwidth,height=0.35\textwidth]{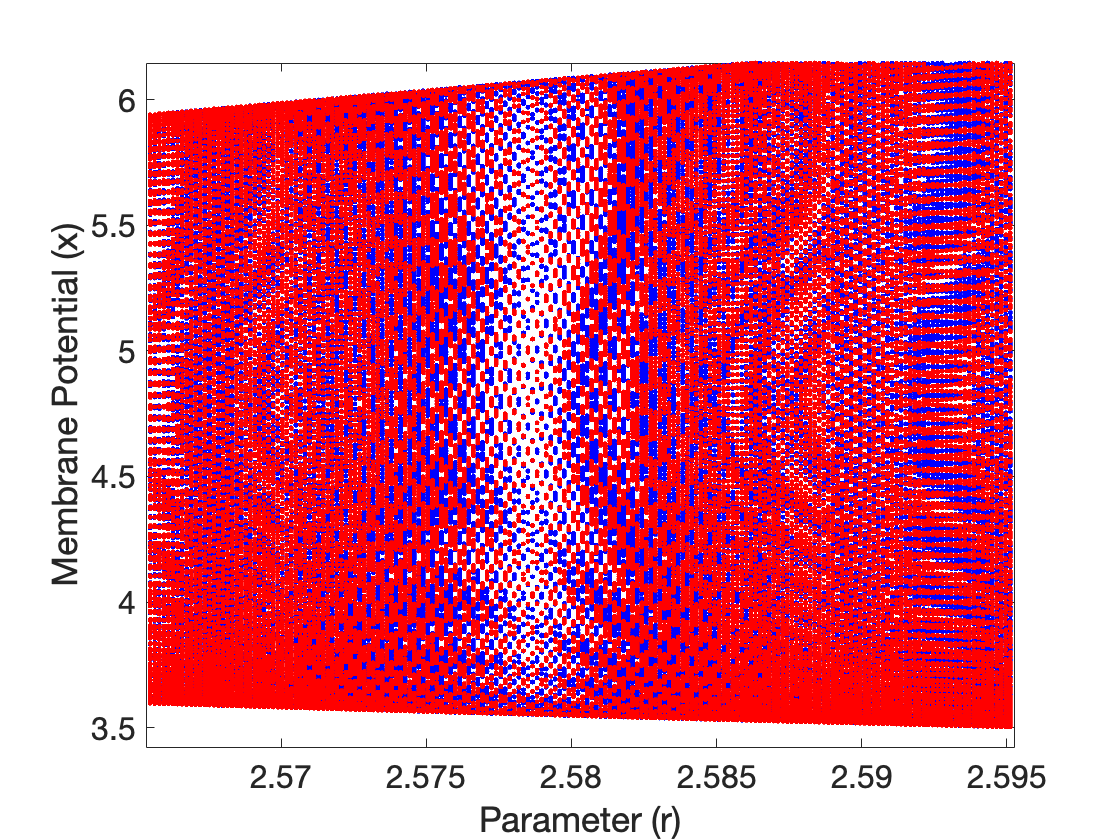}%
        }%
        \vfill
         \subfloat[Zoomed part B]{%
        \includegraphics[width=0.55\textwidth,height=0.35\textwidth]{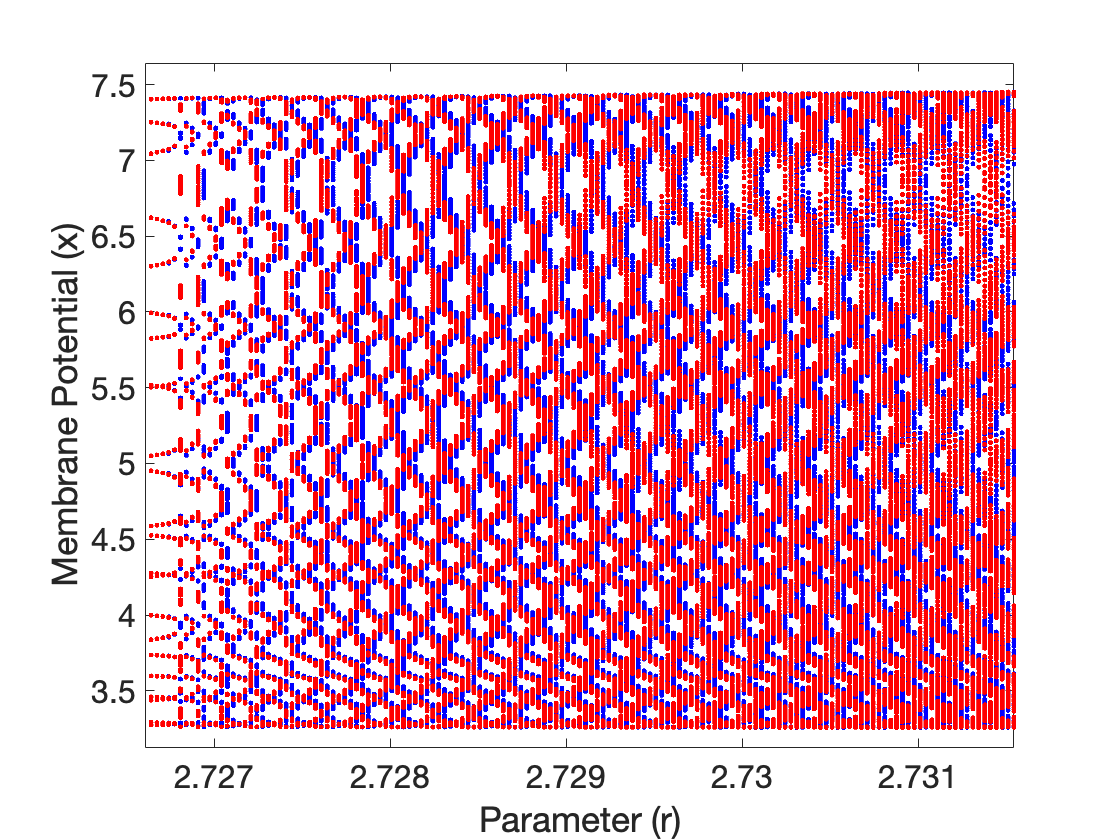}%
        }%
    %\hfill%
    \subfloat[Zoomed part C]{%
        \includegraphics[width=0.55\textwidth,height=0.35\textwidth]{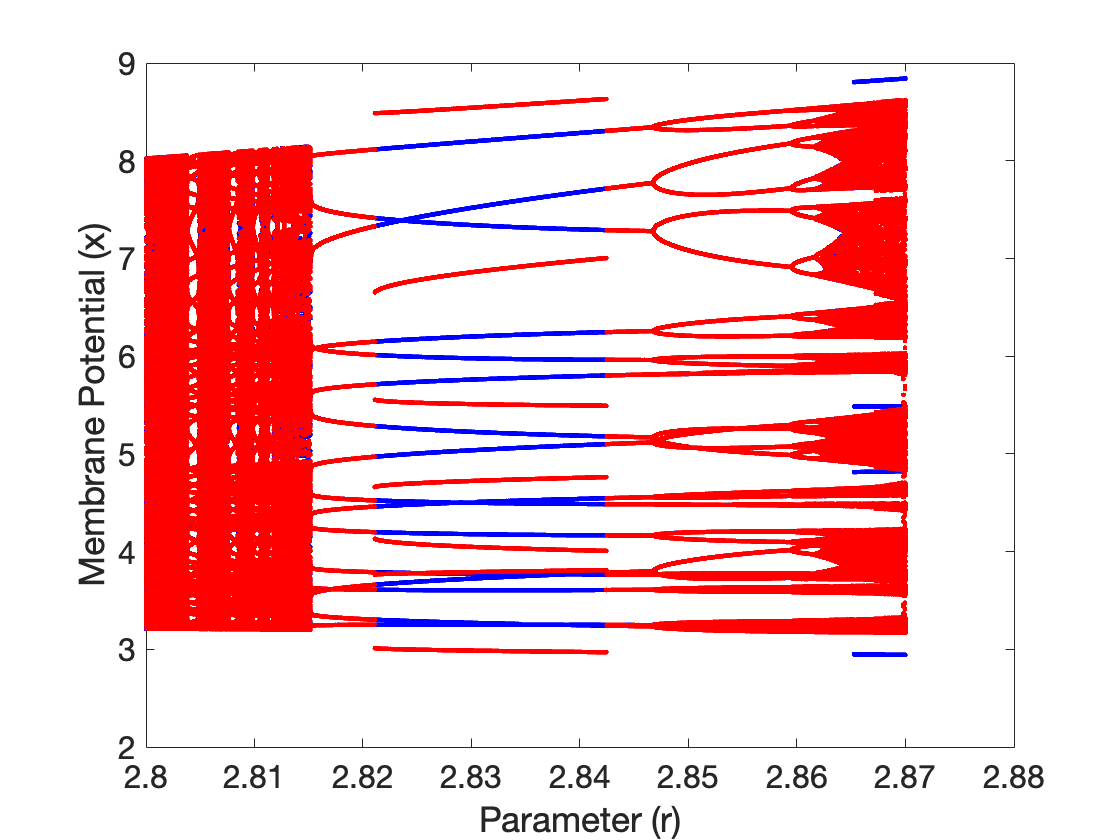}%
        }%
    \caption{(a) shows the forward (marked in red) and backward (marked in blue) bifurcation plot of membrane potential $x$ of the map $\mathcal{M}_{r,k}(x,\phi)$ w.r.to parameter $r$. In (b), (c), and (d) are the zoomed versions for different parameter $r$ ranges. Parameters are considered as: $k_0=1, k_1=0.1, k_2=0.5, $ and $ k=0.3$.}\label{for_back_bif}
\end{figure}

In this subsection, we discuss a numerical bifurcation analysis of the map $\mathcal{M}_{r,k}(x,\phi)$ with respect to the parameter $r$. For a fine range parameter $r\in [2.5,3.1]$, a bifurcation diagram of membrane potential is plotted in the $(r,x)$ plane through both forward and backward continuation, as illustrated in Fig.\ref{for_back_bif}. Forward continuation (which is marked in red color) is performed by varying the recovery parameter $r$ from $2.5$ to $3.1$, and the backward continuation (which is marked with blue color) is performed by varying the recovery parameter $r$ from $3.1$ to $2.5$. The points of both forward and backward continuation are plotted together in the same figure. This approach provides us with the benefit of detecting the multistability in the system. If the points obtained through forward and backward continuation do not fully coincide, this indicates the presence of multistability (a phenomenon in which multiple attractors coexist for a fixed value of parameters) in the system. One might naturally wonder what those stable states imply from this analysis. In this subsection, we focus on the numerical bifurcation analysis. A  detailed investigation of the coexisting multiple attractors and their nature is presented in the next section.

In Fig.\ref{for_back_bif}(a), it is shown that the forward and backward bifurcation plots do not overlap entirely with each other. Although the overlap visualization is unclear from the figure Fig. \ref{for_back_bif}(a). But as we zoom the different ranges with respect to the parameter $r$, then we obtain a more transparent view as demonstrated by those different versions in subfigures (i.e.,(b),(c),(d)) of Fig.\ref{for_back_bif}. Subfigure (b) presents a clearer view of non-overlapping forward and backward bifurcation. In subfigure (c), we observe the formation of chaotic bubbles in the bifurcation diagram. The chaotic bubbles in the bifurcated diagram indicate the simultaneous emergence and disappearance of periodic orbits. This phenomenon is known as anti-monotonicity, and it is driven by the bifurcation diagram of non-monotonic behavior \cite{dawson1992antimonotonicity}. Antimonotonicity is a fundamental characteristic of nonlinear dynamical systems and is found in many systems, including experimental observations in Chua's circuit \cite{kocarev1993experimental}. In subfigure (d), period-doubling and reverse period-doubling phenomena along with a stable state are observed for the map $\mathcal{M}_{r,k}(x,\phi)$.

%-----------------------------

% \newpage

\subsection{Codimensional bifurcation study} In this subsection, we analyze the bifurcation study using MATContM Software as detailed in \cite{kuznet2019numerical}. This study explores the codimension-1 and codimension-2 bifurcation analysis for the map $\mathcal{M}_{r,k}(x,\phi)$. In \cite{depannemaecker2026next}, it has been discussed that the bifurcation patterns that are state invariant under both codimensional-1 and codimensional-2 bifurcations can serve as the indicators of common topological structures across different models. This study used the bifurcation structure, which is preserved from codimensional-1 to codimensional-2 studies, to identify shared topological features, even when the models under consideration differ. In the following subsection, we explore bifurcation analysis considering codimensional-1 and -2 spaces. The abbreviations of different codimensional-1 and -2 bifurcations are illustrated in Table \ref{Table_correlation}.

\begin{table}[ht!]
\centering
\caption{Abbreviations for the codimension-1 and -2 bifurcation.}
\vline\begin{tabular}{||c||c||c||c||}
\hline
\textbf{Codimension-1}\\
\hline
LP & Saddle-node (fold) & PD & Period-doubling (flip) \\
 \hline
NS & Neimark-Sacker \\ 

\hline
\textbf{Codimension-2}
 
 \\
\hline
GPD & Generalized flip  & LPPD& Fold-Flip\\
LPNS& Fold-Neimark-Sacker & PDNS& Flip-Neimark-Sacker\\
CH & Chenciner  & CP& Cusp  \\ 
R1 & 1:1 resonance & R2 & 1:2 resonance \\
R3 & 1:3 resonance & R4 & 1:4 resonance \\ 
\hline 
\end{tabular}\vline\label{Table:3}
\end{table}

We began by considering $k_0$ and $k_1$ as the primary parameters for the codimension-1 and -2 study of the map $\mathcal{M}_{r,k}(x,\phi)$, which is demonstrated in Fig. \ref{N_one}. Subfigure \ref{N_one}(a) presents the bifurcation diagram showing the change in membrane potential $x$ with the change in parameter $k_0$, and treating another parameter constant as $r=2, k=-0.1, k_1=0.5, $ and $ k_2=0.5$. Here, the thick blue line denotes the stable fixed point, while the dotted red line indicates the unstable fixed point of the map $\mathcal{M}_{r,k}(x,\phi)$. The system shows two different stages for a sufficiently low value of the parameter $k_0$. The first one is an unstable fixed point corresponding to the negative value of the membrane potential $x$. The other one is that the system exhibits a saddle-node $(LP_2)$ bifurcation (a state where a stable fixed point from the upper side and an unstable fixed point from the lower side collapse and disappear), with the normal form of coefficient $-1.057105e^{+00}$. As the parameter $k_0$ increases, the system exhibits three fixed points: one stable and two unstable. Further, with an increase in $k_0$, we witness supercritical period-doubling $(PD_2)$ bifurcation (a dynamical state where the system transforms from a stable fixed point to an unstable one and a stable period-2 orbits emerges) with the normal form of coefficient $4.120733e^{-01}$, and along with that two unstable fixed points are also observed in the system. With further increase in the parameter $k_0$, we observe the subcritical period-doubling $(PD_3)$ bifurcation (when a system transforms from an unstable state to a stable one) with the normal form of coefficient $-4.251796e^{-01}$, and along with this, two unstable fixed points are also seen in the system. Furthermore, as the parameter $k_0$ increases, the system shows the Neimark-Sacker $(NS_1)$ bifurcation (Neutral saddle which indicates a neutral stability state due to a pair of complex conjugate eigenvalues crossing the unit circle), and one unstable and one stable fixed point are also demonstrated. With the increase in parameter $k_0$, we observe a subcritical period-doubling $(PD_1)$ bifurcation with the normal form of coefficient $3.395820e^{+01}$. As $k_0$ increases further, the system showcases the saddle-node $(LP_1)$ bifurcation with the normal form of coefficient $5.386942e^{+00}$, and at this state, one unstable fixed point and one stable fixed point both collapse and disappear, resulting in the system having a single stable fixed point. Subsequently, with an increase in $k_0$, we observe a supercritical Neimark-Sacker $(NS_2)$ (a dynamical state where a stable fixed point loses its stability and gives rise to an unstable fixed point accompanied by a stable limit cycle) with the normal form of coefficient $-6.111164e^{-01}$. Further, with an increase in $k_0$, we witness a supercritical Neimark-Sacker $(NS_3)$ (a dynamical state in which an unstable fixed point and the stable limit cycle collapse, giving rise to a stable fixed point) with the normal form of coefficient $-6.836661e^{-01}$. Through this process, the system demonstrated various bifurcations, including Neimark-Sacker, period-doubling, and saddle-node bifurcations (i.e., $PD_4, PD_5, NS_4, NS_5, PD_6, LP_3, LP_4, PD_7, NS_6$). In codimension-1 bifurcation analysis, we examine the impact of incorporating periodic electromagnetic radiation into the system. As the cosine function exhibits periodic behavior and a graph with a characteristic up-and-down pattern, a similar pattern is observed here.

Further, this study investigates the codimension-2 bifurcation analysis of the map $\mathcal{M}_{r,k}(x,\phi)$ in $(k_0,k_1)$-plane, illustrated in Fig. \ref{N_one}(b). This diagram incorporates codimension-1 bifurcation curves from Fig. \ref{N_one}(a), whereas the green, blue, and magenta curves are associated with Saddle-node $(LP)$, period-doubling $(PD)$, and Neimark-Sacker $(NS)$ bifurcation depicted in Fig. \ref{N_one}(a). For a sufficiently small value of parameter $k_1$, it presents the four general period-doubling (i.e., $GPD_{\{1,2,3,4\}} $) bifurcations with the creation of a closed symmetric structure. One notable thing we observe here is that although the map $\mathcal{M}_{r,k}(x,\phi)$ is asymmetric, it shows a symmetric pattern in terms of the locus of orbit for the four $GPD_{\{1,2,3,4\}}$ here.

As the parameter $k_1$ increases, a cusp point $(CP_1)$ emerges near the parameter $k_0=5$ on the locus of period-doubling bifurcation (i.e., the blue curve) and two cusp points $(CP_1)$ and $(CP_2)$ near the parameter $k_0=-2$ on the locus of saddle-node bifurcation (i.e., the green lines), along with this a fold-flip $(LPPD_1)$ bifurcation is also observed. Further, as the parameter $k_1$ approaches approximately $2.3$, we see two $1:3$ resonances $(R3)$ on the locus of Neimark-Sacker bifurcation (i.e., on the magenta line) and a fold-flip $(LPPD_2)$ bifurcation on the locus of saddle-node bifurcation. Further increase in $ k_1$, a cusp point $(CP_4)$ on the green line, and a $1:3$ resonance $(R3)$ on the magenta line is showcased.  With an further increase in $k_1$, we observe the cusp point $(CP_5)$, and the flip-fold $(LPPD_3)$ and $(LPPD_4)$ bifurcation. The points of $(LPPD_3)$ and $(LPPD_4)$ are the points where the locus of fold and flip bifurcation coincide. As the parameter $k_1$ increases further, a $1:3$ resonance $(R3)$ and $1:4$ resonance $(R4)$ emerge along the magenta line.  Also, a cusp point $(CP_6)$ is depicted on the green curve. Further, increase in $k_1$, the system showcases the $1:4$ resonance $(R4)$ and flip-fold $(LPPD_5)$ and $(LPPD_6)$ on the locus of saddle-node bifurcation.  Lastly, at a higher value of parameter $k_1$, the system shows multiple $1:1$ resonances $(R1)$ and a cusp point $(CP_7)$ for the map $\mathcal{M}_{r,k}(x,\phi)$.

\begin{figure}[ht!] 
    \centering
    \subfloat[Codimension-1]{%
        \includegraphics[width=0.5\textwidth,height=0.4\textwidth]{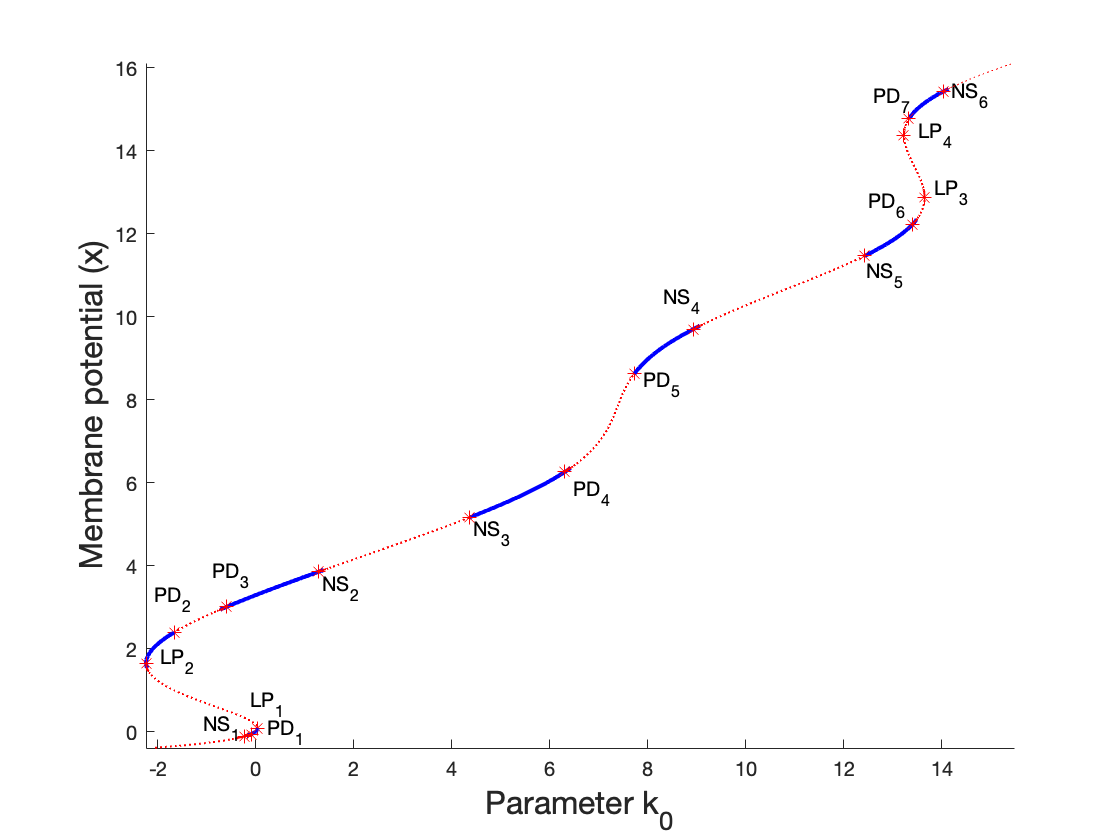}%
        }%
    \hfill%
    \subfloat[Codimension-2]{%
        \includegraphics[width=0.5\textwidth,height=0.4\textwidth]{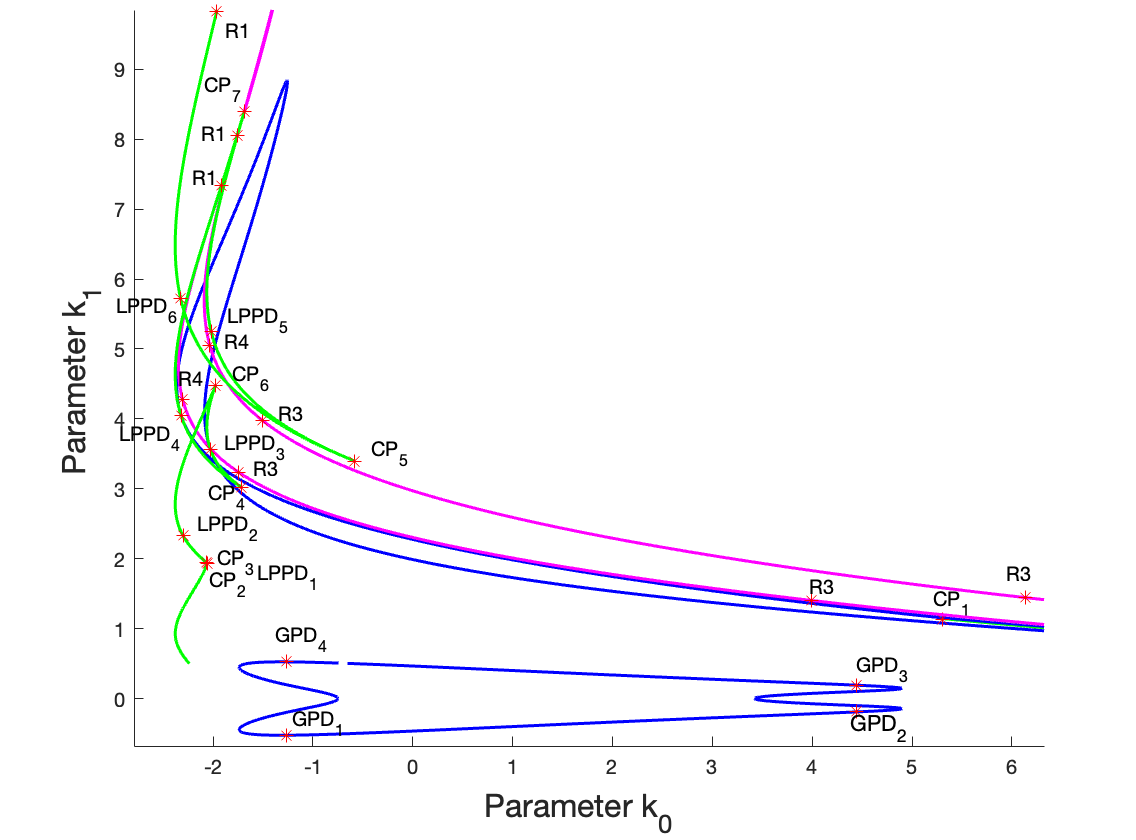}%
        }%
    \caption{(a) shows the codimension-1 bifurcation diagram of the map $\mathcal{M}_{r,k}(x,\phi)$ w.r.to $k_0$. The blue (thick) curve denotes the stable fixed point, while the red (dotted) curve denotes the unstable one. (b) shows the codimension-2 bifurcation diagram in $(k_0,k_1)$-parameter plane, where magenta, blue, and green curves correspond to the loci of the saddle-node (LP), period-doubling $(PD)$, and Neimark-Sacker $(NS)$ bifurcations, respectively.}\label{N_one}
\end{figure}

Now, we discuss the bifurcation study by treating $r$ and $k_2$ as the main parameters of the map $\mathcal{M}_{r,k}(x,\phi)$, as illustrated in Fig. \ref{N-two}. Start with the parameter $r$ for the codimension-1 bifurcation study in the $(r,x)$ plane. The blue (thick) curve represents the stable fixed point, while the red (thin dotted) curve represents the unstable one, as shown in the subfigure \ref{N-two}(a). For a sufficiently small value of the parameter $r$, we witness a single stable fixed point in the system. As $r$ increases, a saddle-node $(LP_2)$ bifurcation is depicted with the normal form of coefficient $2.23286e^{+00}$. Further, an increase of $r$ results in three fixed points: two are stable, and one is unstable. Upon further increase of $r$, we again see a saddle-node $(LP_1)$ bifurcation with the normal form of coefficient $-1.519996e^{-01}$ where an unstable fixed point from the upper side and a stable fixed point from the lower side collapse and disappear, and the system is left with only one stable fixed point.  With a further increase in parameter $r$, a supercritical period-doubling $(PD_1)$ bifurcation with the normal form of coefficient $2.200842e^{-01}$ is observed, leading the system to transform from a stable state to an unstable state. Subsequently, as the parameter $r$ increases, a Neimark-Sacker $(NS_1)$ bifurcation occurs, a neutral saddle.

This analysis is further extended to the codimension-2 bifurcation of the map $\mathcal{M}_{r,k}(x,\phi)$ by varying the parameters $r$ and $k_2$, as shown in Fig. \ref{N-two}(b). The red, black, and blue curves are associated with the saddle-node $(LP)$, Neimark-Sacker $(NS)$, and period-doubling $(PD)$ bifurcation, respectively, which is depicted in a codimension-1 bifurcation study in Fig. \ref{N-two}(a). For the sufficiently higher parameter $k_2$ value, we observe a $1:2$ resonance $(R2)$ on the locus of Neimark-Sacker $(NS)$ bifurcation, which is the black curve.  With a decrease in the parameter $ k_2$, we observe a two-fold-flip (i.e., $LPPD_1, LPPD_2$) bifurcation on the locus of saddle-node bifurcation.  Further, as the parameter $k_2$ decreases, we witness generalized period-doubling $(GPD_7)$ bifurcation on the locus of period-doubling $(PD)$ bifurcation, along with a $1:2$ resonance $(R2)$ bifurcation is depicted at the point of intersection of the loci of period-doubling $(PD)$ and Neimark-Sacker $(NS)$ bifurcation. Further, with a decrease in the parameter  $k_2$, we observe two $1:1$ resonances $(R1)$ and one cusp point $(CP_1)$ on the locus of saddle-node bifurcation.  Lastly, with a decrease in the parameter $k_2$, we obtain a set of flip-fold $(LPPD)$ and generalized period-doubling $(GPD)$ bifurcations.  We zoom into this region and illustrate in Fig. \ref{N-two}(c).  Once again, the impact of using periodic electromagnetic radiation is seen here in the form of the periodic type behavior of the bifurcation.

\begin{figure}[ht!] 
    \centering
    \subfloat[Codimension-1]{%
        \includegraphics[width=0.5\textwidth]{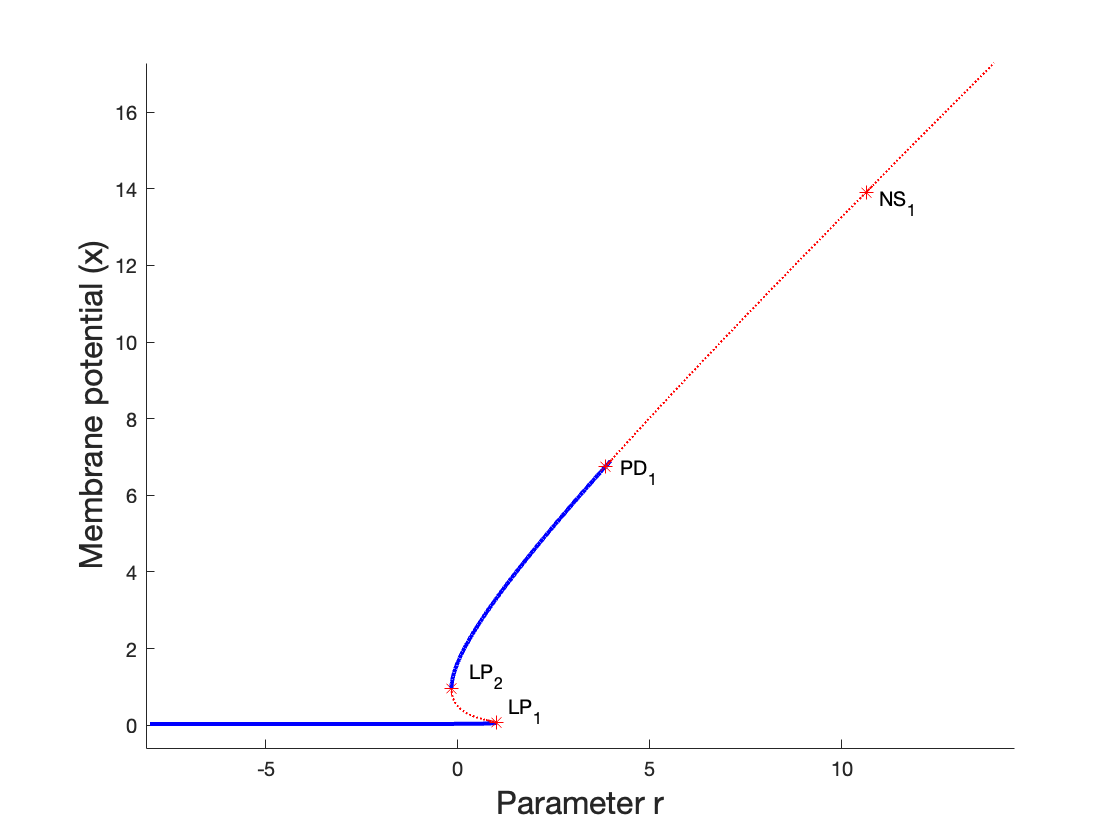}%
        }%
    %\hfill%
    \subfloat[Codimension-2]{%
        \includegraphics[width=0.5\textwidth]{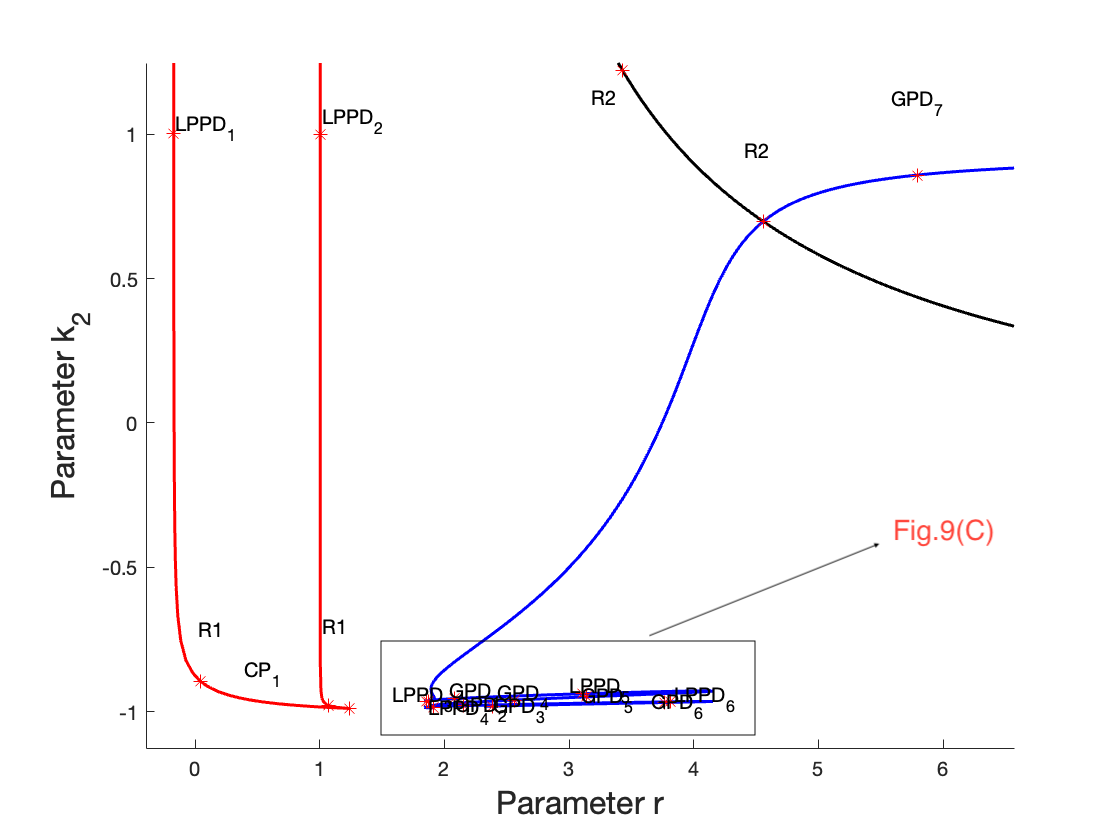}%
        }%
        \vfill%
    \subfloat[Zoom part of panel (b)]{%
        \includegraphics[width=0.5\textwidth]{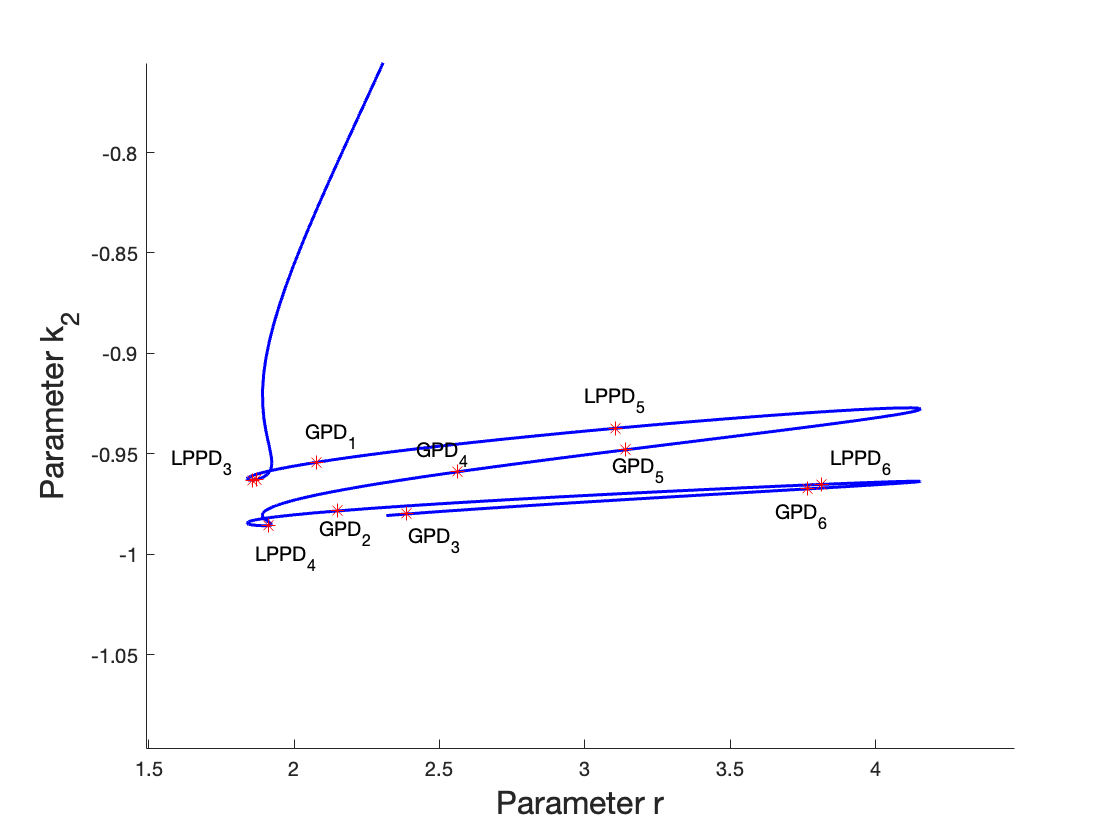}%
        }%
    \caption{In (a), Codimension-1 bifurcation diagram of the map $\mathcal{M}_{r,k}(x,\phi)$ with $r$ as bifurcation parameter. The thick blue curve denotes the stable fixed point, while the dotted red curve indicates the unstable one. In (b), the codimension-2 bifurcation diagram in the $(r,k_2)$ parameter plane, where magenta, blue, and green curves are the loci of the $NS$, $PD$, and  $LP$ bifurcations, respectively. In (c), the zoomed region of 9(b).}\label{N-two}
\end{figure}

% \vspace{-1cm}

%%%--------------------------------------
% \section{Manifolds and Chaotic attractor}

% \begin{figure}[ht!] 
%     \centering
%     \subfloat[Codimension-1]{%
%         \includegraphics[width=0.5\textwidth]{manifold_chaotic.png}%
%         }%
%     %\hfill%
%     \subfloat[codimension-2]{%
%         \includegraphics[width=0.5\textwidth]{manifolds.png}%
%         }%
%         \vfill%
%     \subfloat[codimension-2]{%
%         \includegraphics[width=0.5\textwidth]{stable_chaotic.png}%
%         }%
%     \caption{All the other parameter are set as $k_0=-0.7, k=1, _1=0.1, k_2=0.2, r=2.04$}\label{manifolds}
% \end{figure}  
%%%%--------------------------

\section{Multistability} In this section, we investigate the occurrence of coexisting multiple attractors or multistability phenomena for the map $\mathcal{M}_{r,k}(x,\phi)$. Multistability is an essential feature with important implications for both biological and dynamical systems. Biologically, the ability of a neuron's membrane potential to settle into different stable firing regimes depends on prior activity and input. In that case, this phenomenon is known as multistability, which indicates that neurons naturally evolve and become task-efficient according to need , as our bodies do over time. From a dynamical point of view, multistability indicates the presence of multiple stable invariant sets, such as an attractor, in a nonlinear system. Dynamical systems possess several important properties, one of which is that their long-term behavior is governed by the initial condition; a small difference in initial condition can lead to completely different outcomes.

Here, we discuss the coexistence of a chaotic attractor, a stable limit cycle, and a periodic attractor with the change in parameters $r$ and $k$. By scanning the parameter space, we obtain the coexistence of different attractors corresponding to the different initial conditions, while keeping all the parameters constant (i.e., $k_0=0.06, k_1=0.1, k_2=0.2, k=0.53, $ and $ r=2.7$). Such coexistence of different attractors is illustrated in Fig. \ref{multistability} along with the basin of attraction region corresponding to these attractors. In Fig. \ref{multistability}(a), a coexistence of the chaotic attractor (which is split into three parts) is shown in black color, a period five orbit marked by the red dot, and a stable limit cycle shown in blue color. We have combined all the attractors in a separate figure for clear visualization of the geometric structure of the attractors. Fig. \ref{multistability}(b) shows the basin of attraction region corresponding to the different attractors. We use fine grid points for $(x, \phi)$ to plot the basin of the attraction region, and we evaluate a sufficient number of iterations for each grid point to determine its convergence. A color is assigned to a grid point based on its convergence. The initial points that diverge to infinity are marked with red, those that converge to the chaotic attractor are marked with pink, those that converge to the stable limit cycle are marked with yellow, and those that go to the period-five attractor are assigned the green color. This process creates Fig. \ref{multistability}(b), and a highly intricate basin region plot with the corresponding attractors. Each attractor type is precisely positioned within its region of attraction.

\begin{figure}[ht!] 
    \centering
     \subfloat[Coexistence of multiple attractors]{%
        \includegraphics[width=0.55\textwidth]{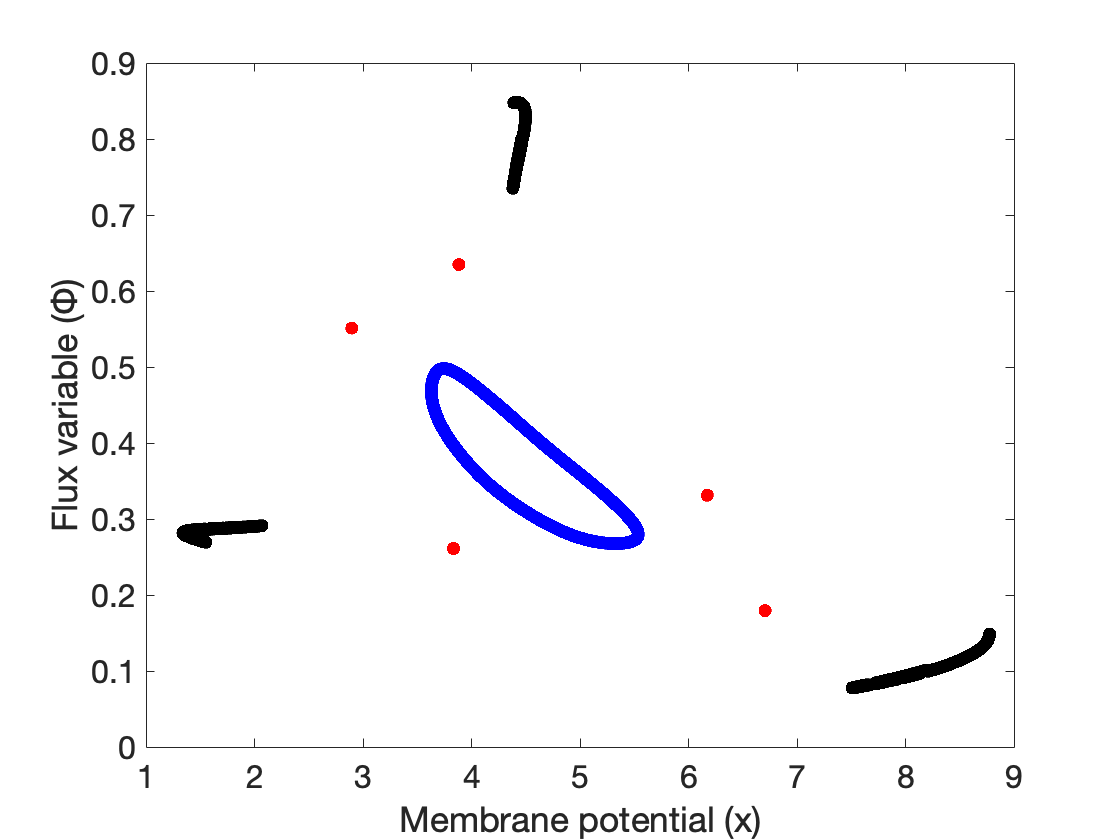}%
        }%
    %\hfill%
    \subfloat[Basin region for coexisted multi-attractors]{%
        \includegraphics[width=0.55\textwidth]{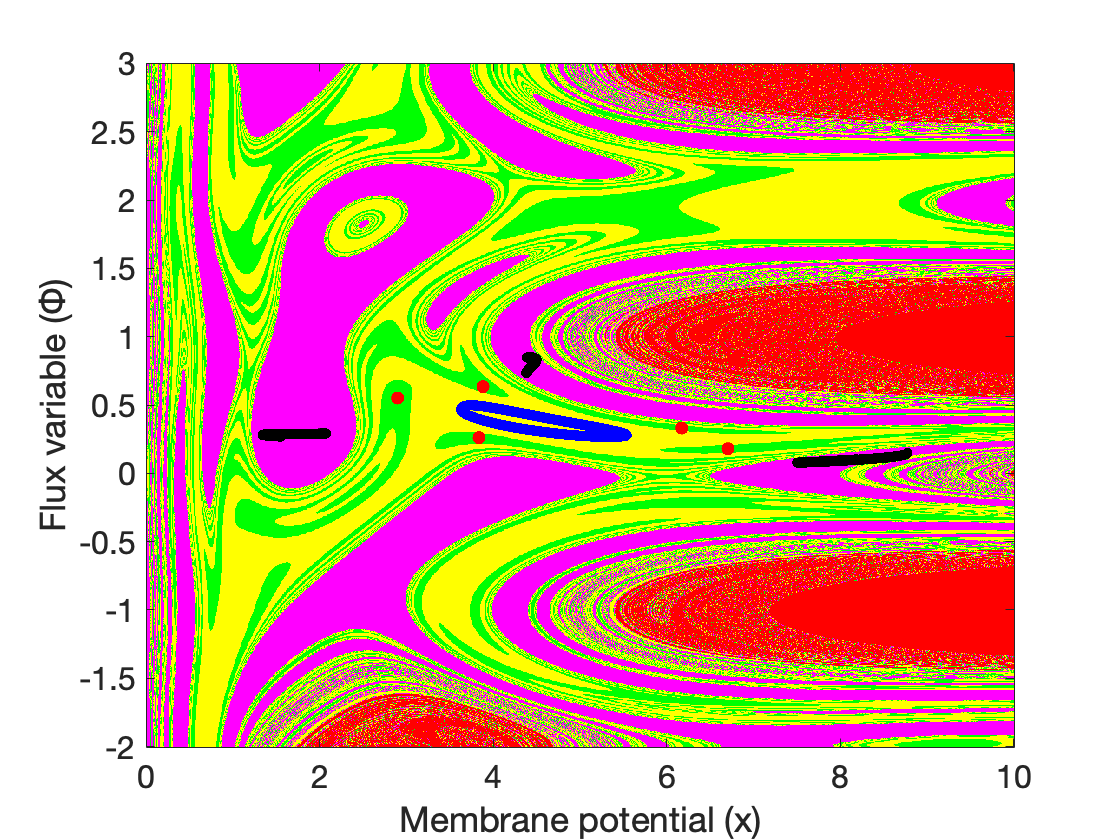}%
        }%
    \caption{Coexistence of stable limit cycle,  period five, and chaotic attractor of the map $\mathcal{M}_{r,k}(x,\phi)$ along with the basin region. (a), The coexistence of a stable limit cycle is shaped like a blue ring, a chaotic attractor in three divided portions in black color, and a period five attractor in a red dot. (b), The basin of the attraction region corresponding to the different attractors. Parameters are considered as: $k_0= 0.06, k_1=0.1.  k_2=0.2, k=0.53, $ and $ r=2.7$
  }\label{multistability}
\end{figure}

To study the complexity of the basin of the attracting region, we zoom in to the different parts of Fig. \ref{multistability}(b) near the chaotic attractor and illustrated in Fig. \ref{c_attractor}. Here, we focus on the chaotic attractor, which is marked in black color, and its basin region, which is indicated by pink color for those initial conditions that converge to the chaotic attractor under the iteration of the map $\mathcal{M}_{r,k}(x,\phi)$. We observe that the chaotic attractor is set properly in its basin of attraction.  As the chaotic attractor splits into different parts, its basin region is also split into many disconnected regions. Apart from the chaotic attractor and its basin region, we observe the basin regions for both the stable limit cycle and the period five cycle as well.

\begin{figure}[ht!] 
    \centering
    \subfloat[]{%
        \includegraphics[width=0.37\textwidth]{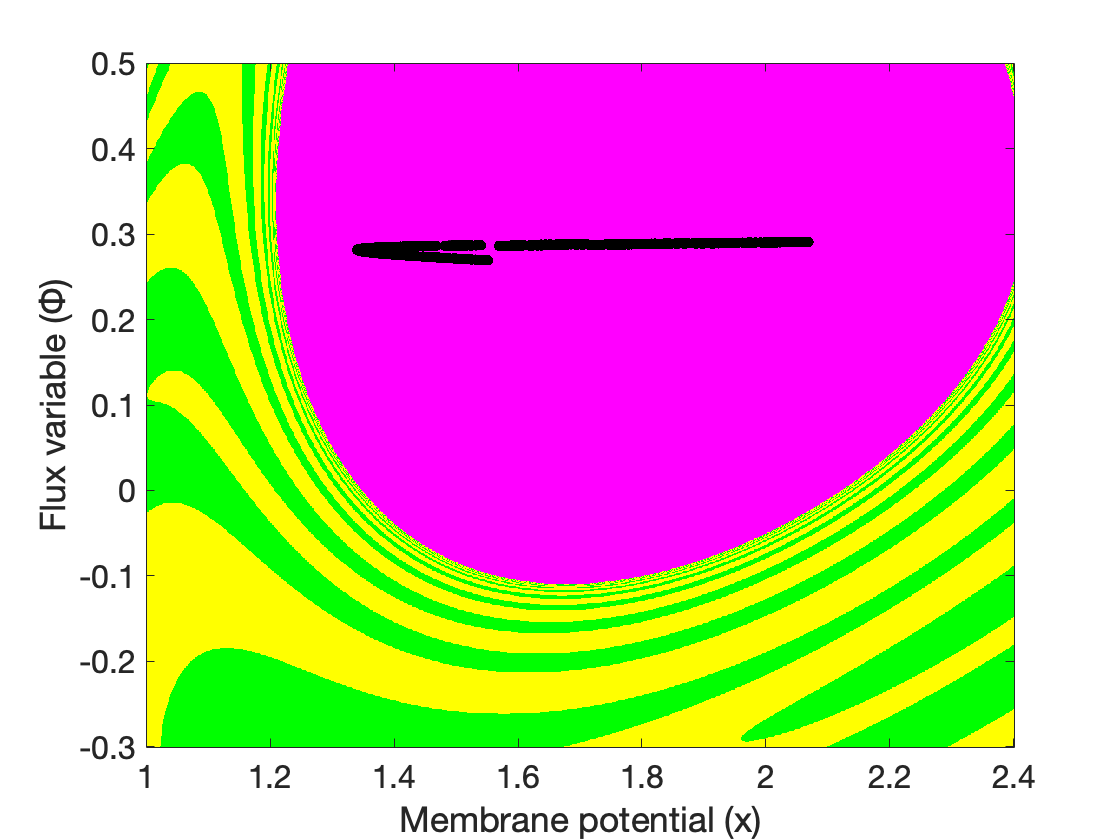}%
        }%
    %\hfill%
    \subfloat[]{%
        \includegraphics[width=0.37\textwidth]{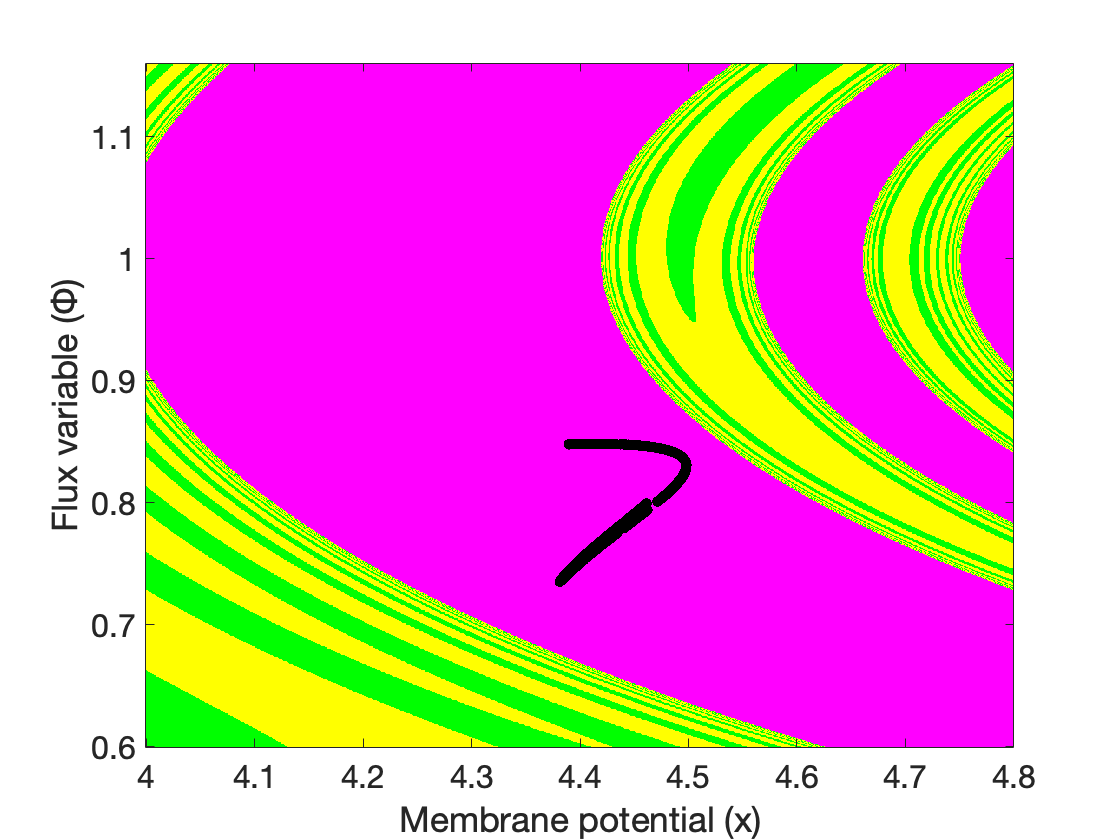}%
        }%
    %\hfill%
    \subfloat[]{%
        \includegraphics[width=0.37\textwidth]{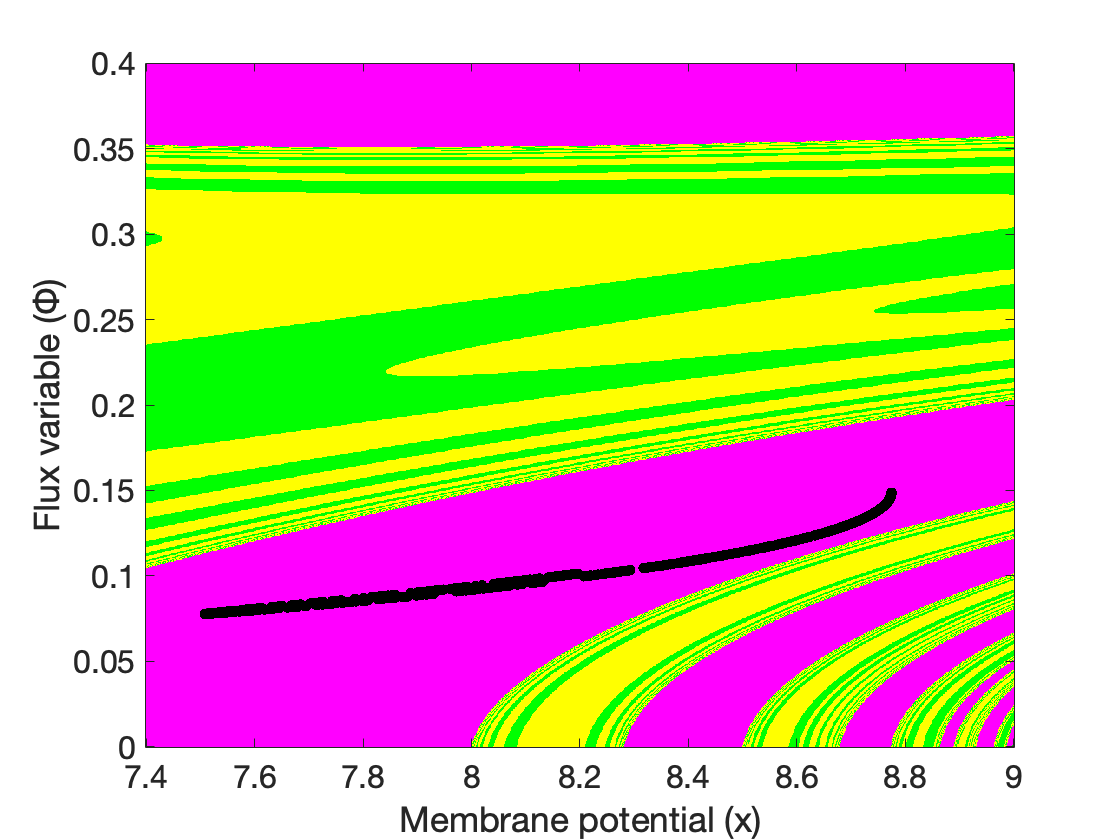}%
        }
    \caption{Illustration of zoomed part of Fig. \ref{multistability}(b) near the chaotic attractor along with its basin of attraction region.}\label{c_attractor}
\end{figure}

Further, we also discuss the zoom parts of Fig.\ref{multistability}(b) near the stable limit cycle and period five attractor, which is shown in Fig.\ref{cycle_five}. In Fig. \ref{cycle_five}(a), a period five is illustrated together with its corresponding basin of attraction, which is shown in green color. The period five attractor is seen surrounding the stable limit cycle and properly situated in its basin of attraction. On the right-top part, we observe the variation in different natures of attractor and their corresponding basin of attraction region, along with the region of escape to infinity. All of this structure is evident in a small region, which highlights the high complexity of the map $\mathcal{M}_{r,k}(x,\phi)$. In Fig. \ref{cycle_five}(b), a stable limit cycle is seen completely sitting in its basin of attraction region, which is shown by the yellow color for those initial conditions that are converging to the limit cycle under the iteration of the map $\mathcal{M}_{r,k}(x,\phi)$. In addition, we observe basins of attraction for both the chaotic attractor and the period-five attractor. This complexity makes the map $\mathcal{M}_{r,k}(x,\phi)$ more prominent to study its dynamical properties.

\begin{figure}[ht!]
    \centering
     \subfloat[Period five attractor with basin region]{%
        \includegraphics[width=0.55\textwidth]{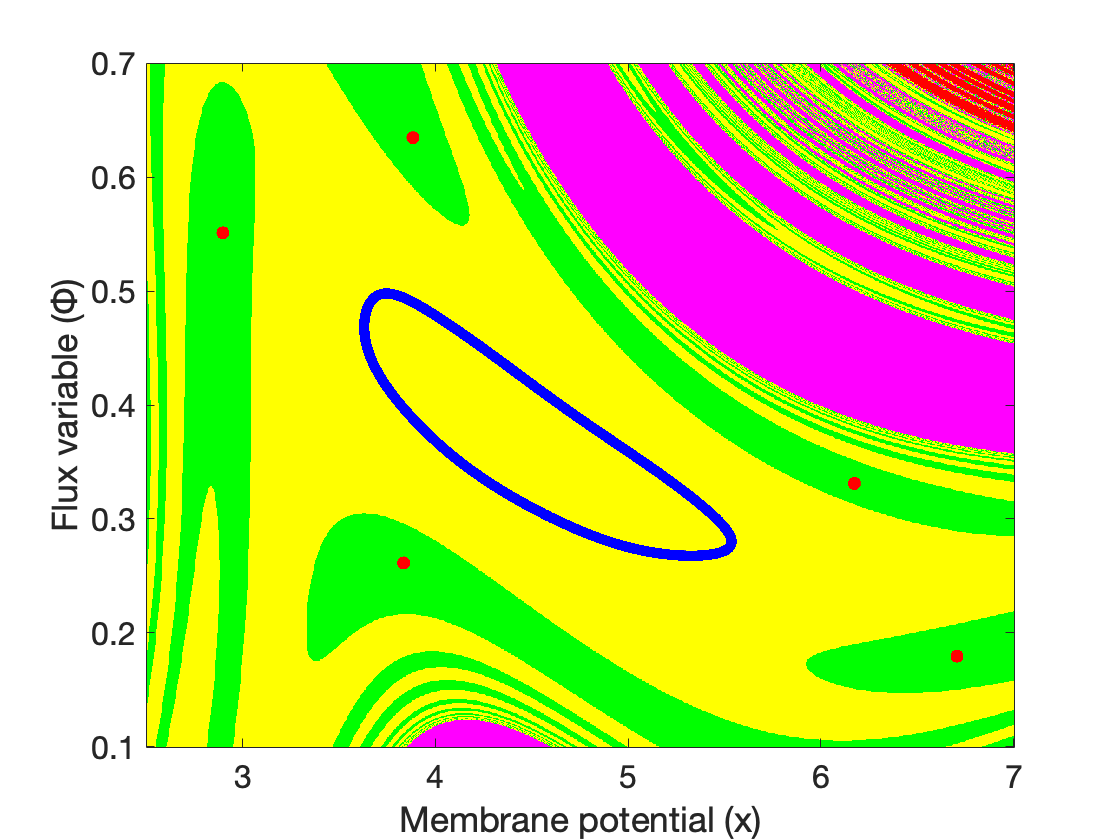}%
        }%
    %\hfill%
    \subfloat[Stable limit cycle with basin region]{%
        \includegraphics[width=0.55\textwidth]{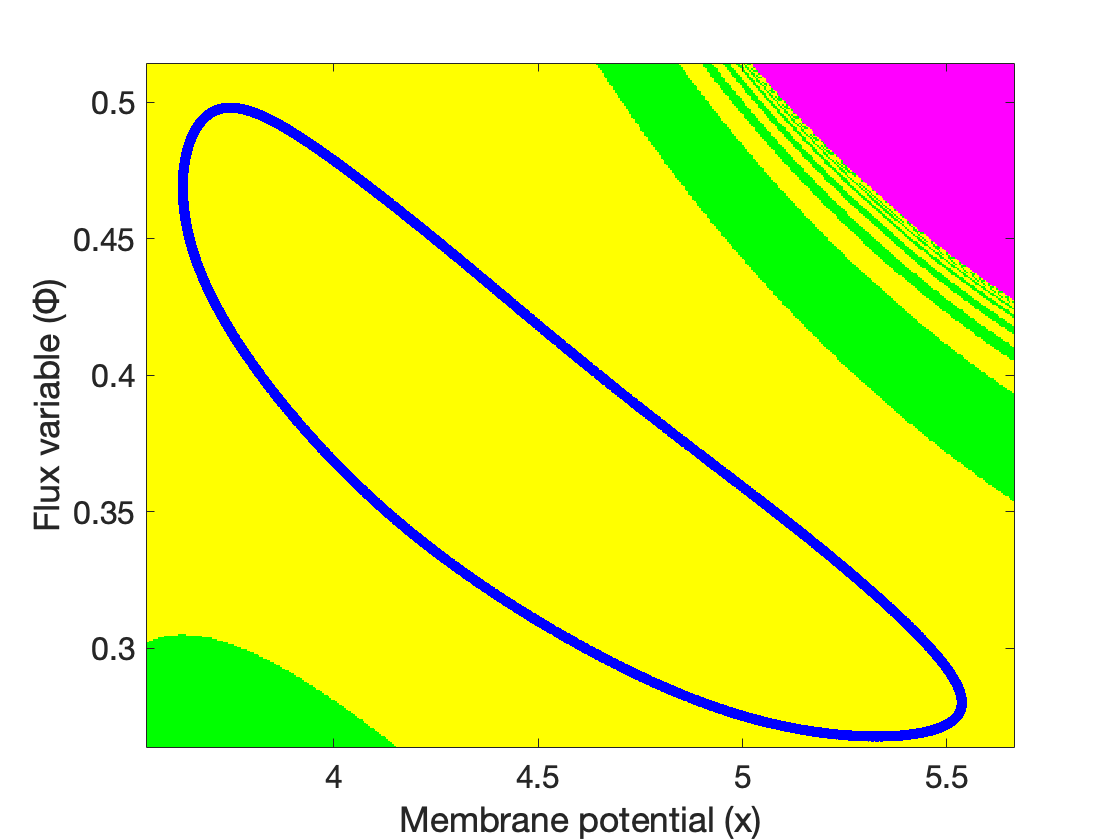}%
        }%
    \caption{(a) Detailed view region near the period-five attractor and its basin of attraction. (b) zoomed region near the stable limit cycle and its basin of attraction in Fig. \ref{multistability}(b).}\label{cycle_five}
\end{figure}

.

%%---------------------------------------------------

\section{Spiking and Bursting}

As stated by IZH in \cite{izhikevich2007dynamical}, a neuron fires a spike because it is near a transition called a bifurcation. In other words, when a neuron's membrane potential is activated, the neuron model exhibits a qualitative change in its dynamical behavior. Therefore, studying the spiking and bursting patterns in neuron maps provides valuable insight into the fundamental ways in which neurons communicate. Neuron maps are used to reproduce these behaviors and help us analyze the underlying nonlinear dynamics that give rise to the firing patterns. Different patterns (e.g., tonic spiking, periodic spiking, chaotic firing) are associated with distinct functional roles in the brain, including sensory processing, memory encoding, and rhythmic activity. Studying these patterns helps us to uncover the qualitative change in the dynamical effect of neuron information processing. In our study, we illustrated different spiking and bursting patterns, which are shown by the map $\mathcal{M}_{r,k}(x,\phi)$, as illustrated in Fig.\ref{spiking}. Different types of firing patterns are obtained by varying the parameters $k$ and $r$, while keeping the other parameter fixed as $k_0=0.1, k_1=0.1, $ and $ k_2=0.2$. 

\begin{figure}[ht!] 
    \centering
    \subfloat[Regular spike, $k=-0.5$, $r=0.2$]{%
        \includegraphics[width=0.5\textwidth,height=0.2\textwidth]{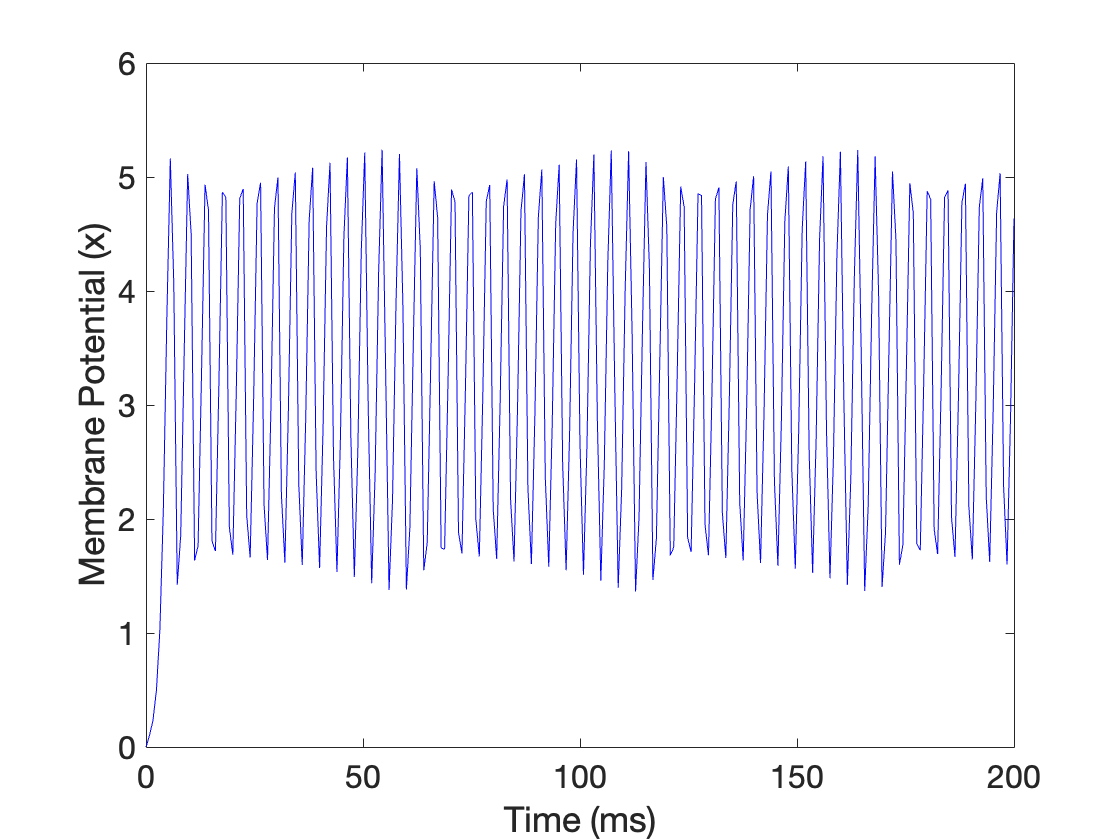}%
        }%
    %\hfill%
    \subfloat[Tonic spike, $k=-0.5$, $r=0.6$]{%
        \includegraphics[width=0.5\textwidth,height=0.2\textwidth]{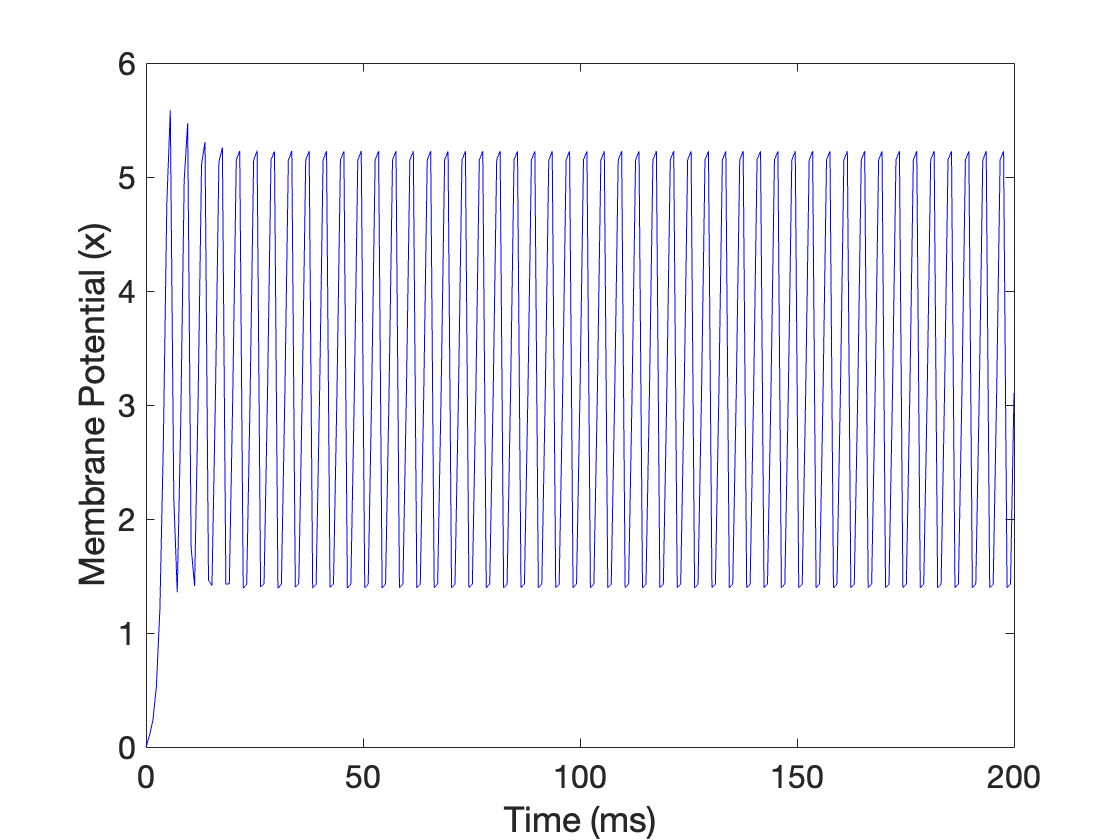}%
        }%
        \vfill%
    \subfloat[Chaotic Bursting, $k=-0.5$, $r=2.2$]{%
        \includegraphics[width=0.5\textwidth,height=0.2\textwidth]{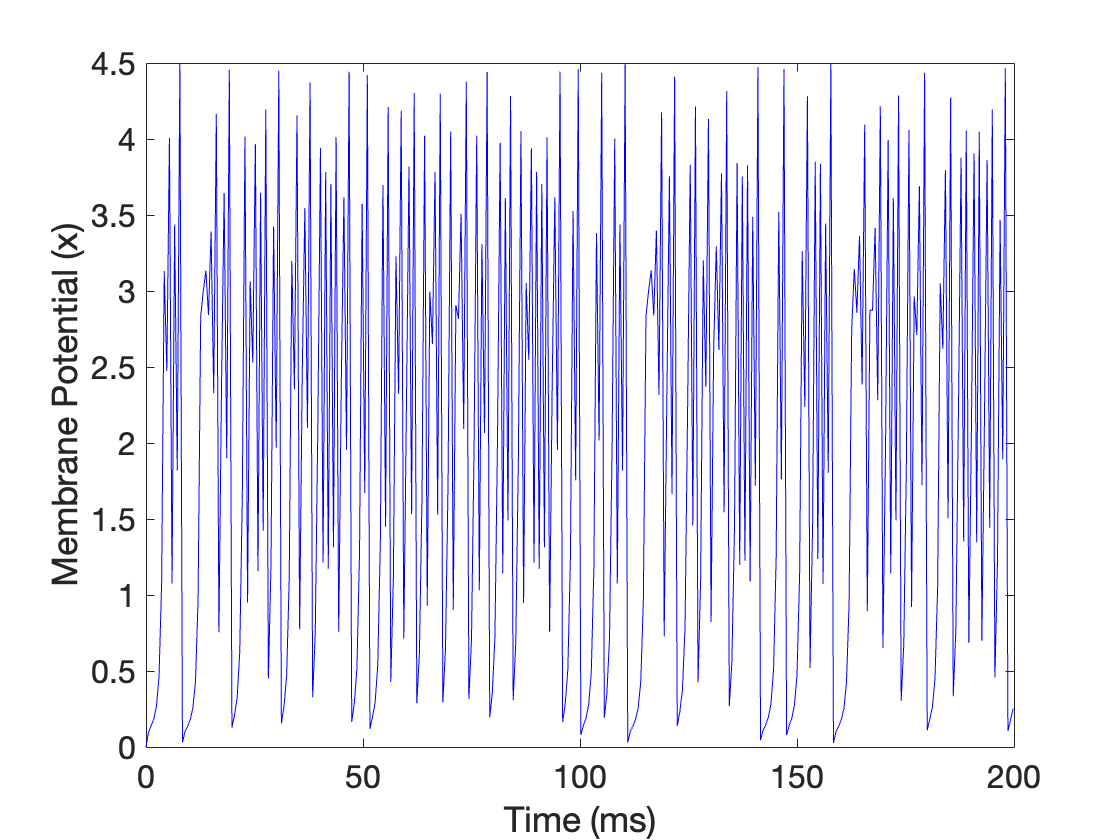}%
        }%
    %\hfill%
    \subfloat[Periodic Bursting, $k=0.1$, $r=3.9$]{%
        \includegraphics[width=0.5\textwidth,height=0.2\textwidth]{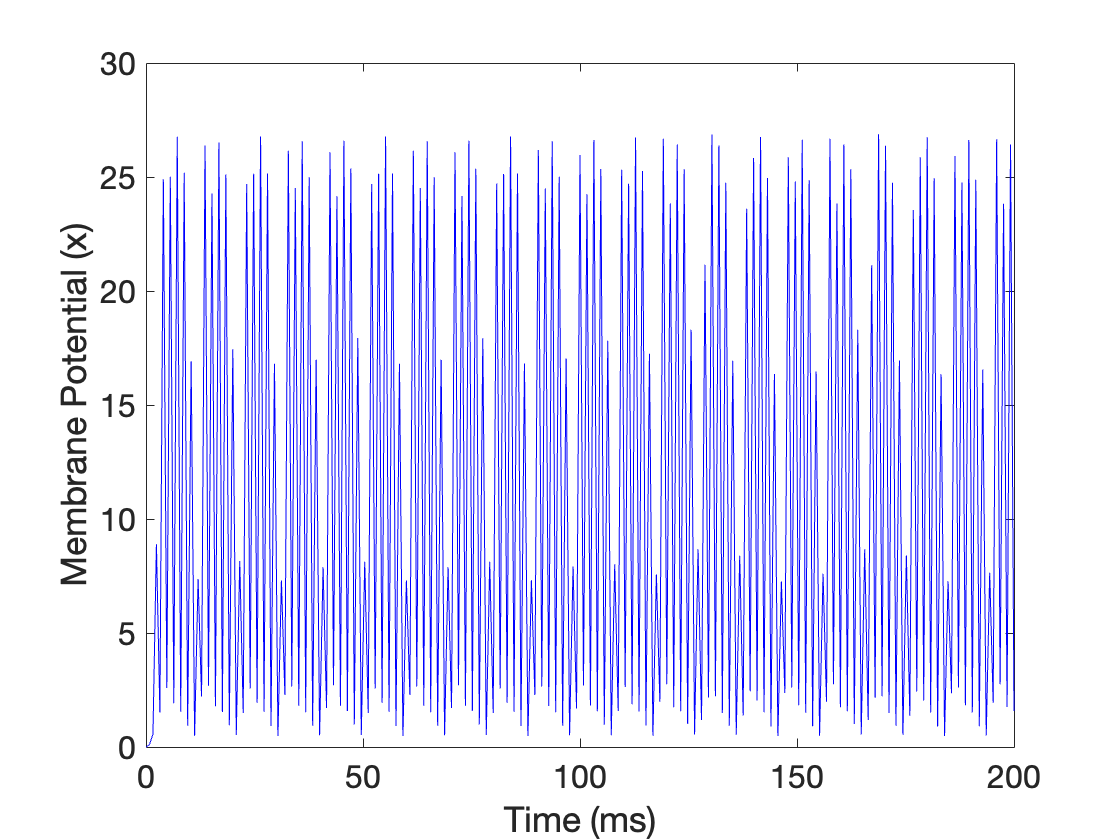}%
        }%
        \vfill
         \subfloat[Phasic Bursting, $k=1$, $r=1.4$]{%
        \includegraphics[width=0.5\textwidth,height=0.2\textwidth]{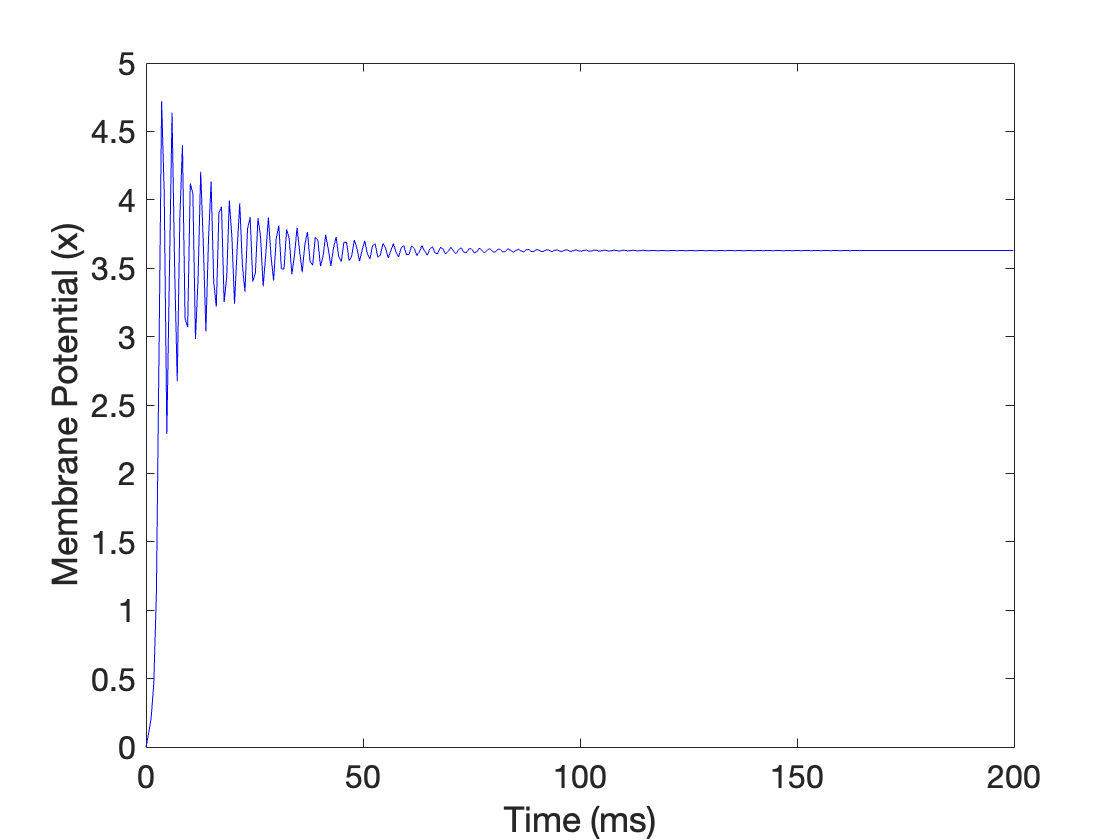}%
        }%
    \caption{Various firing patterns of the map $\mathcal{M}_{r,k}(x,\phi)$ w.r.to the parameters $k$ and $r$. Parameters are considered as: $k_0=0.1, k_1=0.1, $ and $ k_2=0.2$.}\label{spiking}
\end{figure}

\noindent In Fig. \ref{spiking}(a), we witness the regular spiking of the map $\mathcal{M}_{r,k}(x,\phi)$ for the flux parameter $k=-0.5$ and the recovery parameter $r=0.2$. With an increase in the recovery parameter at $r=0.6$ in Fig. \ref{spiking}(b), we observe the tonic spiking patterns followed by some high membrane potential spikes initially.  Furthermore, the bursting patterns are observed in the map by increasing the parameter $r=2.2$, i.e., the system exhibits chaotic bursting behavior for the parameters in Fig. \ref{spiking}(c). With an increase in flux parameter $ k=0.1$ and for the recovery parameter $r=3.9$ in Fig. \ref{spiking}(d), a periodic bursting pattern of the map is observed. Finally, with a further increase in the flux parameter $ k=1$, and for the recovery parameter $r=1.4$ in Fig. \ref{spiking}(e), the system exhibits phasic bursting patterns. 

Hence, different firing patterns for the map $\mathcal{M}_{r,k}(x,\phi)$ can be achieved by varying the parameters $k$ and $r$. Initially, we observe regular spiking patterns; as we continuously increase the parameters, we observe chaotic bursting, then periodic bursting, and finally return to phasic bursting.

%%-----------------------------------------

\section{Chaotic attractor and correlation dim}\label{chaotic_attractor}

In this section, we examine the chaotic attractor of the map $\mathcal{M}_{r,k}(x,\phi)$ with respect to the parameter $k$. Although chaotic behavior appears to be unpredictable, it has a fractal geometric structure, indicating that the system's evolution is bounded and governed by a complex yet deterministic rule, rather than pure randomness. To capture the intrinsic unpredictability of systems sensitive to initial conditions, the study of chaotic attractors is particularly helpful. In this study, we pick different discrete values of flux coupling parameter $k$ and obtain the chaotic attractors of the map $\mathcal{M}_{r,k}(x,\phi)$, as shown in Fig.\eqref{chaotic_attractors}. For $k=1.61$, the system exhibits the initial stage of the chaotic attractor. As the parameter $k$ increases, the chaotic attractor also evolves and becomes bigger and denser. For the parameter $k=1.748$, we observe the fully evolved form of the chaotic attractor for the map $\mathcal{M}_{r,k}(x,\phi)$.

The map exhibits a bounded but non-periodic trajectories, confirming the existence of a compact invariant set with sensitive dependence on initial conditions. Unlike purely stochastic signals, the generated attractors posses a similar fractal geometry, which we verify visually and quantify using the correlation dimension. It is a technique that is used to determine the chaotic attractor's fractional dimension, with respect to the fine range of flux parameter $k$. The calculated correlation dimension is illustrated in Fig. \ref {corr_dim}. For the same discrete values of parameter $ k$ as in Fig. \ref{chaotic_attractors}, the calculated correlation dimension is tabulated in Table \ref{Table_correlation}. This analysis provides the quantitative evidence that the attractor complexity increase in magnitude relative to parameter variation, rather than collapsing and escaping. The chaotic attractors present in the study resemble the shape of a balloon, so we refer them as \textit{Balloon chaotic attractors}.

\begin{figure}[ht!] 
    \centering
    \subfloat[$k=1.61$]{%
        \includegraphics[width=0.5\textwidth,height=0.3\textwidth]{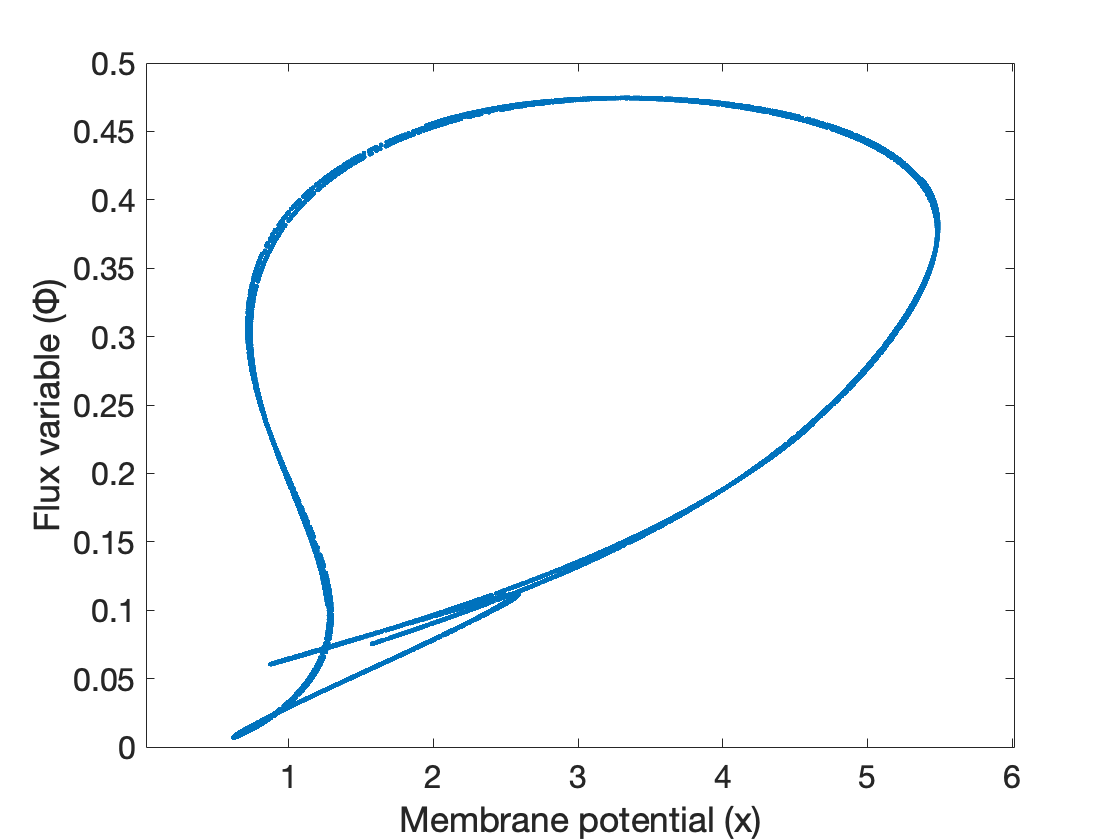}%
        }%
    %\hfill%
    \subfloat[$k=1.62$]{%
        \includegraphics[width=0.5\textwidth,height=0.3\textwidth]{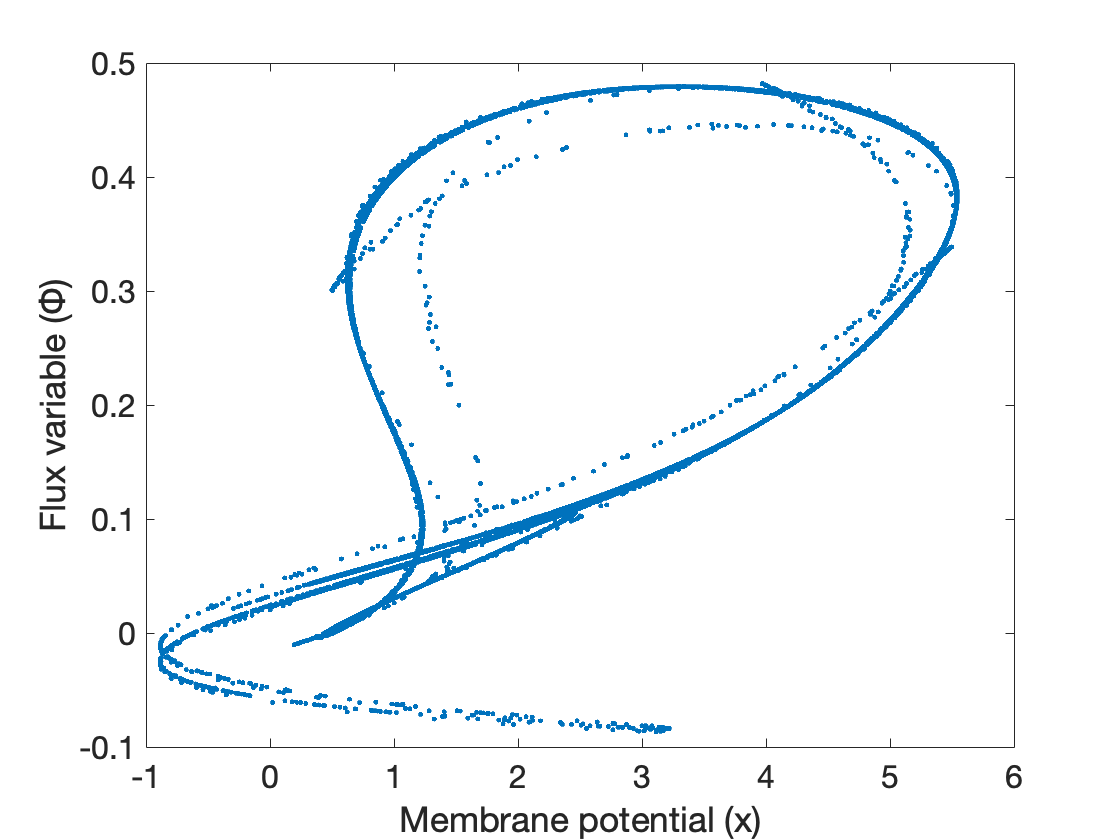}%
        }%
        \vfill
         \subfloat[$k=1.698$]{%
        \includegraphics[width=0.5\textwidth,height=0.3\textwidth]{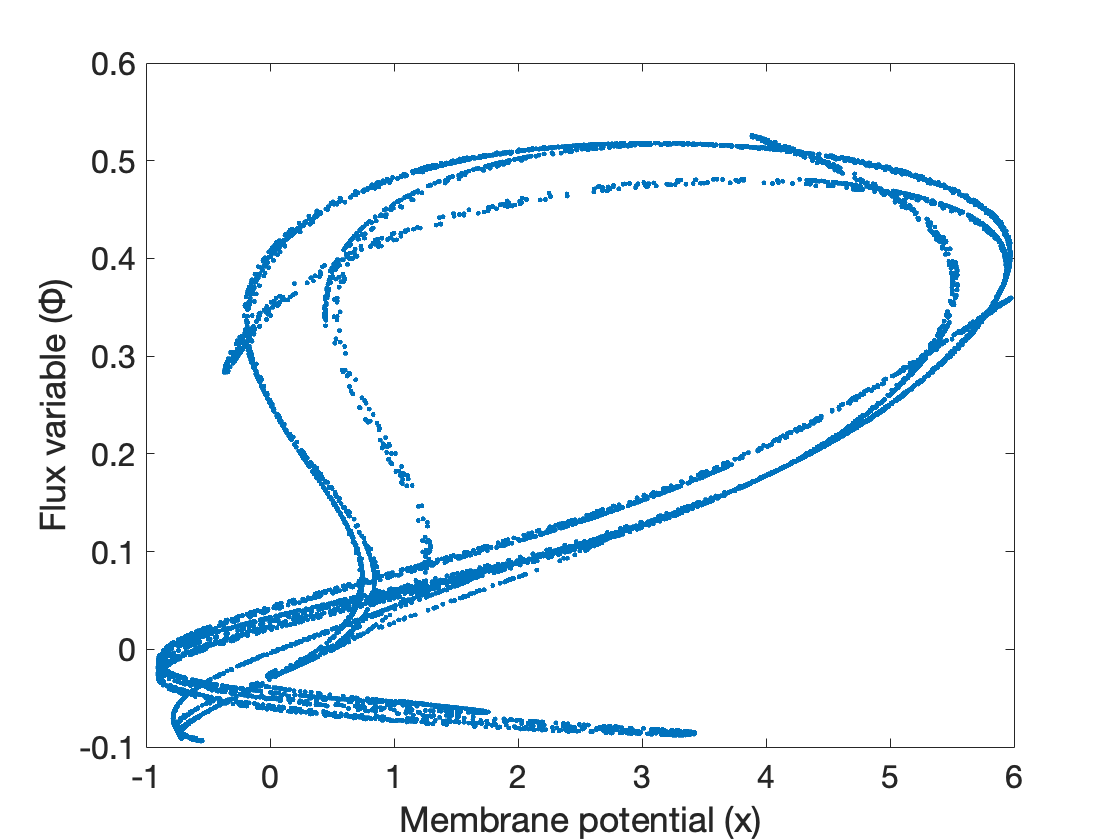}%
        }%
    %\hfill%
    \subfloat[$k=1.748$]{%
        \includegraphics[width=0.5\textwidth,height=0.3\textwidth]{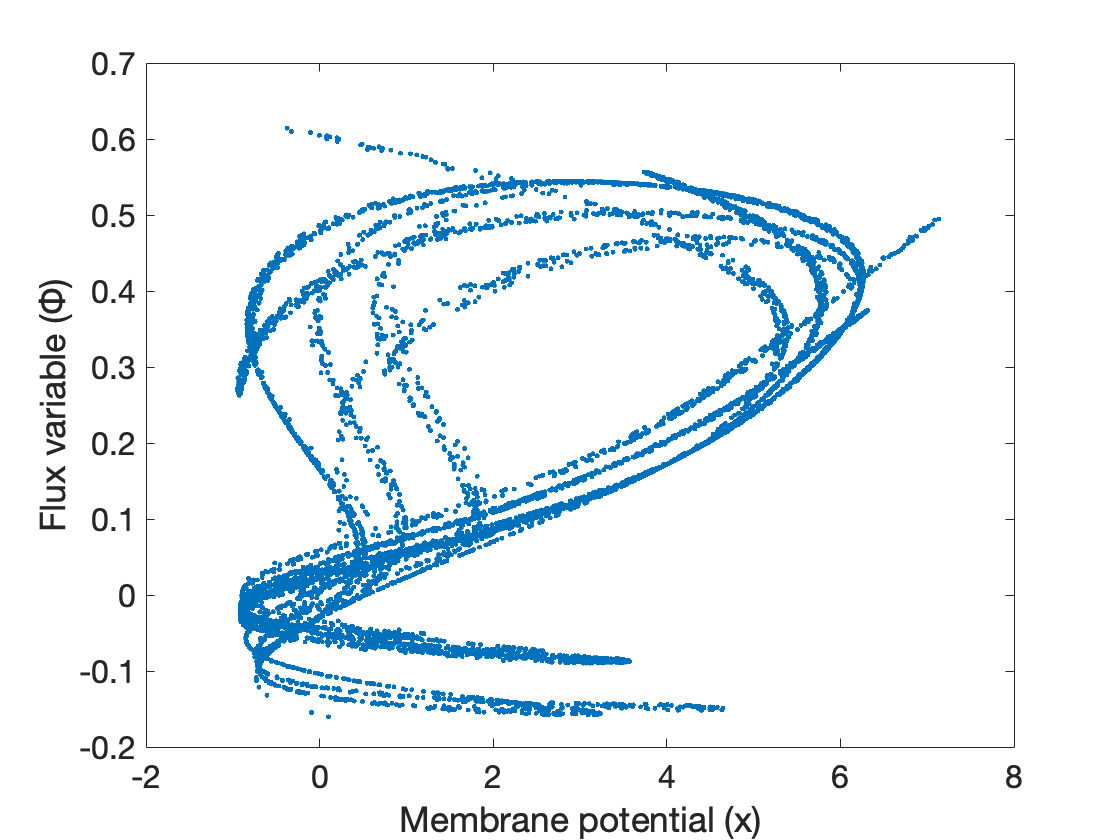}%
        }%
    \caption{Chaotic attractor of the map $\mathcal{M}_{r,k}(x,\phi)$ and its evaluation in four stages are shown.  Parameters are considered as: $k_0=-0.7, k_1=0.1, k_2=0.2$ and $ r=1.03$.}\label{chaotic_attractors}
\end{figure}

\begin{figure}[ht!] 
    \centering
    \subfloat[]{%
        \includegraphics[width=0.7\textwidth,height=0.4\textwidth]{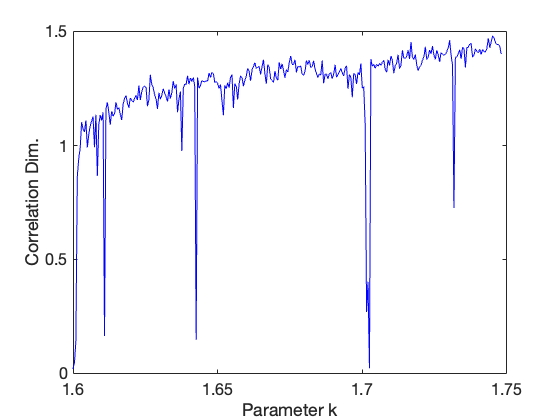}%
        }%
    \caption{Correlation dimension of the map $\mathcal{M}_{r,k}(x,\phi)$ with respect to the parameter $k$. Parameters are considered as: $k_0=-0.7, k_1=0.1, k_2=0.2,$ and $ r=1.03$.}\label{corr_dim}
\end{figure}

% \vspace{-5cm}
\begin{table}[ht!]
\centering
\caption{Corr. dim. of the map $\mathcal{M}_{r,k}(x,\phi)$ for discrete values of parameter $k$}
\vline\begin{tabular}{||c||c||}
\hline
\textbf{Parameter $k$} & \textbf{Correlation Dim.}\\
\hline
$1.61$ & $1.1705$ \\
 \hline
$1.62$ & $1.1882$\\ 
\hline
$1.698$ &  1.3310 \\ 
\hline
$1.748$ & 1.4358  \\ 
\hline
\end{tabular}\vline\label{Table_correlation}
\end{table}
 % \vspace{-5cm}

%%----------------------------
% \newpage 

\section{Ring-star network } After examining the dynamics of a single neuron under the electromagnetic radiation, we extend our analysis to a network of neurons. Recent studies illustrate the coexistence of both states, meaning the coexistence of synchronized and unsynchronized states in a network of neurons, as well as different spatiotemporal patterns \cite{muni2022discrete,ashwin2025global,radushev2026metric}. Investigating the network behavior of neurons is crucial for understanding various types of spatiotemporal patterns, including synchronized, unsynchronized, chimera states, and traveling waves. A network of neurons using a ring-star setup, which is governed by the map $\mathcal{M}_{r,k}(x,\phi)$, is shown in Fig. \ref{circle}.

\begin{figure}[ht!] 
    \centering
    \subfloat[]{%
        \includegraphics[width=0.6\textwidth]{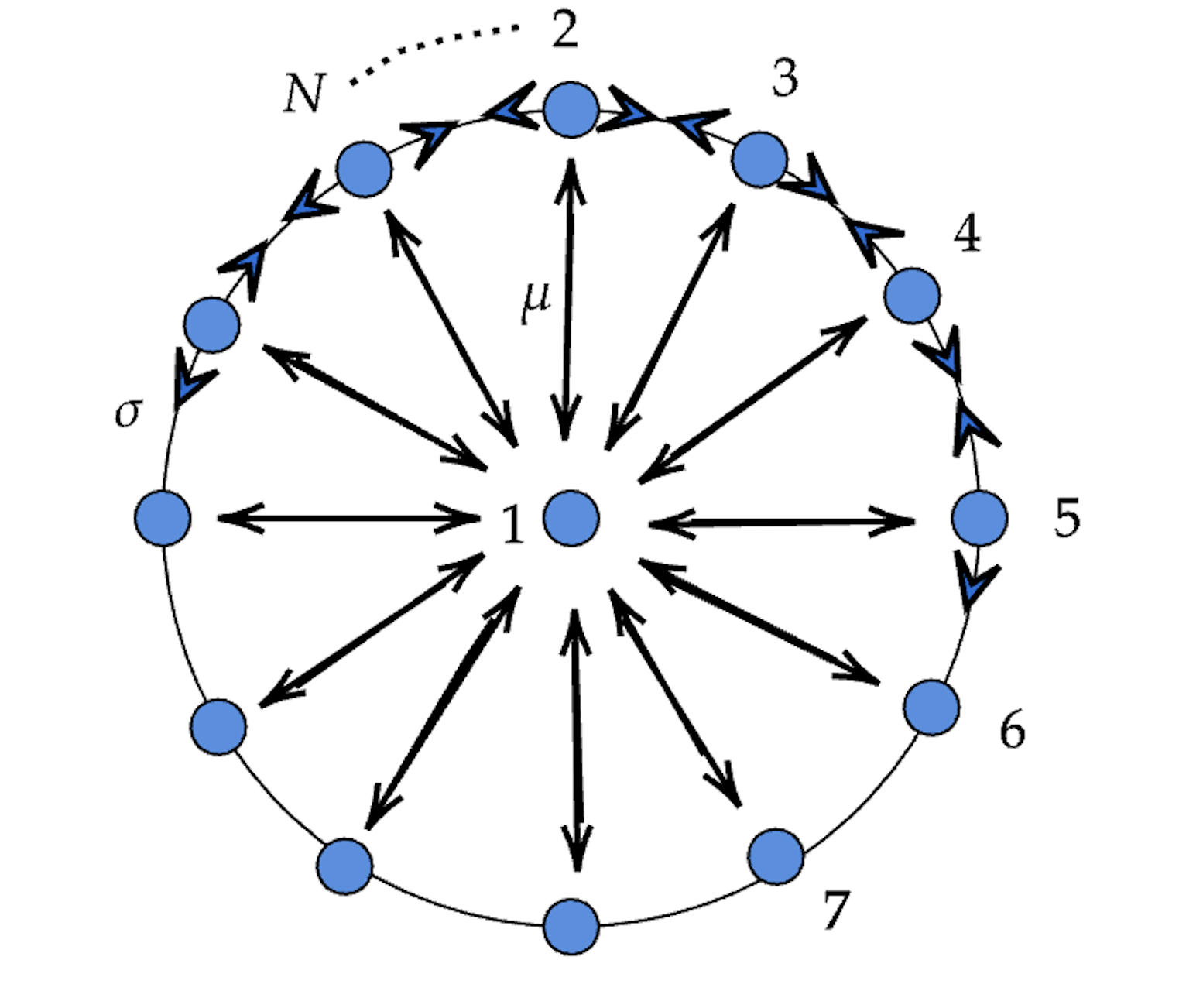}%
        }%
    \caption{A structure diagram of a neuron's network under the ring-star topology, governed by the $\mathcal{M}_{r,k}(x,\phi)$. }\label{circle}
\end{figure}

\noindent The parameters $\sigma$ and $\mu$ represent the coupling strength of the ring and star configuration, respectively, and each node $m=1,2,..., N$ in this network is the map $\mathcal{M}_{r,k}(x,\phi)$. The double-sided arrow shows the bidirectional connection between neurons in the ring-star network. The mathematical formation of this ring-star neuron's network is as follows:
\begin{align}
   \begin{split}
    x_{m}(n+1)&=x_m^2(n)e^{\left(r-x_m(n)\right)}+k_0+kx_m(n)\;\{\cos(\pi\phi_m(n))\}+\mu[x_m(n)-x_1(n)]\\
    &+\frac{\sigma}{2R}\sum_{i=m-R}^{m+R}[x_i(n)-x_m(n)],\\
    \phi_{m}(n+1)&=k_1x_{m}(n)-k_2\phi_{m}(n).
   \end{split}\label{general}
\end{align}

\noindent The equation of the central node is obtained from the above equation as follows:
\begin{align}\label{central}
\begin{split}
    x_{1}(n+1)&=x_1^2(n)e^{\left(r-x_1(n)\right)}+k_0+kx_1(n)\{\cos(\pi\phi_1(n))\}+\mu\sum_{i=1}^{N}[x_i(n)-x_{1}(n)],\\
    \phi_{1}(n+1)&=k_1x_{1}(n)-k_2\phi_{1}(n).
    \end{split}
\end{align}

\noindent With the boundary conditions as:
\begin{align}\label{condition}
\begin{split}
x_{m+N}\;(n)&=x_{m}(n),\\
\phi_{m+N}\;(n)&=\phi_{m}(n).
\end{split}
\end{align}

\noindent In the network equations, the parameter $\mu$ denotes the coupling strength of the star network, while $\sigma$ represents the coupling strength of the ring network. The symbol $R$ denotes the number of neighbor nodes, and the network size considered for the simulations is $ N = 100$. For this analysis, we consider the three cases by varying the coupling parameters. In the first case for the ring network, the star-network coupling strength is considered zero (i.e., $\mu=0$), and the parameter $\sigma$ is the control parameter in this setup. In the second case, we use the nonzero coupling parameter to investigate the ring-star behavior, where both coupling parameters are decisive factors. Lastly, we take the ring coupling parameter zero (i.e., $\sigma=0$) to study the star network in which $\mu$ is the central parameter of the network. We set the coupling range $R=10$ throughout this paper and consider the random initial condition for the network study. In each figure of this section, each figure present three complementary view of network: $(i)$ rightmost shows the change in membrane potential of neurons over the time (i.e., how the current in membrane potential is change with time), $(ii)$ the middle one displays the final membrane potential values of all neuron in the network (those value of membrane potential which we find at the end time state from rightmost), $(iii)$ the leftmost shows the relation between nodes to each other (i.e., the relation between the end-state membrane potential value shown in middle one).

Note that a color bar is provided in the leftmost and rightmost subfigures in this section. On the leftmost, the color bar indicates the absolute difference between the neuron's end-state membrane potential, while the rightmost indicates the absolute membrane potential for each neuron over time. Moreover, we use a random initial condition in the network study of the map $\mathcal{M}_{r,k}(x,\phi)$ and observe that the system exhibits non-trivial and more complex behavior.

% \newpage   
\subsection{Ring Network} 

\begin{figure}[ht!] 
    \centering
    \subfloat[]{%
        \includegraphics[width=0.37\textwidth]{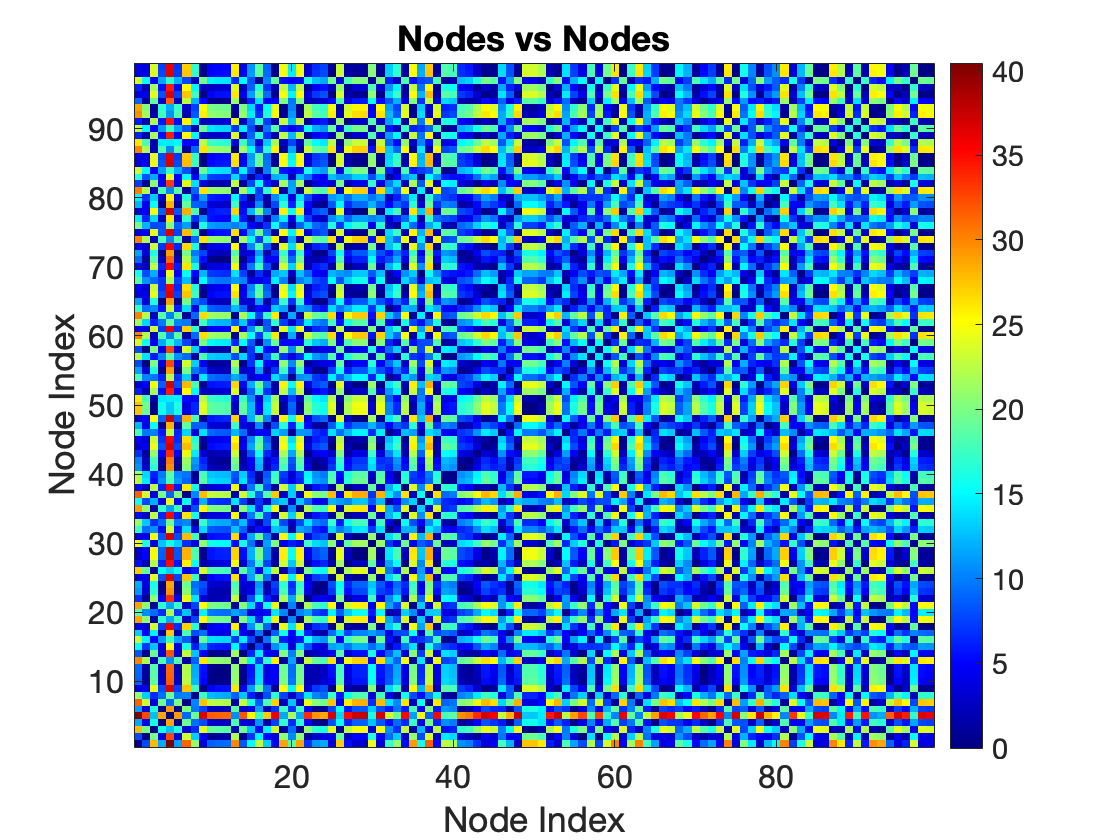}%
        }%
    %\hfill%
    \subfloat[\tiny{Unsynchronized state, $\sigma$=0.01}]{%
        \includegraphics[width=0.37\textwidth]{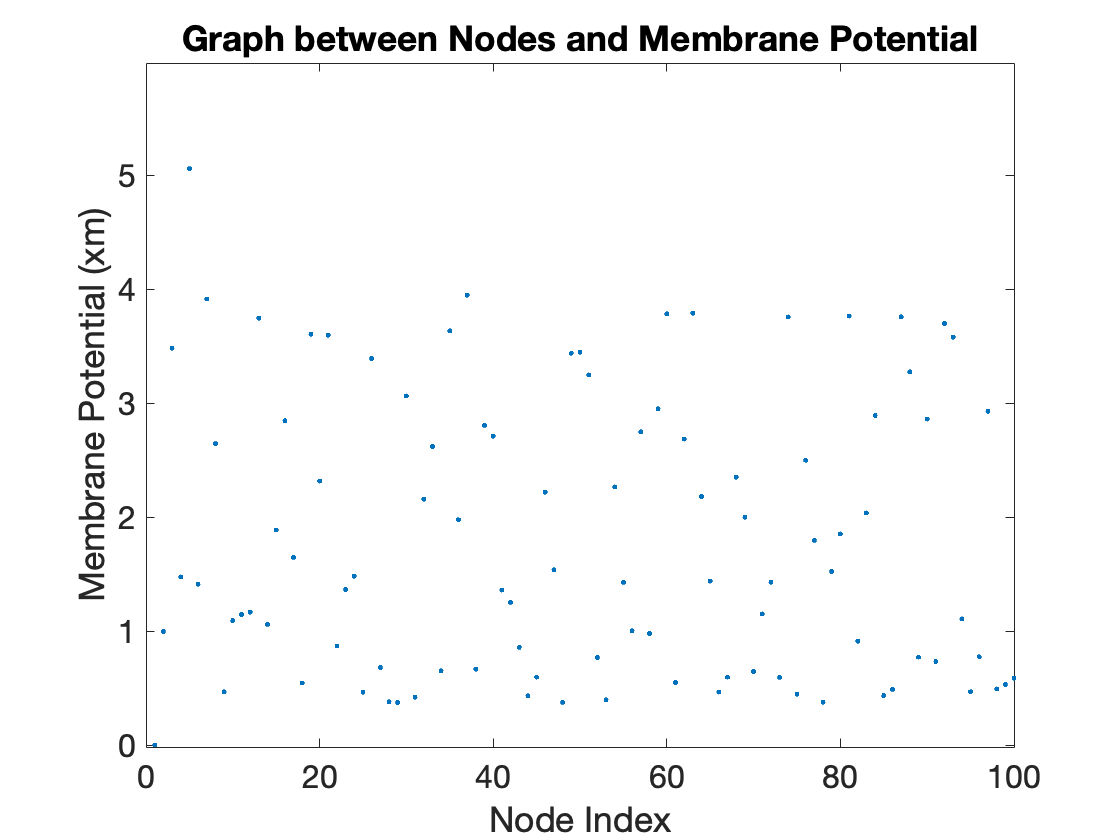}%
        }%
    %\hfill%
    \subfloat[]{%
        \includegraphics[width=0.37\textwidth]{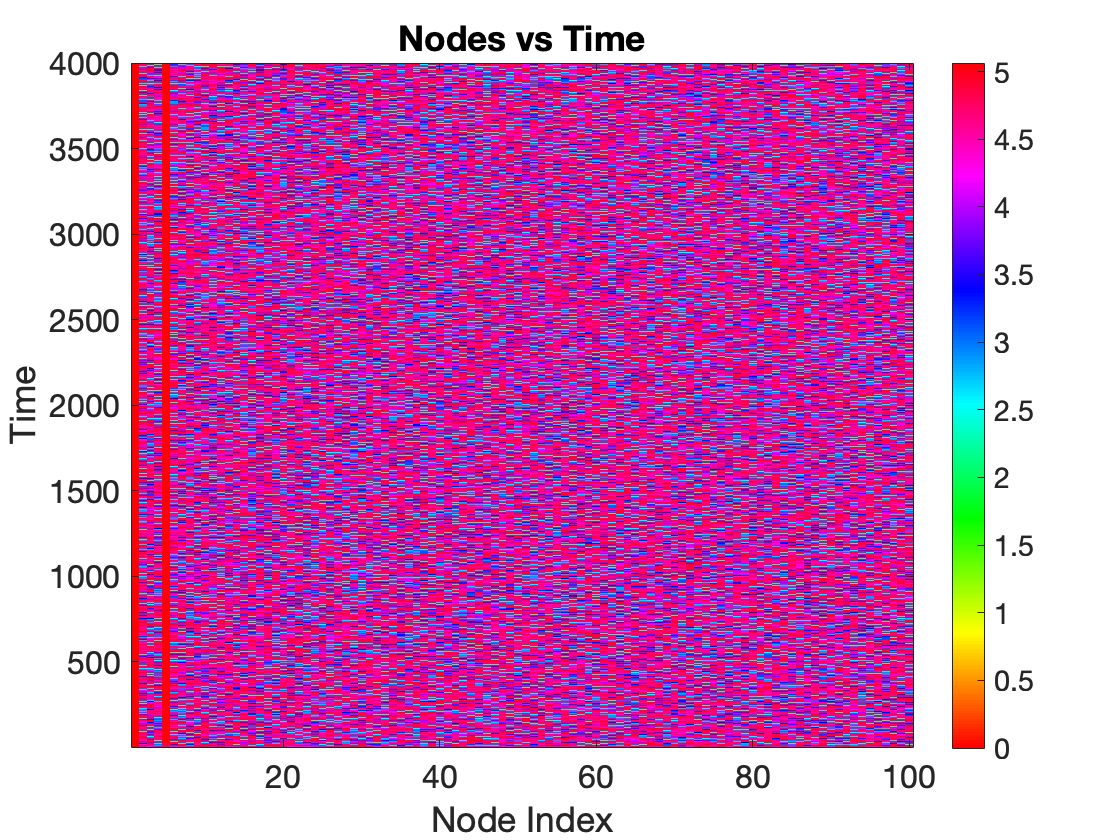}%
        }
        \vfill%
     \subfloat[]{%
        \includegraphics[width=0.37\textwidth]{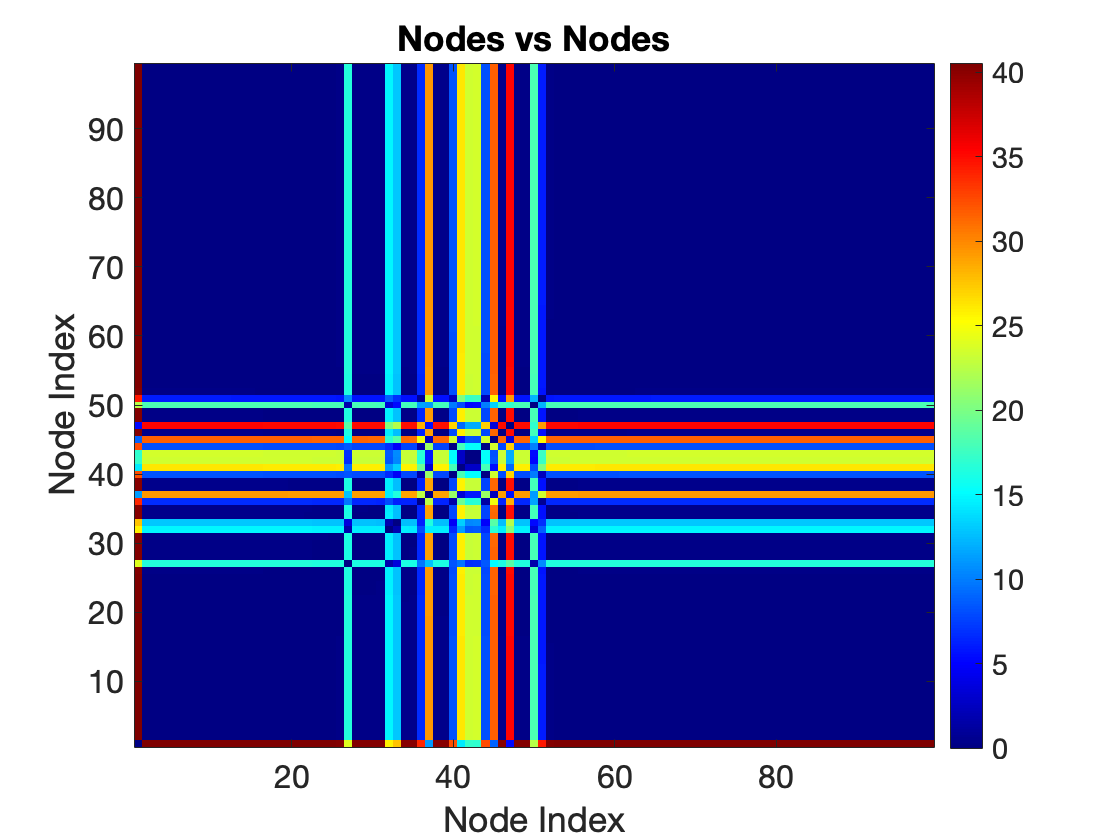}%
        }%
    %\hfill%
    \subfloat[\tiny{Chimera state, $\sigma$=0.057}]{%
        \includegraphics[width=0.37\textwidth]{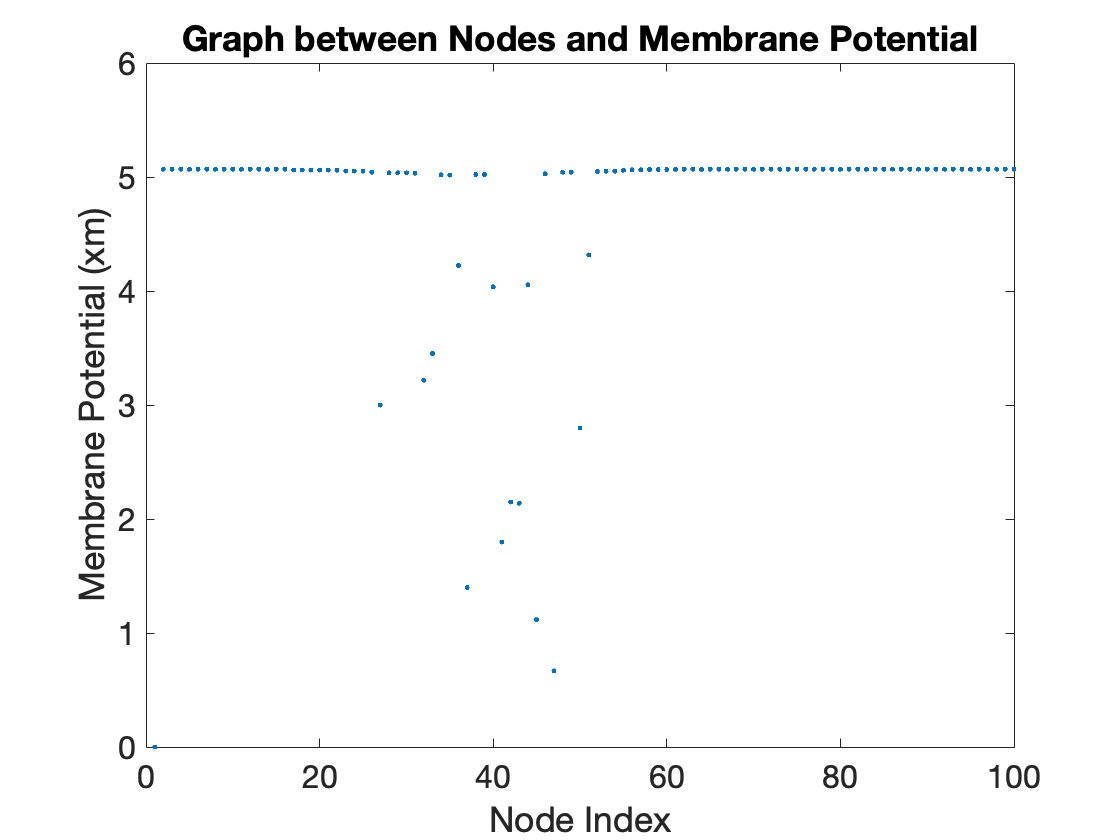}%
        }%
    %\hfill%
    \subfloat[]{%
        \includegraphics[width=0.37\textwidth]{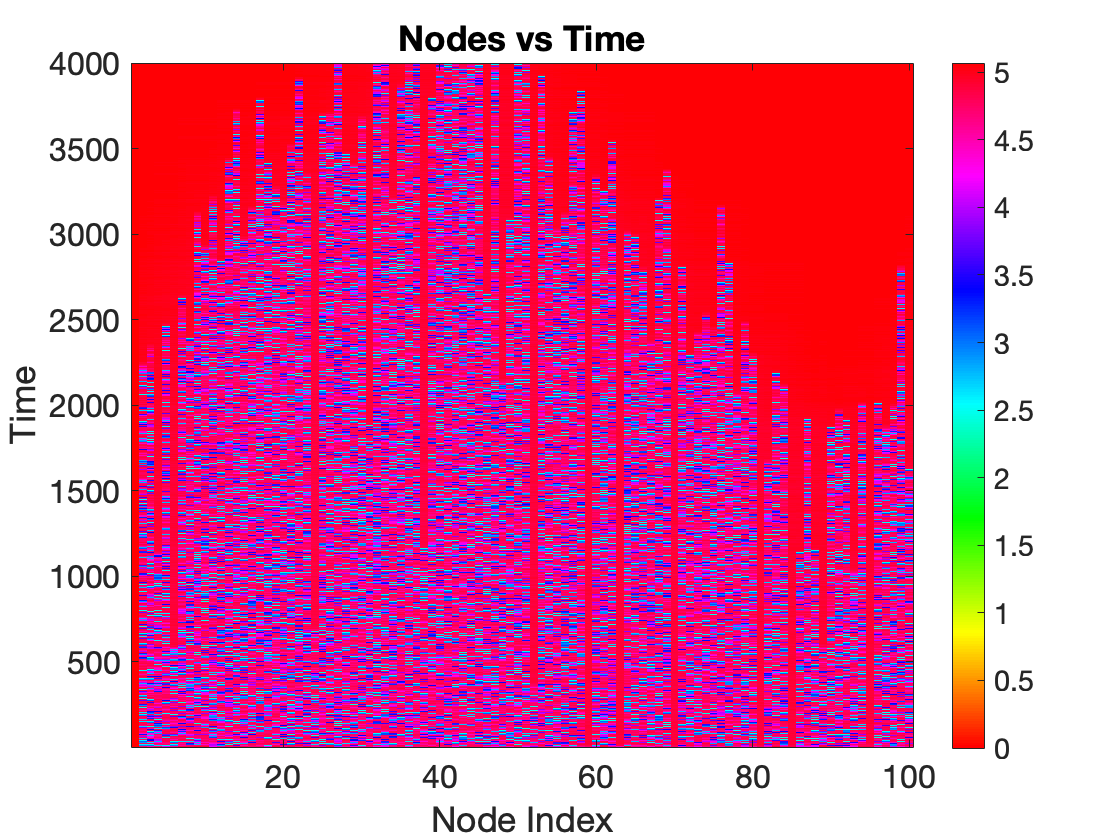}%
        }%
        \vfill%
     \subfloat[]{%
        \includegraphics[width=0.37\textwidth]{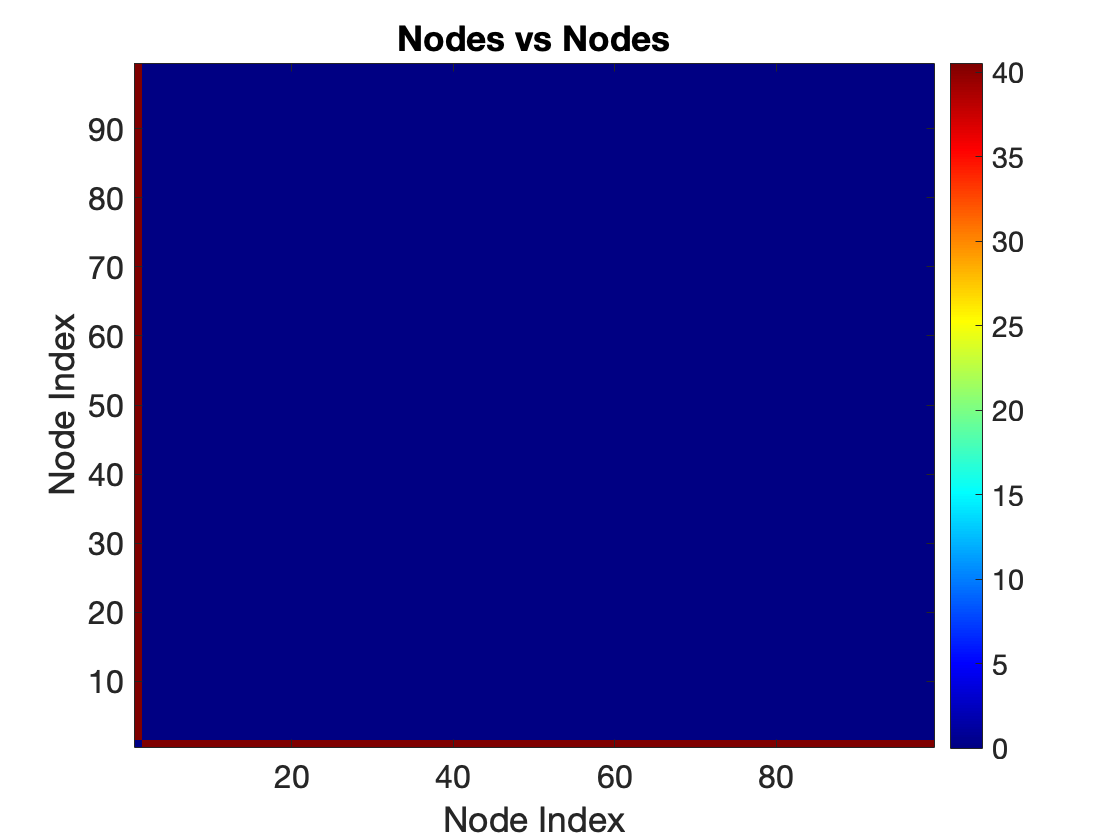}%
        }%
    %\hfill%
    \subfloat[\tiny{Synchronized state, $\sigma$=0.1}]{%
        \includegraphics[width=0.37\textwidth]{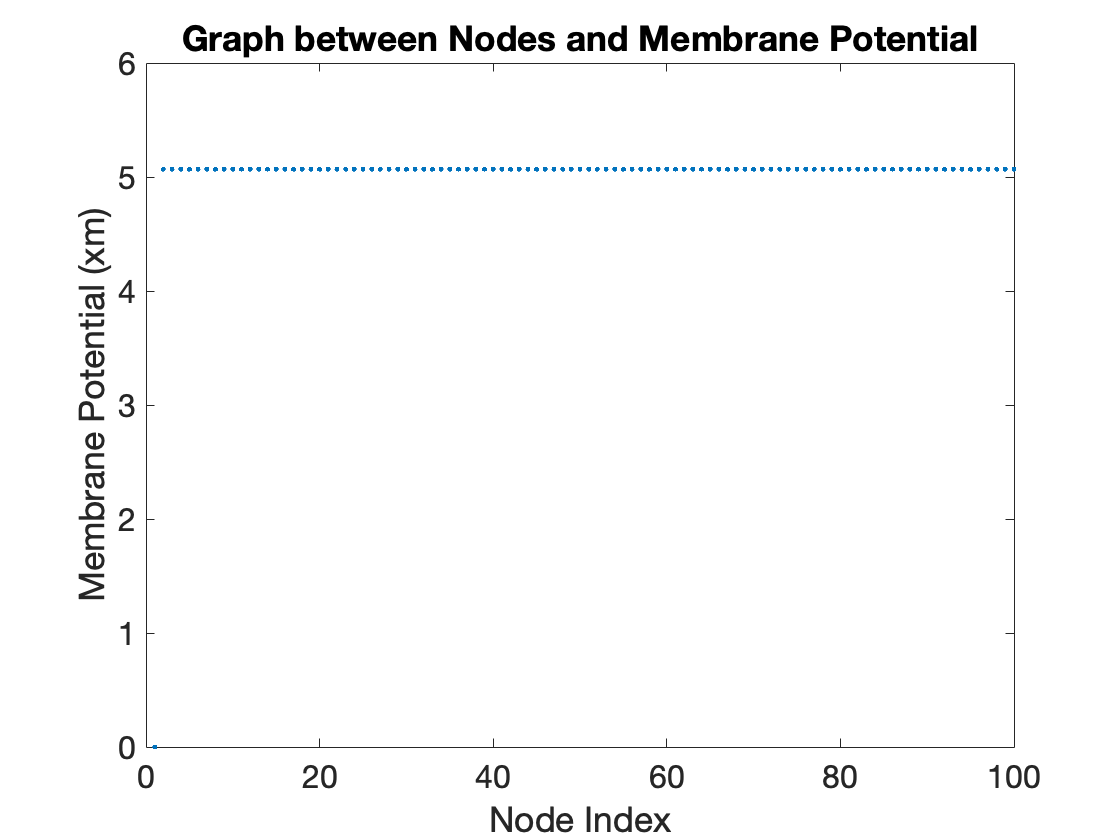}%
        }%
    %\hfill%
    \subfloat[]{%
        \includegraphics[width=0.37\textwidth]{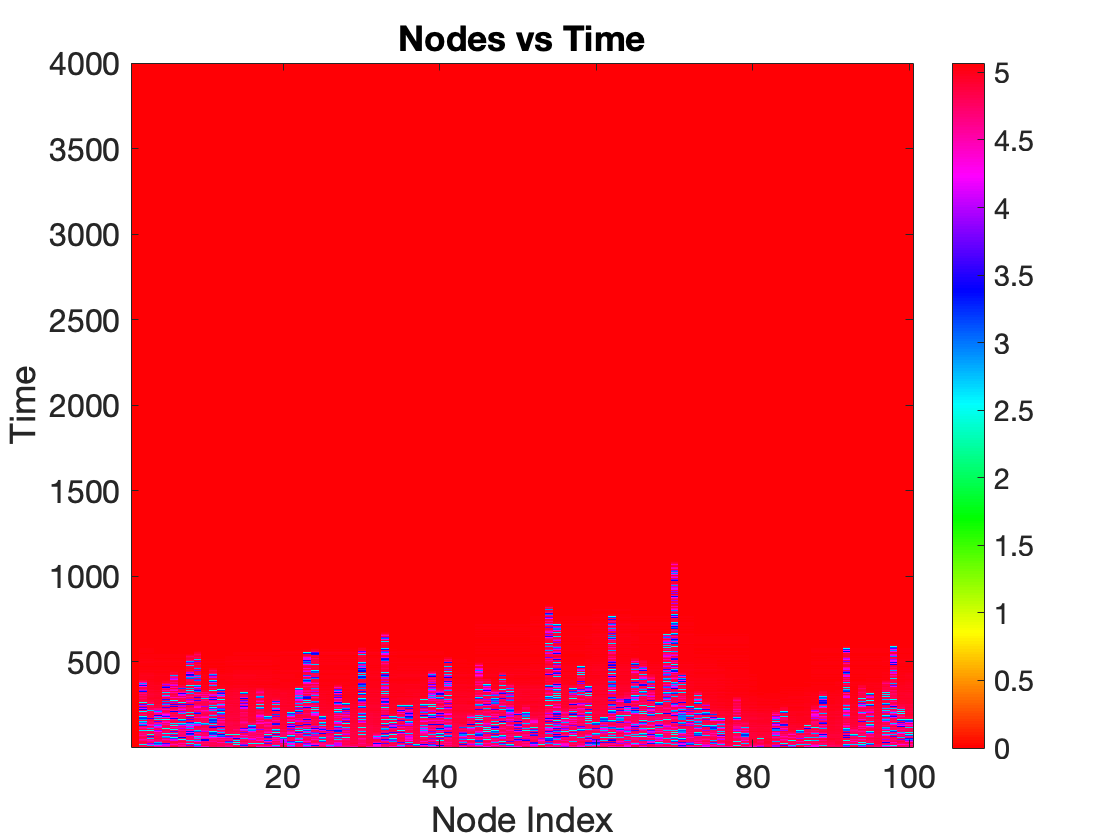}%
        }%
    \caption{Spatiotemporal patterns under the ring network of the map $\mathcal{M}_{r,k}(x,\phi)$ by varying $\sigma$. The first row showcases the unsynchronized behavior among neurons. The second row shows a chimera state. The third row shows the synchronized behavior. Parameters are considered as: $k_0 =2, k_1=0.3, k_2=0.5, k=-0.5,$ and $ r=1.2$}\label{ring}
\end{figure}

First, we examine the ring network in which there is no connection between outer nodes and the central node (i.e., $\mu=0$). So, $\sigma$ is the only controlling parameter in this network. By varying the parameter $\sigma$ from $0.01$ to $0.1$, we observe distinct spatiotemporal patterns, as illustrated in Fig.\ref{ring}. In the subfigure\ref {ring} first row  (i.e., (a),(b),(c)) for the parameter $\sigma=0.01$, we witness the unsynchronized behavior of the network of map $\mathcal{M}_{r,k}(x,\phi)$.  In subfigure \ref{ring} second row (i.e., (d),(e),(f)) for the parameter $\sigma=0.057$, a state between the synchronized and unsynchronized state is observed, which is called the chimera state. A chimera state is a phenomenon that is found in many places in the real world, such as dolphin sleeping patterns, where one eye is closed, and one eye is open (basically, in this situation, half of the brain is in a sleeping state and half of the brain is in an active state). So, we are interested in finding such a phenomenon in a neuron's network exhibited by the map $\mathcal{M}_{r,k}(x,\phi)$. Lastly, in the third-row subfigure \ref{ring} (i.e., (g),(h),(i)) for the value of parameter $\sigma=0.1$, a synchronized behavior is observed in the network dynamics.

% \newpage (i.e., a dynamical state where the continuous and piece-wise traveling wave both occur)

 % treating both coupling strengths $\mu$ and $\sigma$ as nonzero control parameters.

\subsection{Ring-star Network}

\begin{figure}[ht!] 
    \centering
    \subfloat[]{%
        \includegraphics[width=0.37\textwidth]{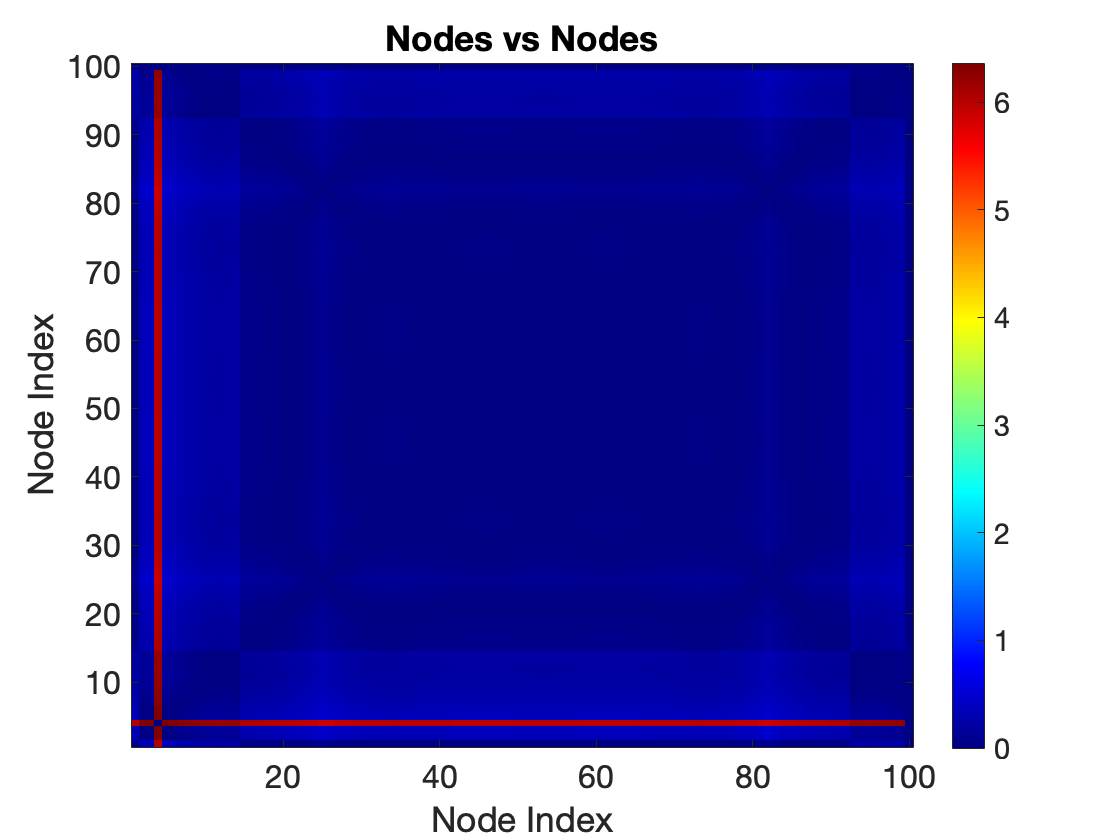}%
        }%
    %\hfill%
    \subfloat[\tiny{Continuous traveling wave, $\mu=0.00007, \sigma=0.16$}]{%
        \includegraphics[width=0.37\textwidth]{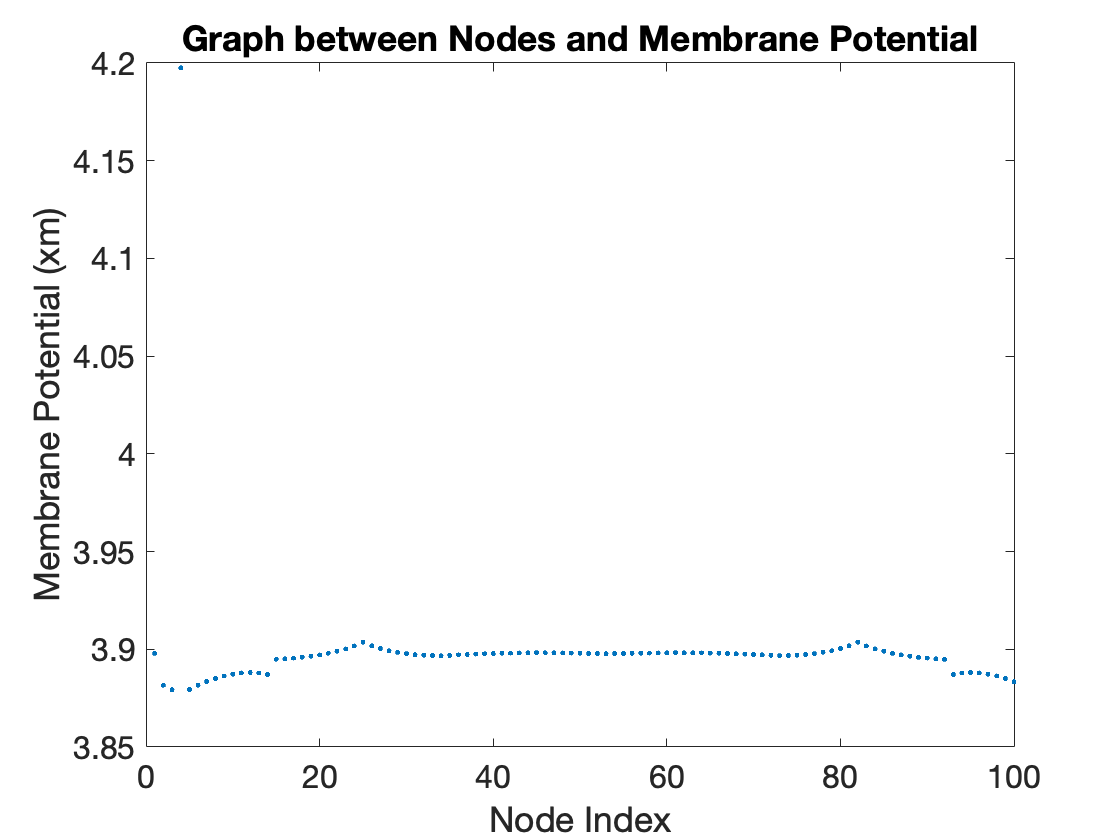}%
        }%
    %\hfill%
    \subfloat[]{%
        \includegraphics[width=0.37\textwidth]{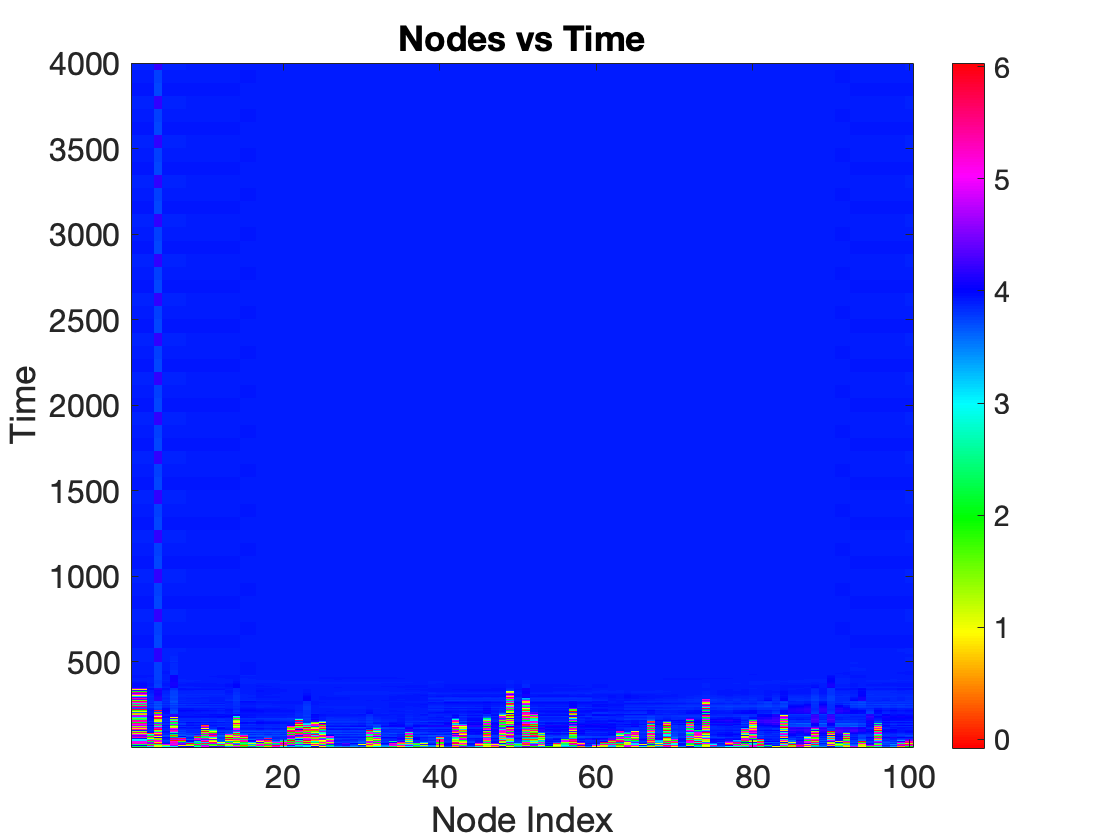}%
        }
        \vfill%
     \subfloat[]{%
        \includegraphics[width=0.37\textwidth]{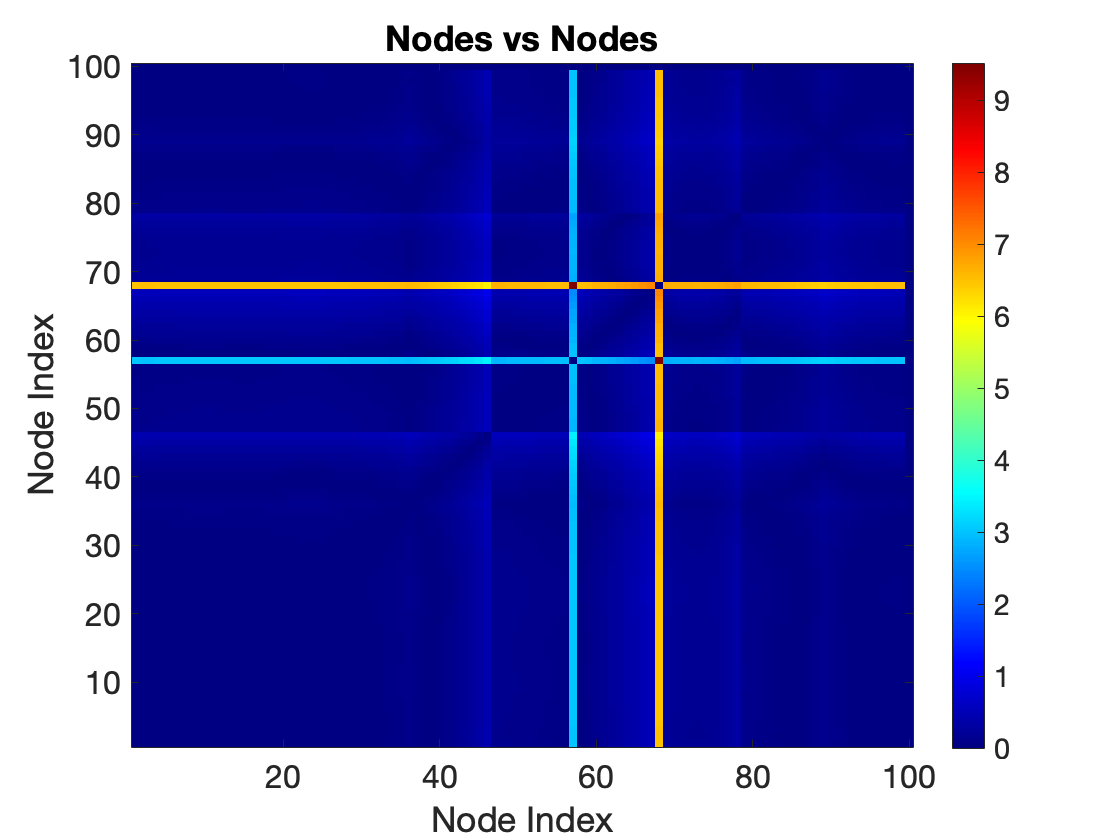}
        }%
    %\hfill%
    \subfloat[\tiny{Chimera state, $\mu$=0.00001, $\sigma$=0.15}]{%
        \includegraphics[width=0.37\textwidth]{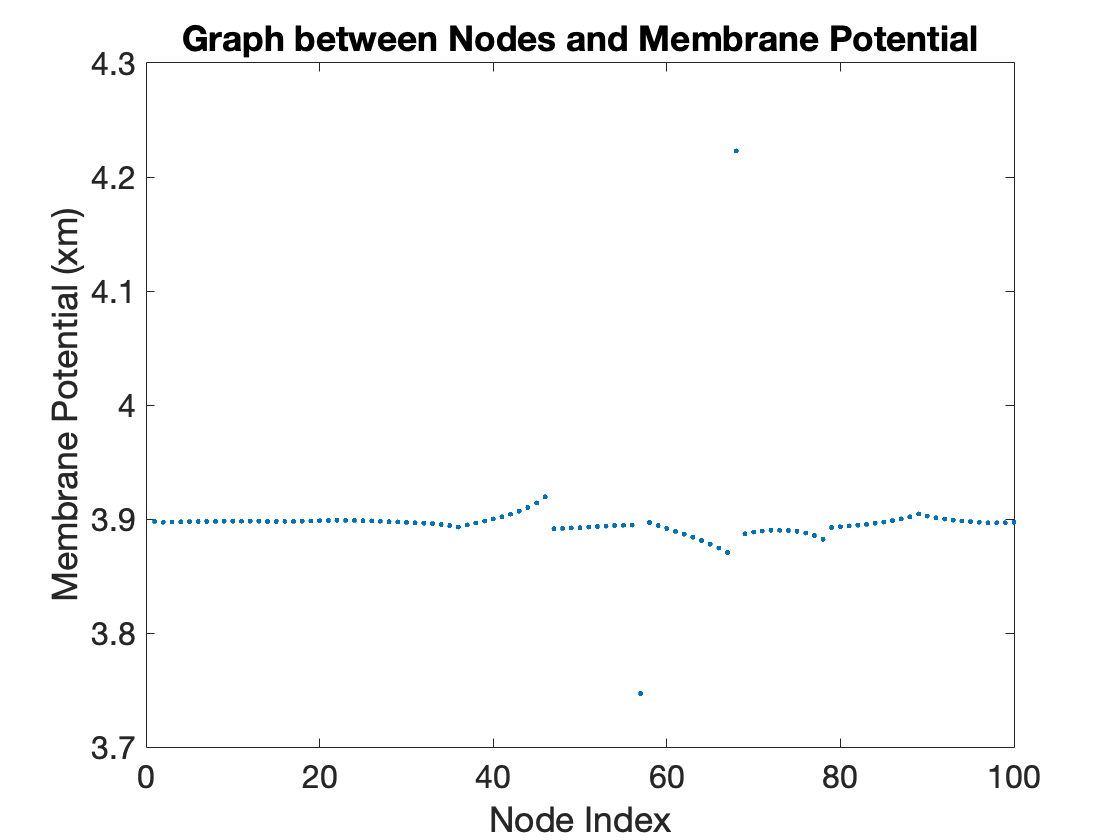}%
        }%
    %\hfill%
    \subfloat[]{%
        \includegraphics[width=0.37\textwidth]{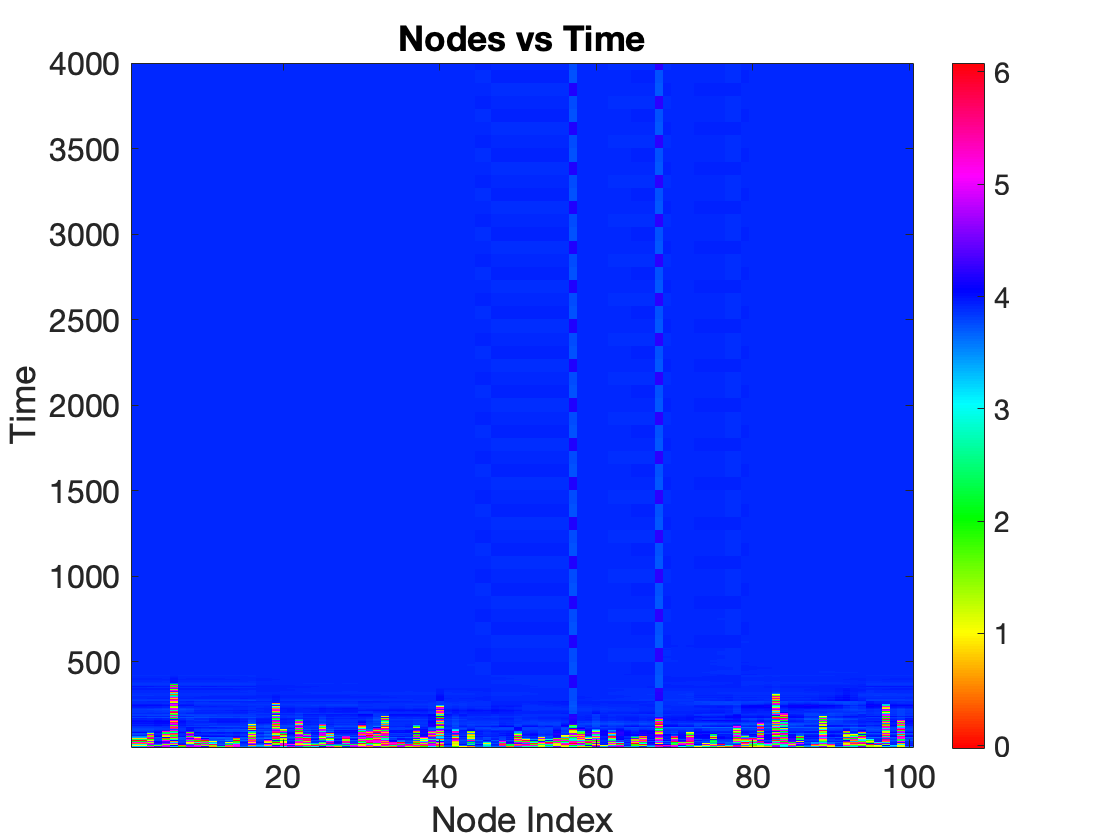}%
        }%
        \vfill%
     \subfloat[ ]{%
        \includegraphics[width=0.37\textwidth]{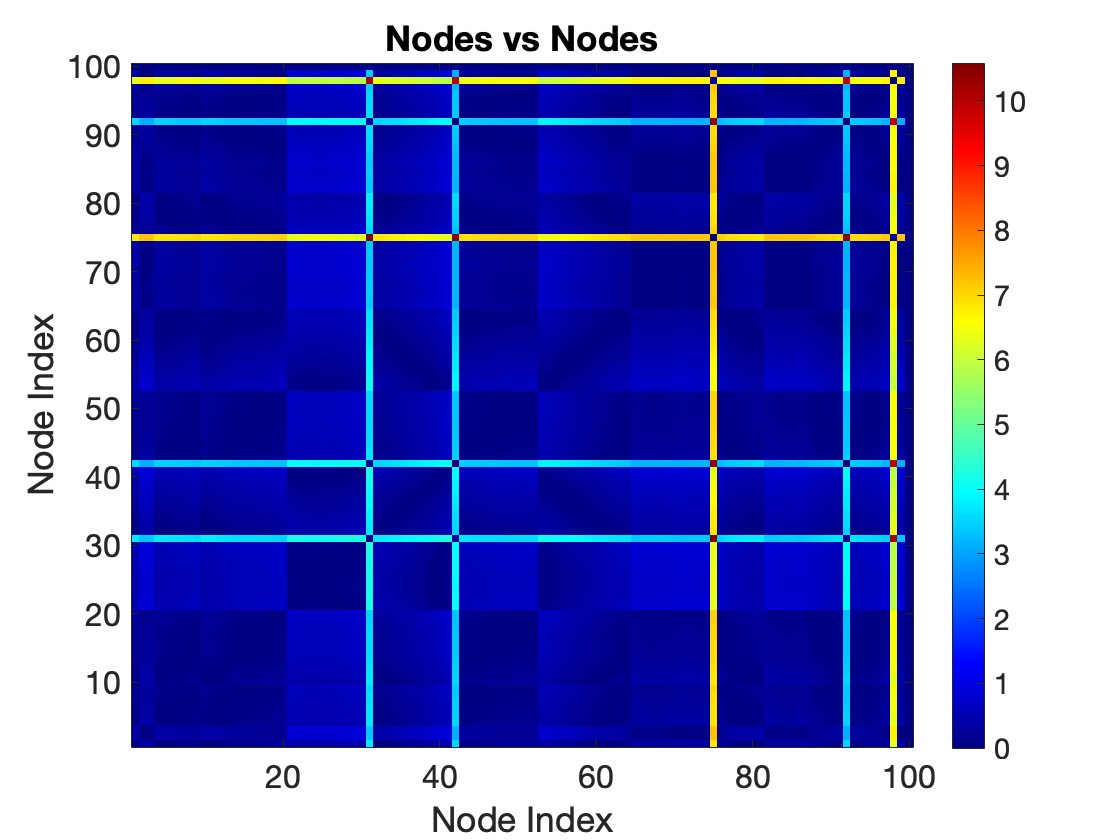}%
        }%
    %\hfill%
    \subfloat[\tiny{Piecewise traveling pattern, $\mu$=0.0001, $\sigma$=0.14}]{%
        \includegraphics[width=0.37\textwidth]{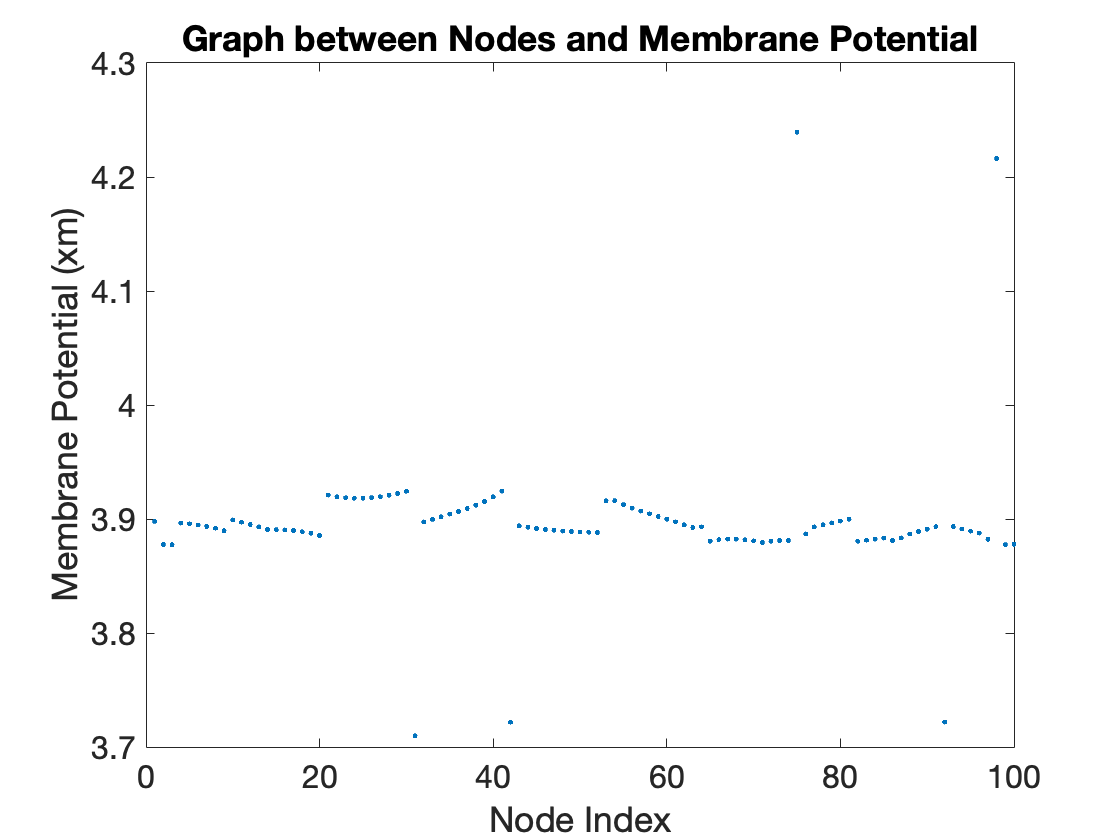}%
        }%
    %\hfill%
    \subfloat[]{%
        \includegraphics[width=0.37\textwidth]{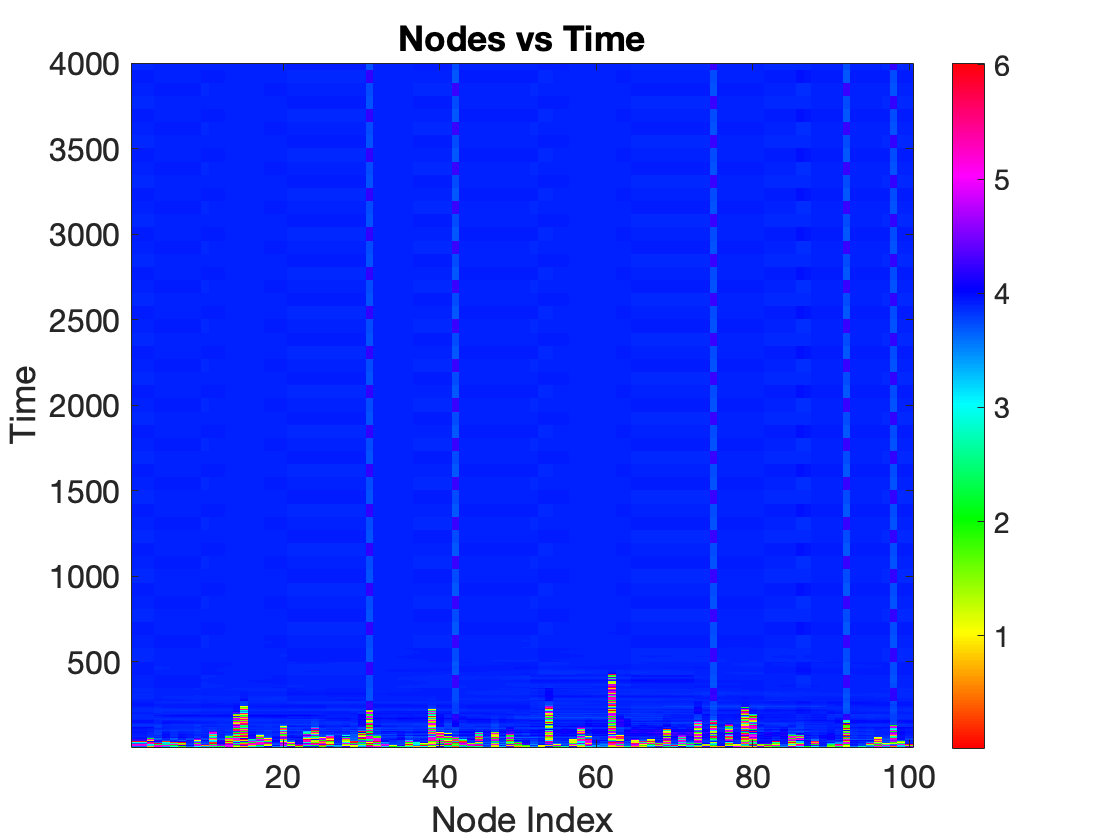}%
        }%
    \caption{Spatiotemporal patterns of the the map $\mathcal{M}_{r,k}(x,\phi)$ in a ring-star network with the variation in parameters $\mu$ and $\sigma$. The first row shows a continuous wave pattern.  The second row shows a chimera pattern. Finally, the third row shows that a piecewise pattern emerges among the nodes. Parameters are considered as: $k_0=0.7,  k_1=0.8215, k_2=-0.39, k=-0.2,$ and $ r=2.15.$}\label{ringstar}
\end{figure}

We investigate the ring-star network of the map $\mathcal{M}_{r,k}(x,\phi)$, treating both coupling strengths as control parameters (i.e., consider the coupling strength $\mu$ and $\sigma$ both non-zero in this case). With the variation in the coupling strength, we illustrate different spatiotemporal patterns, which are showcased in Fig.\ref{ringstar}. In the first row of Fig.\ref{ringstar} (i.e., (a),(b),(c)) for the parameter $\mu=0.00007 $ and $ \sigma=0.16$, the system showcased the continuous traveling wave patterns in the membrane potential of neurons.  In the second row of the Fig.\ref{ringstar} (i.e., (d),(e),(f)) for $\mu=0.00001$ and $ \sigma=0.15$, a chimera state is observed in the network of the map $\mathcal{M}_{r,k}(x,\phi)$. Lastly, in the third row of the Fig.\ref{ringstar} (i.e., (g),(h),(i)) for $\mu=0.0001$ and $ \sigma=0.14$, a piece-wise traveling pattern is observed in the ring-star network of the map $\mathcal{M}_{r,k}(x,\phi)$.

\subsection{Star Network}

\begin{figure}[p] 
    \centering
    \subfloat[]{%
        \includegraphics[width=0.37\textwidth]{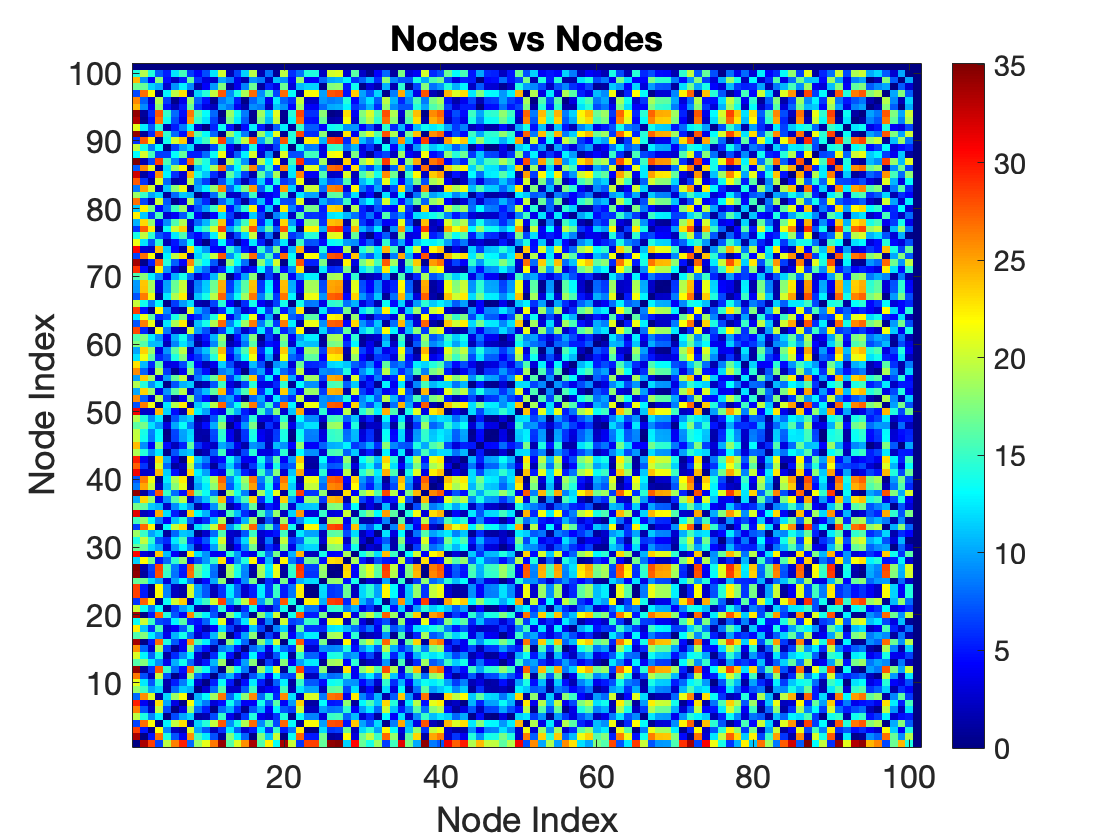}%
        }%
    %\hfill%
    \subfloat[\tiny{Unsynchronized state, $\mu$=0.001, $r$=1.2}]{%
        \includegraphics[width=0.37\textwidth]{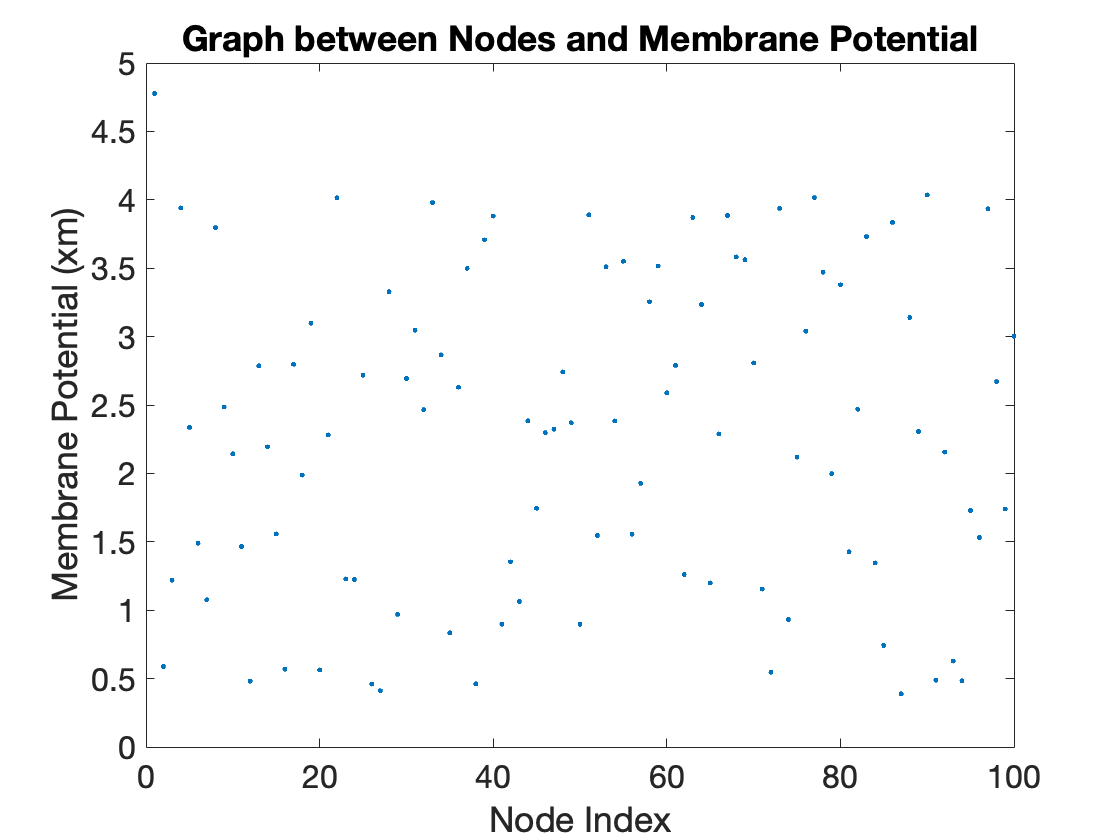}%
        }%
    %\hfill%
    \subfloat[]{%
        \includegraphics[width=0.37\textwidth]{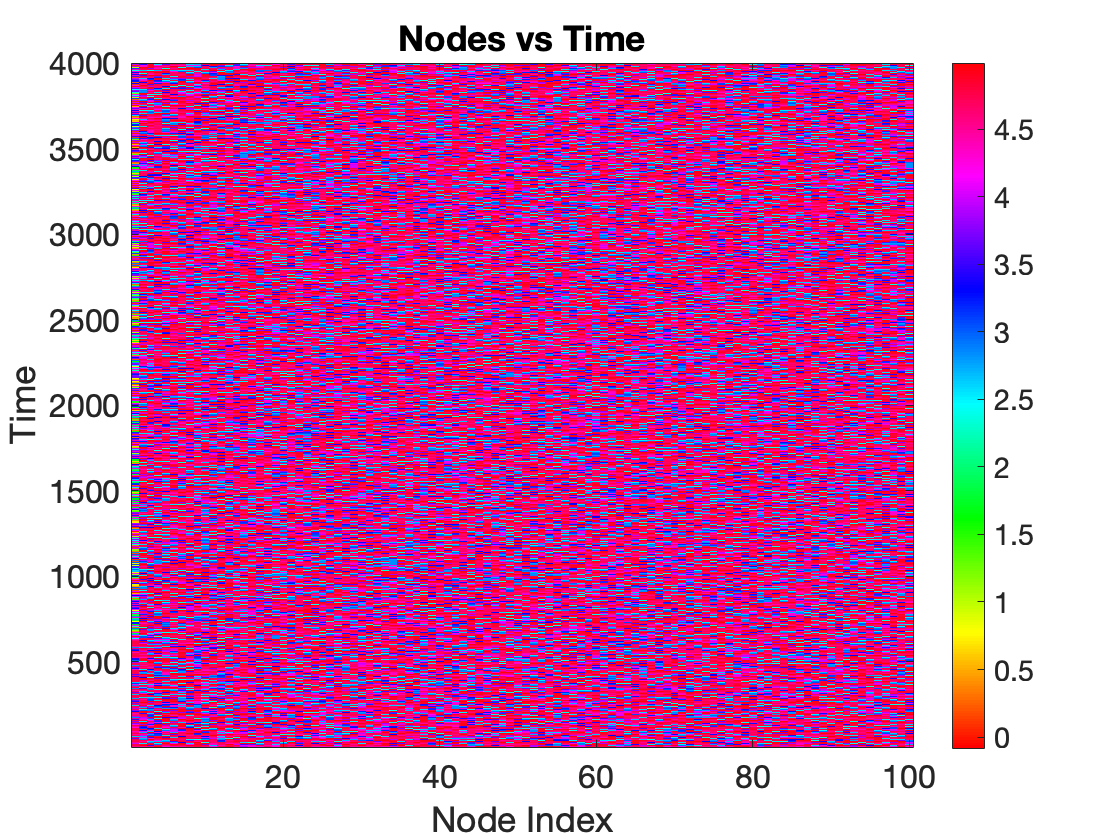}%
        }
        \vfill%
     \subfloat[]{%
        \includegraphics[width=0.37\textwidth]{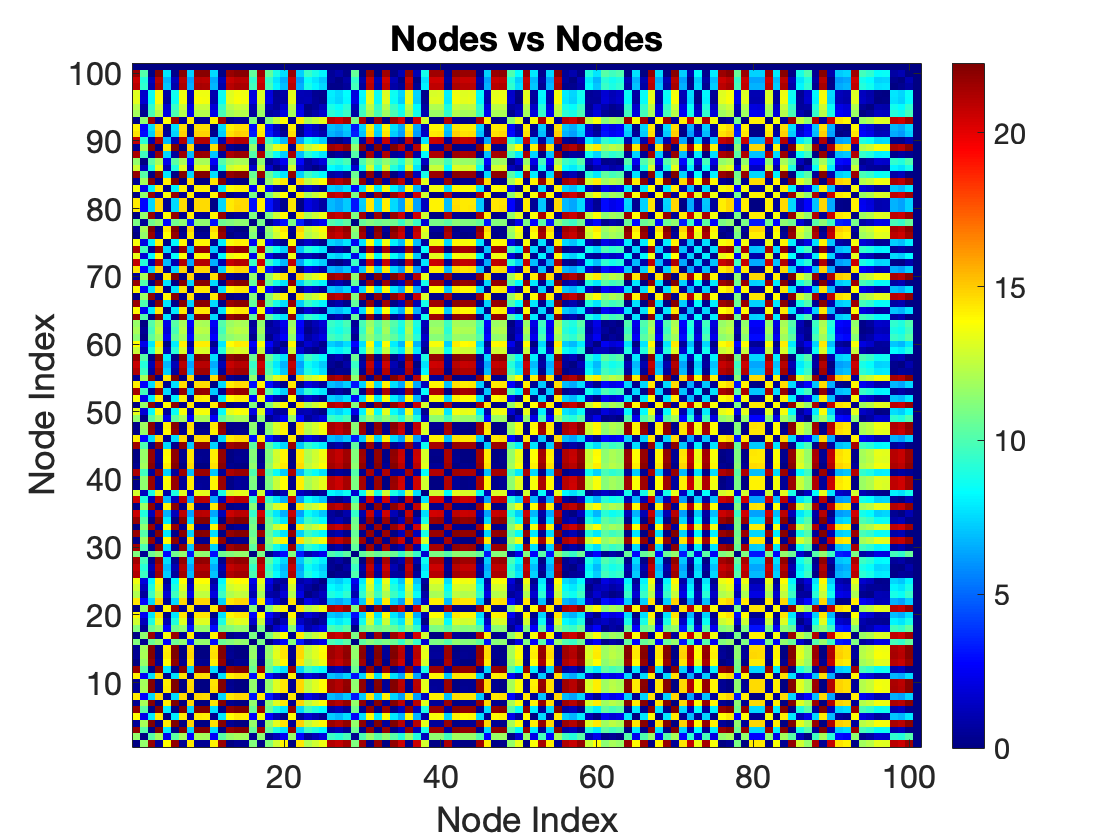}%
        }%
    %\hfill%
    \subfloat[\tiny{Transition state, $\mu$=0.0007, $r$=1}]{%
        \includegraphics[width=0.37\textwidth]{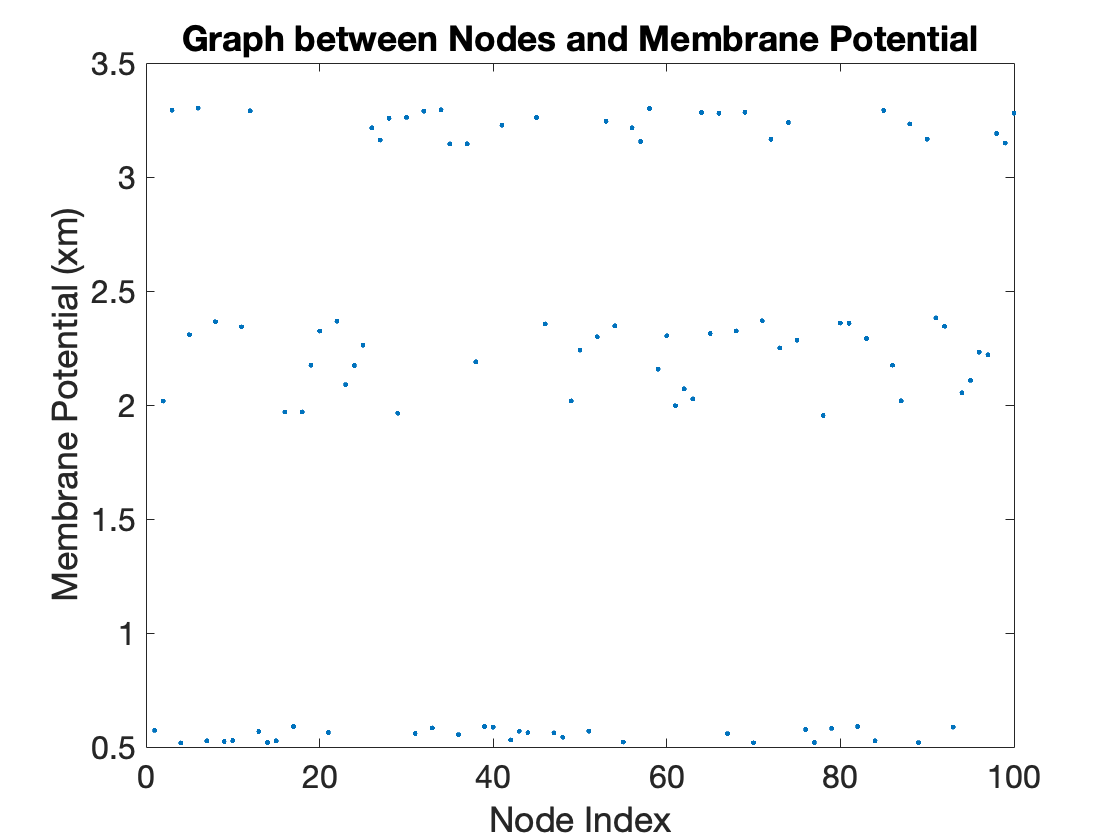}%
        }%
    %\hfill%
    \subfloat[]{%
        \includegraphics[width=0.37\textwidth]{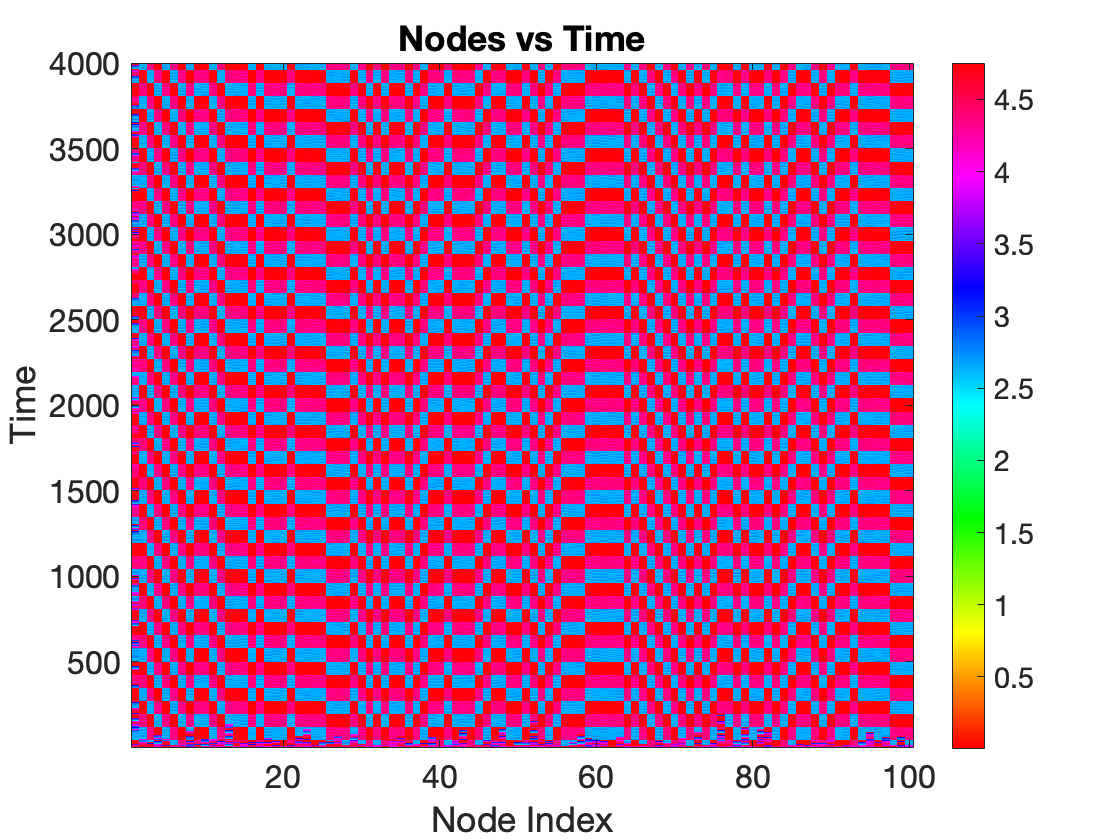}%
        }%
        \vfill%
     \subfloat[]{%
        \includegraphics[width=0.37\textwidth]{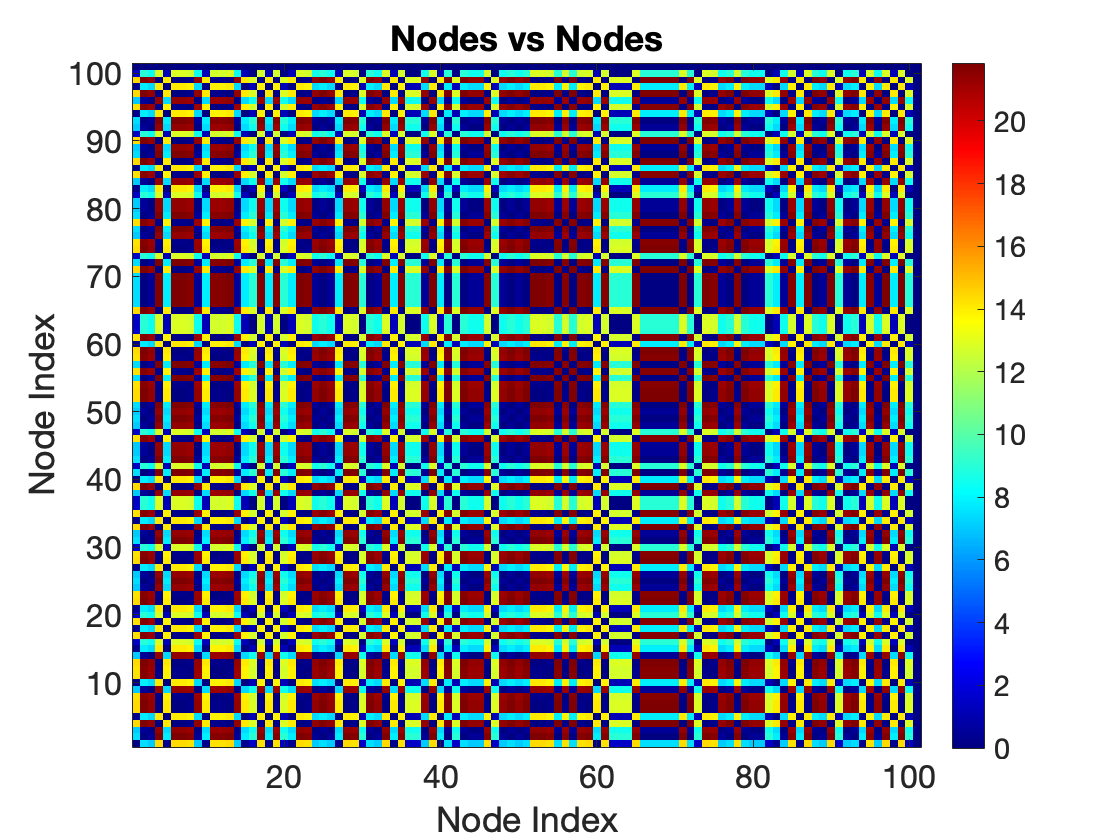}%
        }%
    %\hfill%
    \subfloat[\tiny{Five-clustered state, $\mu$=0.0003, $r$=0.98}]{%
        \includegraphics[width=0.37\textwidth]{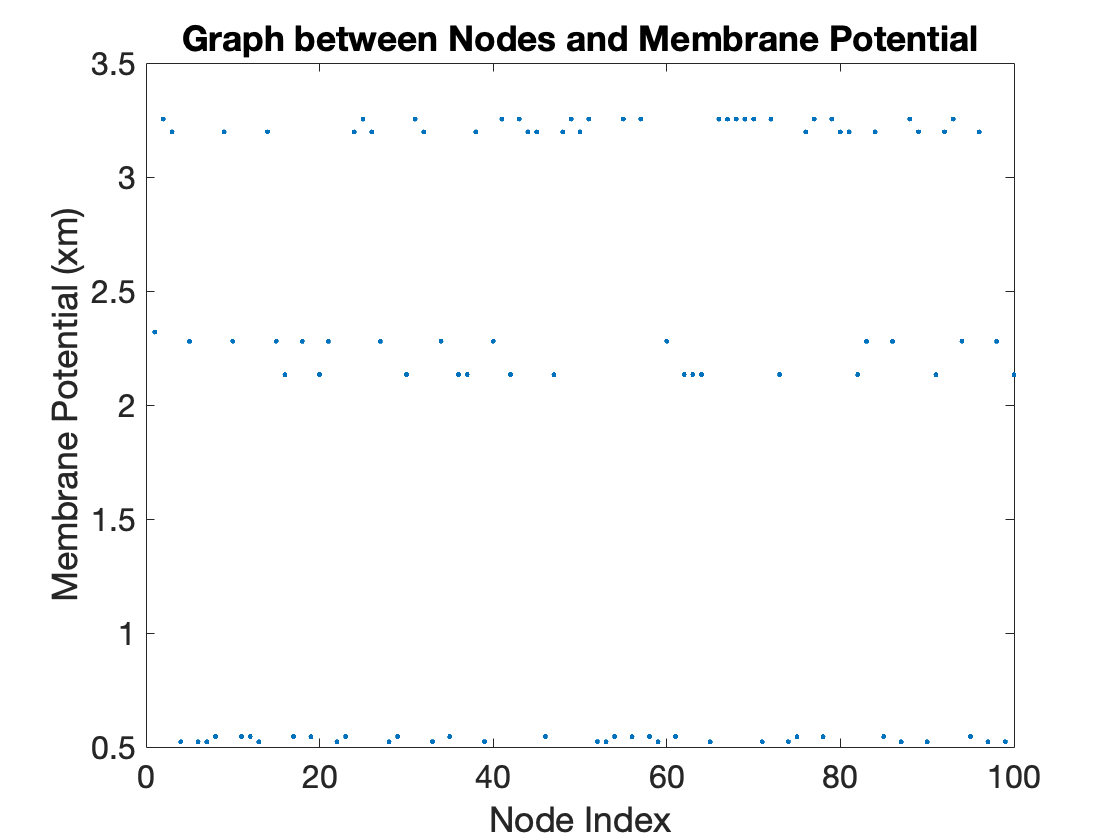}%
        }%
    %\hfill%
    \subfloat[]{%
        \includegraphics[width=0.37\textwidth]{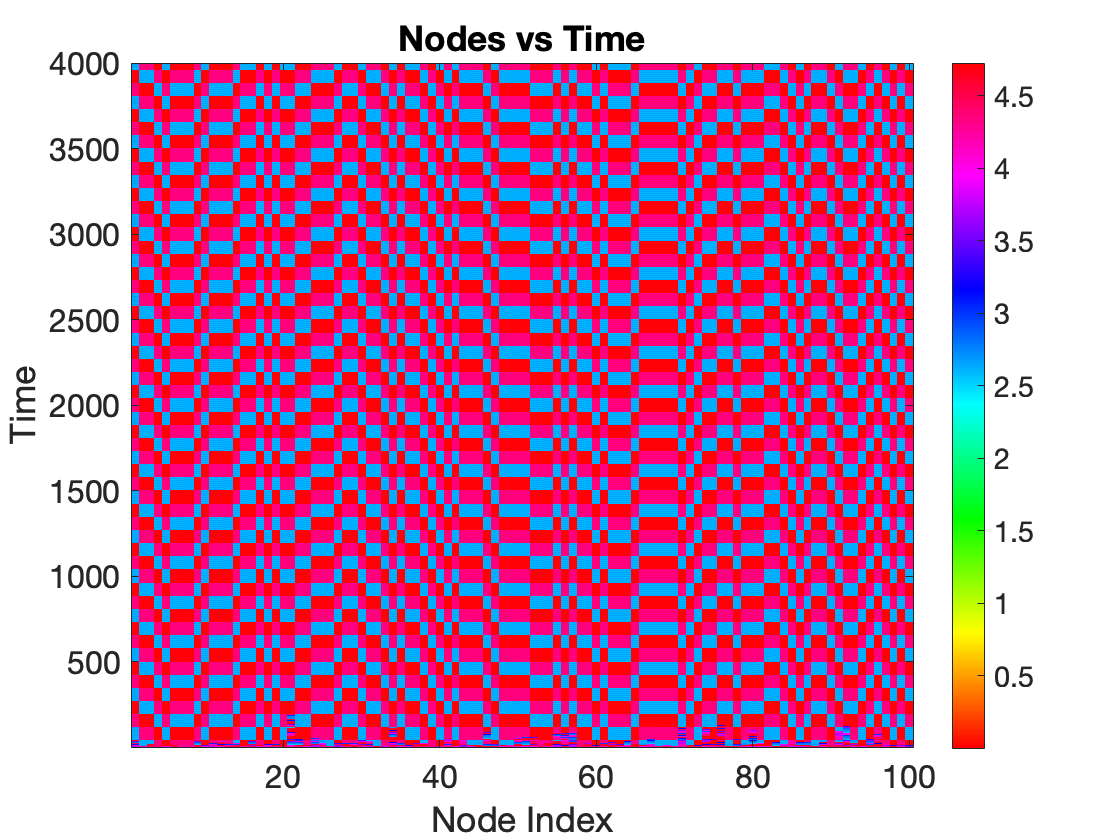}%
        }%
        \vfill%
     \subfloat[]{%
        \includegraphics[width=0.37\textwidth]{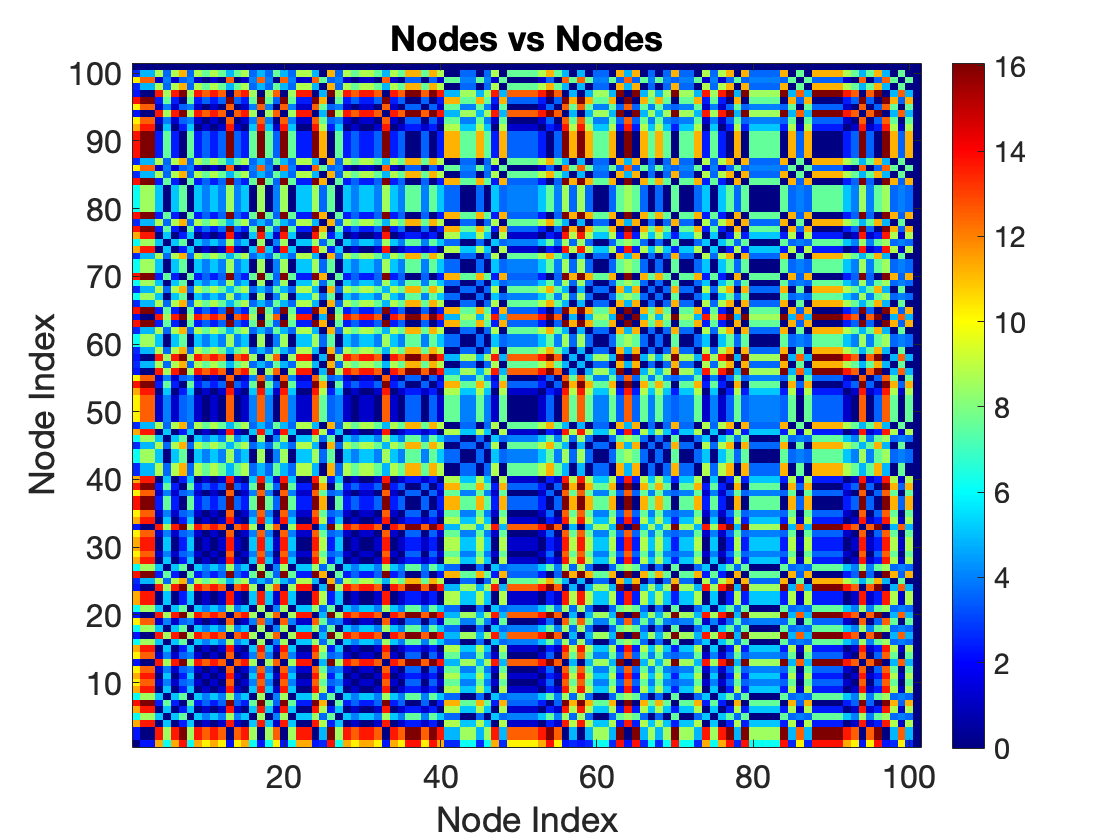}%
        }%
    %\hfill%
    \subfloat[\tiny{ Six-clustered state, $\mu$=0.0001, $r$=0.8}]{%
        \includegraphics[width=0.37\textwidth]{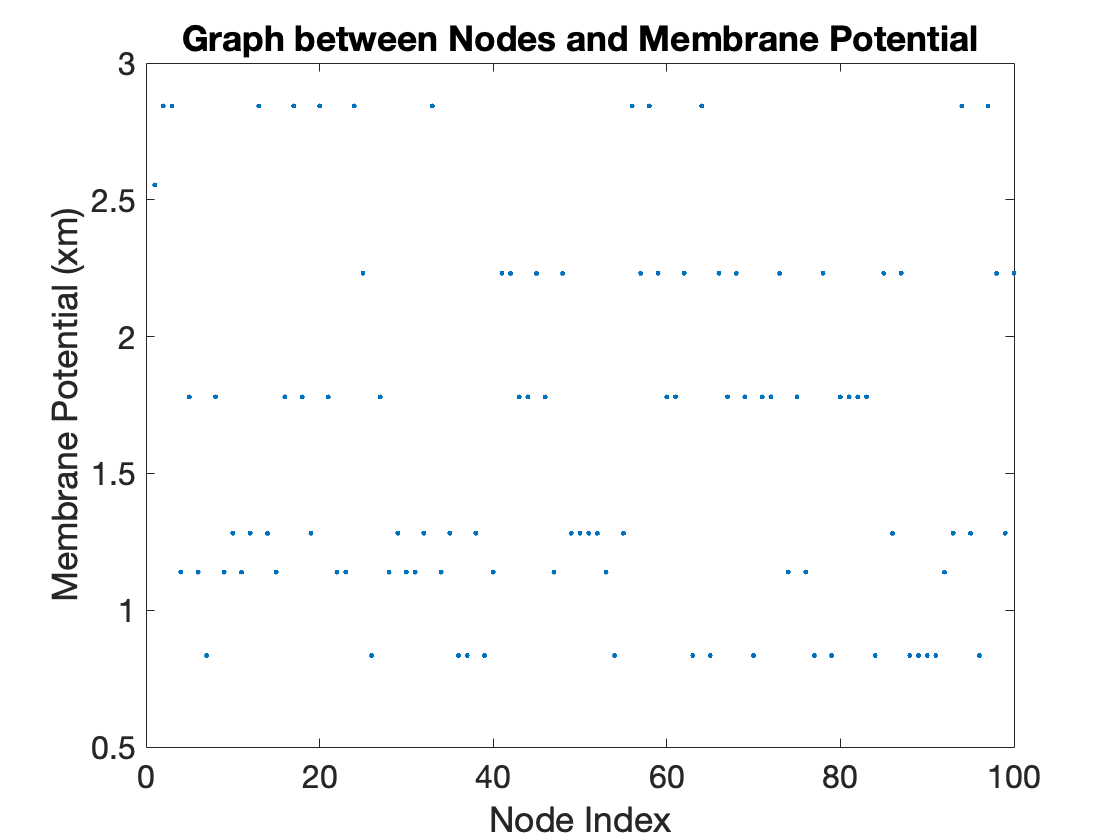}%
        }%
    %\hfill%
    \subfloat[]{%
        \includegraphics[width=0.37\textwidth]{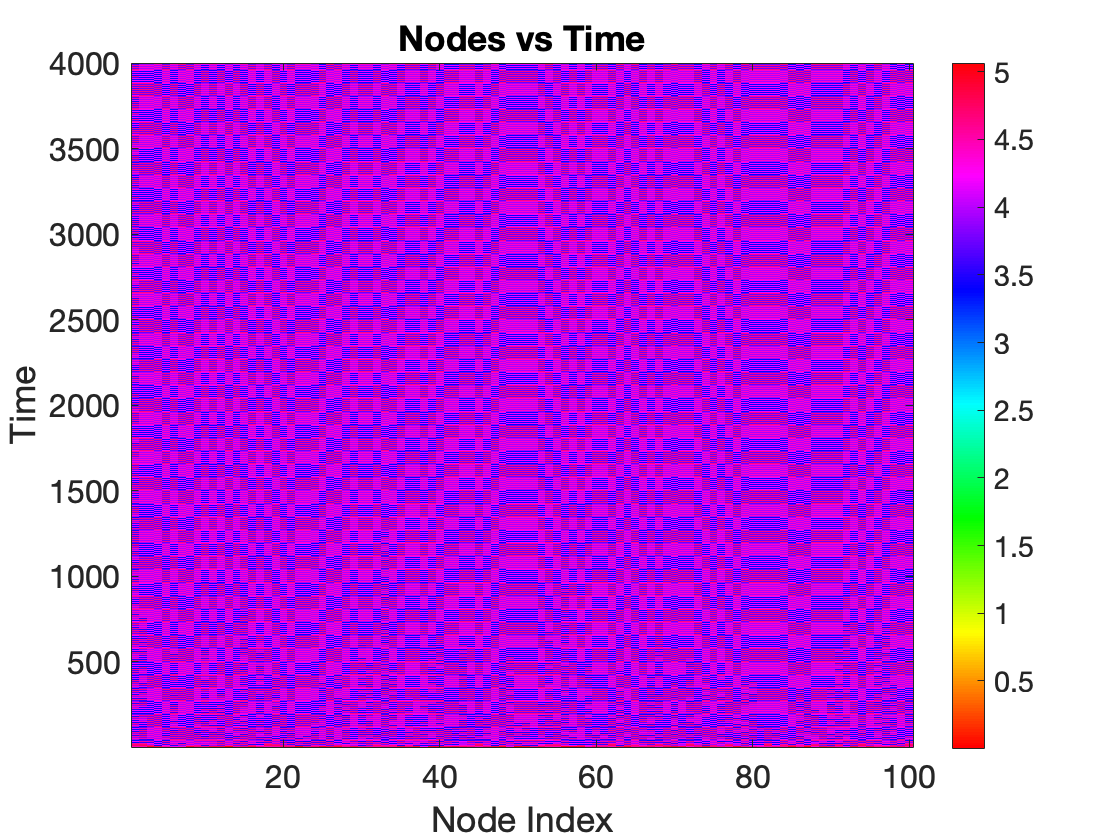}%
        }%
    \caption{Spatiotemporal pattern under the star network of the map $\mathcal{M}_{r,k}(x,\phi)$ by varying control parameter $\mu$. The first row illustrates the unsynchronized behavior of the nodes, while the second row depicts the transition state. The third row illustrates the five clustered states. The fourth row shows a six-clustered synchronized state. Parameters are considered as: $k_0=2, k_1=0.3, k_2,=0.5,$ and $ k=-0.5.$}\label{star}
\end{figure}

We discuss the scenario when there is no connection between the ring network, and all the nodes are connected to each other through the central node (i.e., ring coupling strength $\sigma=0$). Now, $\mu$ is the control parameter in this network, and by varying $\mu$ between $0.0001$ to $0.001$, we witness different spatiotemporal patterns, as shown in Fig.\ref{star}. In the first row of the Fig.\ref{star} (i.e., (a),(b),(c)) for the parameter $\mu=0.001$, we witness the unsynchronized state in the star network of the map $\mathcal{M}_{r,k}(x,\phi)$. In the second row of Fig.\ref{star} (i.e., (d),(e),(f)) for the parameter $\mu=0.0007$, we witness the transition state in the network dynamics.  In the third row of Fig.\ref{star} (i.e., (g),(h),(i)) for the parameter $\mu=0.0003$, the system exhibits the five clustered states in the star network.  Finally, in the fourth row of the Fig.\ref{star} (i.e., (j),(k),(l)) for the parameter $\mu=0.0001$, we observe a high number of clustered states in the network dynamics of the map $\mathcal{M}_{r,k}(x,\phi)$.

\subsection{Multi-chimera state}   
Next, we examine the key spatiotemporal patterns that appear while exploring the network governed by map $\mathcal{M}_{r,k}(x,\phi)$ under different topologies. Until now, chimera states have been observed in both ring and ring-star networks. However, an important question arises: Does this network exhibit more than one chimera state? To answer this, we vary the coupling strengths in ring-star configurations and found that the system exhibits a multi-chimera state. In particular, we identified a chimera pattern consisting of eight coherent groups for the parameter values $\sigma=0.41$ and $\mu=0.0004$. This is noteworthy because most existing studies focused only on single chimera states or their variants \cite{muni2022dynamical,kumar2026study}.

\begin{figure}[ht!] 
    \centering
    \subfloat[$\sigma=0.41$, $\mu=0.0004$]{%
        \includegraphics[width=0.37\textwidth]{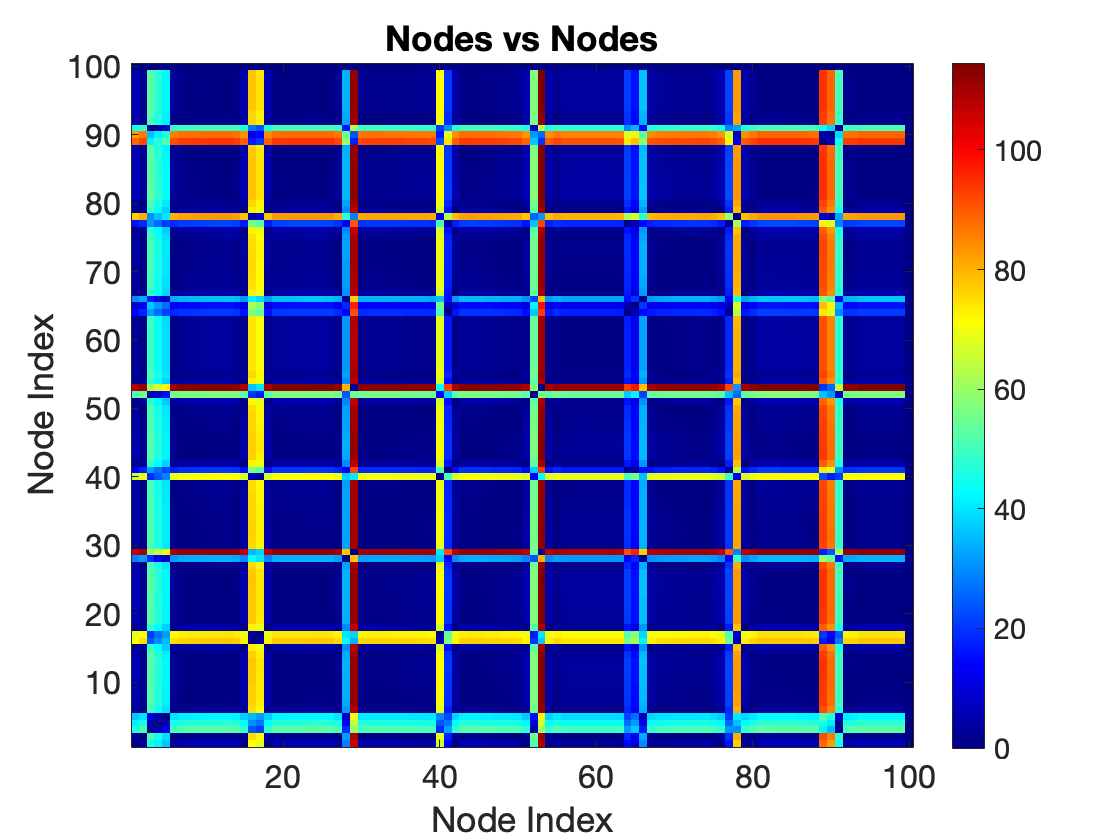}%
        }%
    %\hfill%
    \subfloat[Multi-chimera state]{%
        \includegraphics[width=0.37\textwidth]{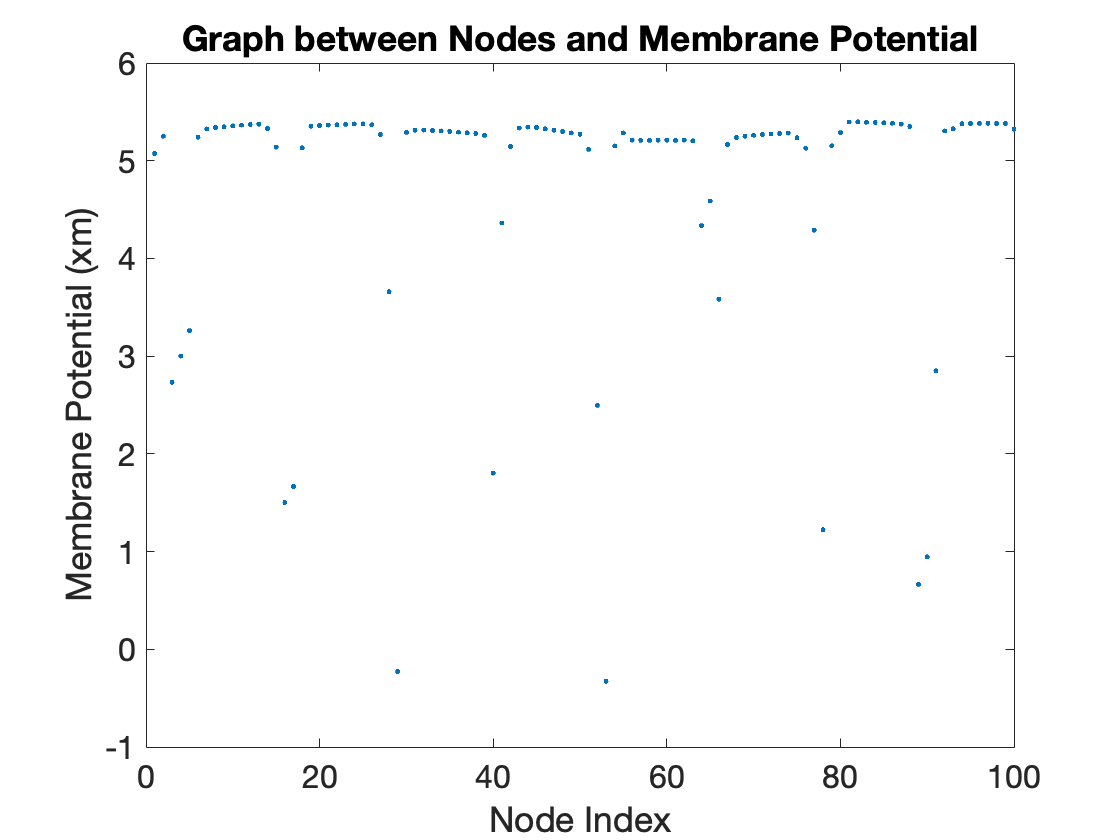}%
        }%
    %\hfill%
    \subfloat[Eight coherent groups]{%
        \includegraphics[width=0.37\textwidth]{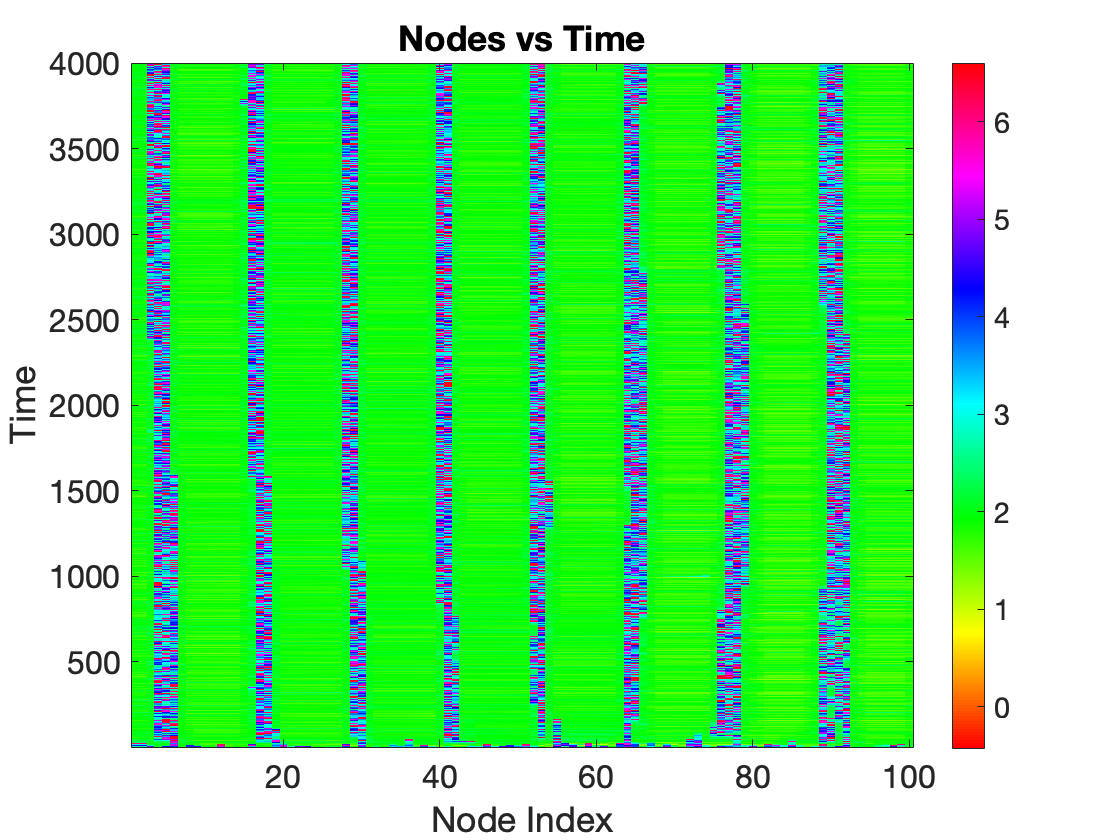}%
        }
\caption{Multi-chimera state in ring-star network with eight coherent groups. Parameters are considered as: $k_0=0.5, k_1=0.1, k_2=0.03, k=0.1, $ and $r=2.15.$}\label{multi_chimera}
\end{figure}

% \vspace{-1.3cm}
\subsection{Other interesting Spatiotemporal patterns}

\begin{figure}[ht!] 
    \centering
    \subfloat[]{%
        \includegraphics[width=0.37\textwidth]{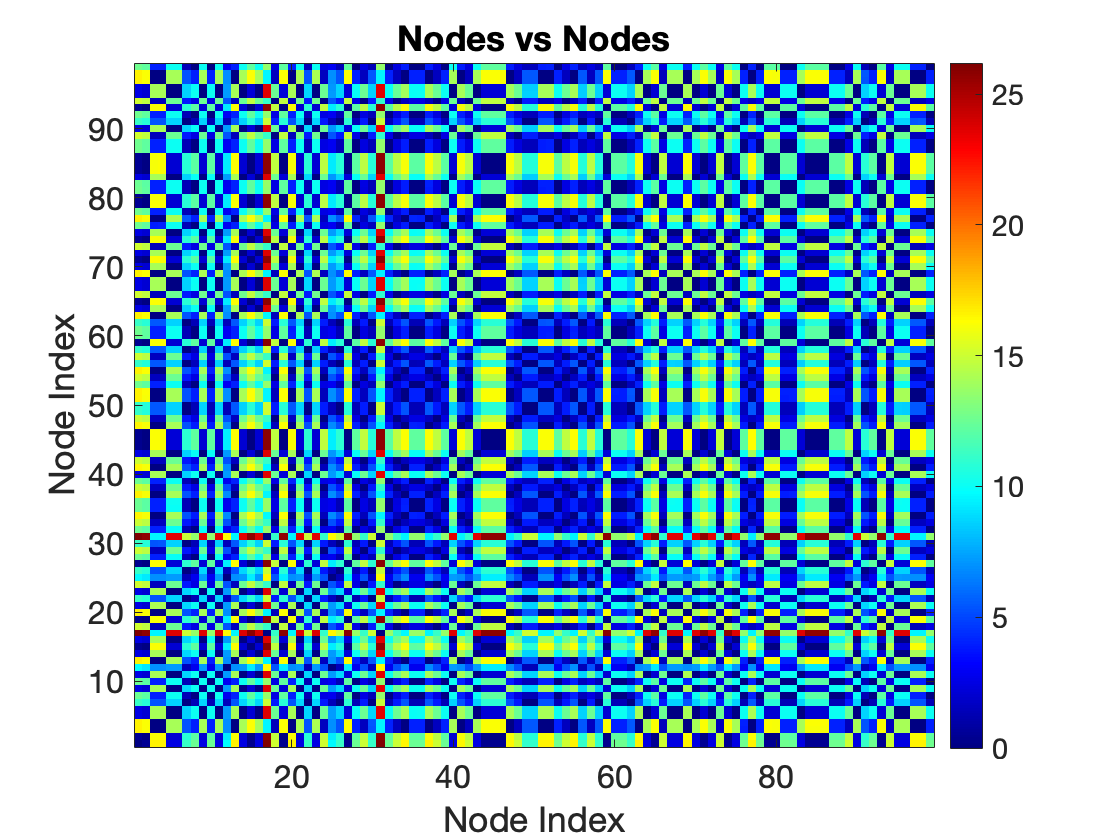}%
        }%
    %\hfill%
    \subfloat[Clustered state]{%
        \includegraphics[width=0.37\textwidth]{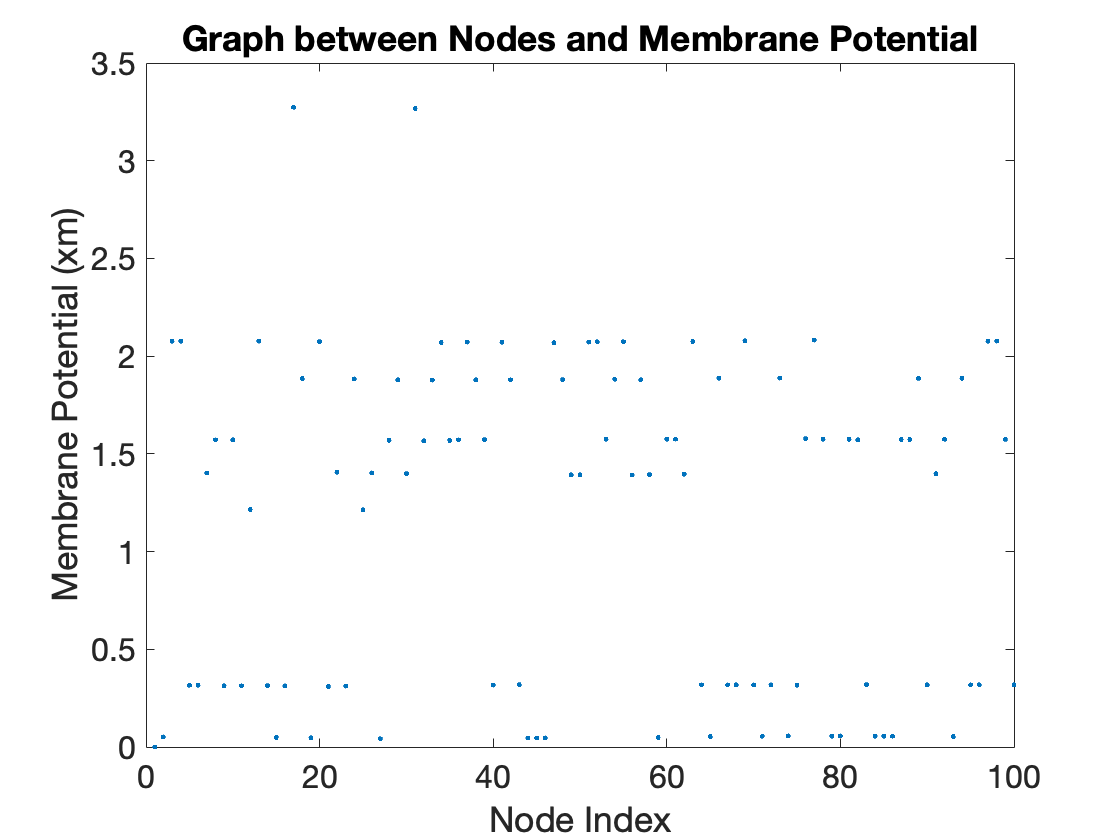}%
        }%
    %\hfill%
    \subfloat[]{%
        \includegraphics[width=0.37\textwidth]{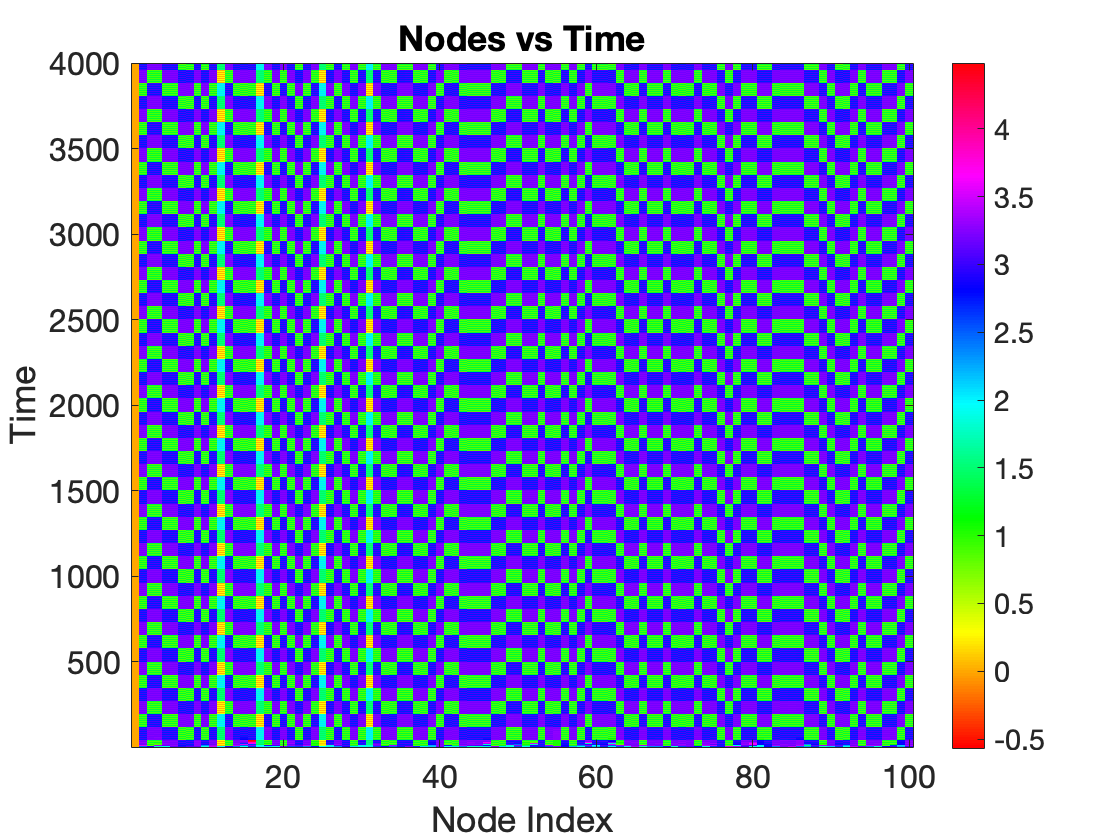}%
        }
        \vfill
        \subfloat[]{%
        \includegraphics[width=0.37\textwidth]{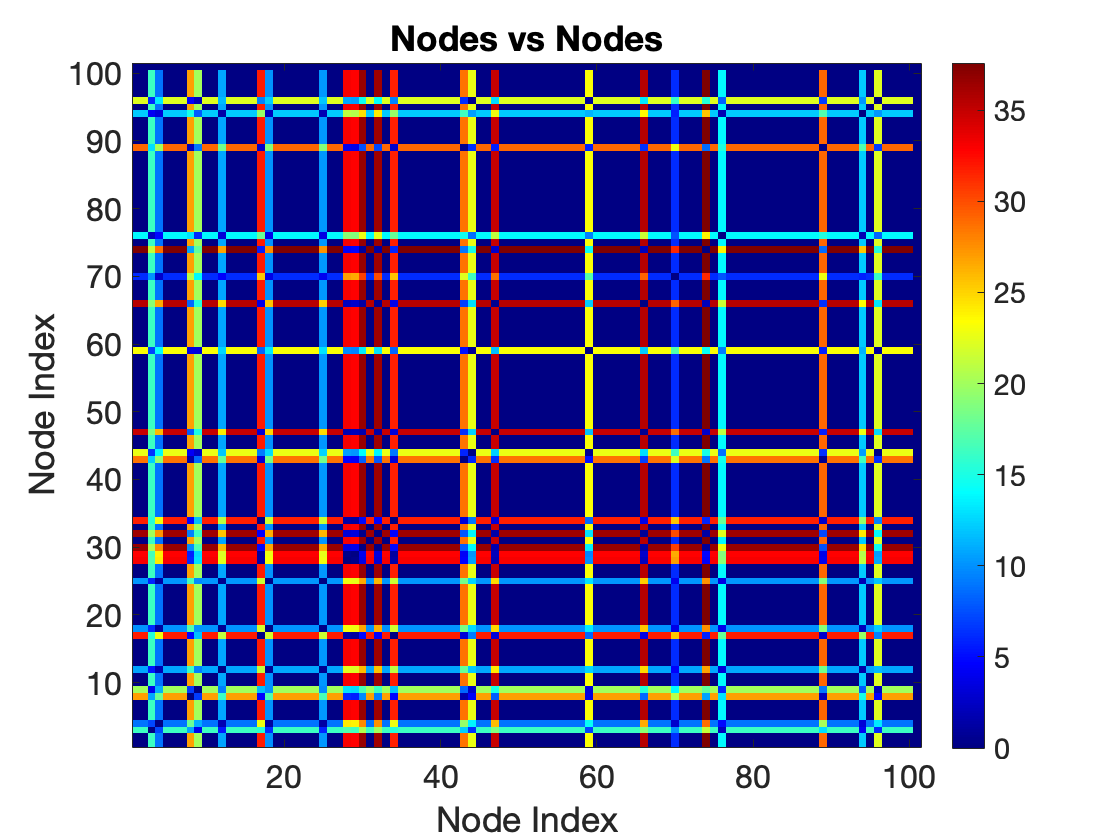}%
        }%
    %\hfill%
    \subfloat[The imperfect synchronized]{%
        \includegraphics[width=0.37\textwidth]{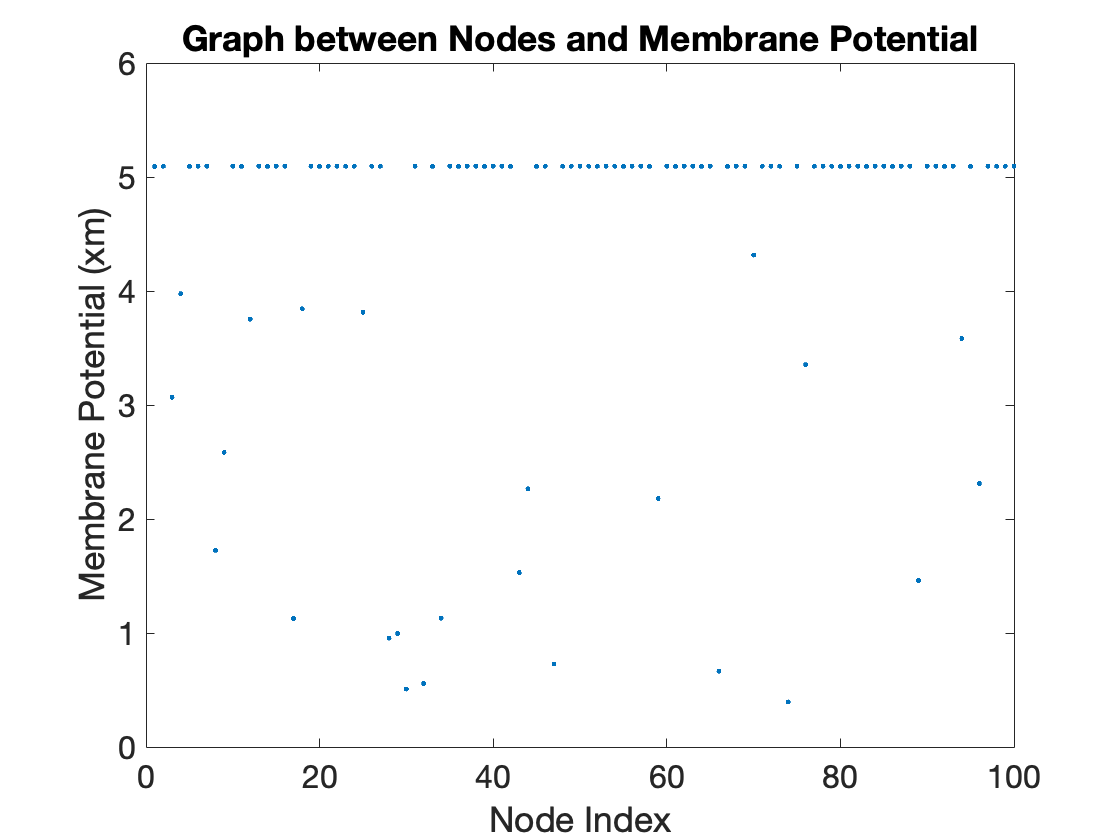}%
        }%
    %\hfill%
    \subfloat[]{%
        \includegraphics[width=0.37\textwidth]{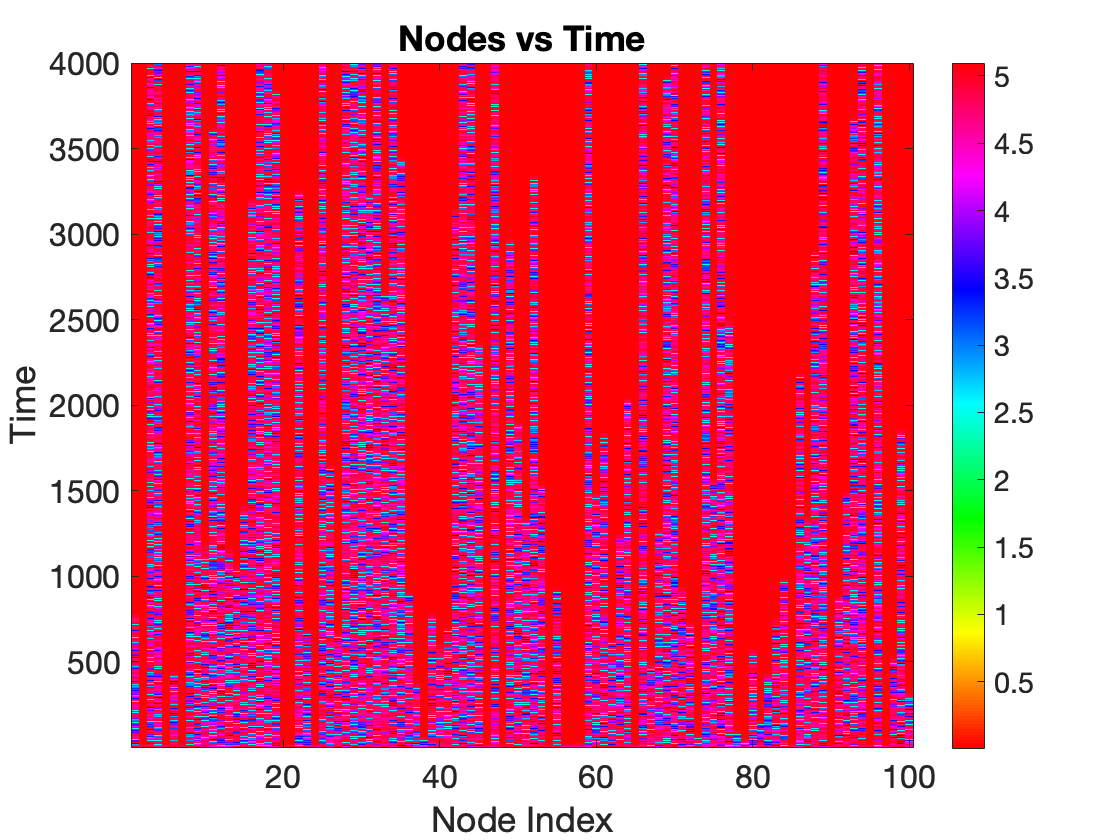}%
        }
\caption{(a),(b),(c) shows the six clustered states in the system under the ring network. (d),(e),(f) shows the imperfect synchronized state in the system under the star network.}\label{other_network}
\end{figure}

Additionally, we highlight some interesting patterns that are also observed while exploring ring and star network behavior with the variation of network coupling parameter, as shown in Fig. \ref{other_network}. First, Fig.\ref{other_network}(a),(b),(c) shows the six clustered states under the ring network for the parameters as $k_0=0.8, k_1=0.3, k_2=0.5, k=-0.7, r=1.3, $ and $ \sigma=0.01$.  Then, in Fig.\ref{other_network}(d) (e),(f) show the imperfect synchronized patterns in the star network for keeping the parameters as $k_0=0.8, k_1=0.3, k_2=0.5, k=-0.5, r=1.243, $ and $ \mu=0.0001$, as seen in a recent study \cite{vivekanandhan2022firing}. Such an intriguing dynamical state demonstrates the dynamical richness of the map $\mathcal{M}_{r,k}(x,\phi)$, highlighting the significance of our study in examining its network dynamical behavior.

%%%%%%%%%%%%-

%----------------------------------------------------------------
% \newpage
\section{Conclusion}

In this study, we introduced a novel two-dimensional discrete neuron model by embedding a cosine-based memristor into a reduced Chialvo map. The characteristic pinched hysteresis loops (PHLs) observed under various conditions confirm that the model is in accordance with the fundamental properties of a memristor. By incorporating this nonlinear memristive component, the model achieved a significant enhancement in dynamical richness while preserving a low-dimensional structure, making it computationally efficient yet behaviorally diverse. Through analytical and numerical analyses, we have uncovered a broad spectrum of dynamical phenomena, including the existence and uniqueness of fixed points, and explored diverse bifurcation structures such as Neimark-Sacker (NS), codimension-one, and codimension-two bifurcations. Additionally, this study confirms the presence of complex behaviors, such as multistability, spiking, and bursting. The presence of chaotic attractors and geometric complexity is further calculated by using the fractal dimension, which confirms the system's intricate nonlinear dynamics.

Beyond the single-neuron framework, we extended our investigation to the network configuration using the ring-star topological structure. This revealed a variety of rich spatiotemporal patterns, including imperfect synchronization, clustered states, traveling waves, and the presence of multichimera states, which demonstrated the model's capacity to replicate key features of a neuron's network. These findings highlight the model's ability to capture diverse neuron interactions, reflective of real neural circuits.

Compared to earlier studies such as \cite{muni2022dynamical,kumar2026study}, our model presents notable advancements. Whereas previous models were limited to identifying only two categories of fixed points, our framework provides a more comprehensive classification that encompasses stable, saddle, and unstable fixed points. Moreover, while prior works relied only on numerical bifurcation studies, the presented work rigorously establishes the conditions under which the map exhibits the NS-bifurcations. Importantly, the proposed model supports more complex network phenomena, including multi-chimera and wavy cluster states, surpassing the simpler chimera dynamics observed in earlier work. Compared to the three-dimensional model in \cite{muni2022dynamical}, this two-dimensional system retains all essential dynamics, including the forward and backward bifurcation patterns, while avoiding their collapse, and moreover, it also confirms the presence of multistability.   

In summary, the proposed memristor-coupled discrete neuron model offers a powerful yet compact framework that captures a wide range of nonlinear behaviors observed in higher-dimensional systems. These advancements establish a solid foundation for future investigations into theoretical neuroscience.  Furthermore, the model opens promising directions for practical applications, including neuromorphic hardware and brain-inspired computations.

% \bibliographystyle{unsrt}  
%\bibliography{references}  %%% Remove comment to use the external .bib file (using bibtex).
%%% and comment out the ``thebibliography'' section.

%%% Comment out this section when you \bibliography{references} is enabled.
% \begin{thebibliography}{1}
\bibliographystyle{unsrt}  
\bibliography{ref}

@article{izhikevich2000neural,
  title={Neural excitability, spiking and bursting},
  author={Izhikevich, Eugene M},
  journal={International journal of bifurcation and chaos},
  volume={10},
  number={06},
  pages={1171--1266},
  year={2000},
  publisher={World Scientific}
}

@article{hodgkin1952quantitative,
  title={A quantitative description of membrane current and its application to conduction and excitation in nerve},
  author={Hodgkin, Alan L and Huxley, Andrew F},
  journal={The Journal of physiology},
  volume={117},
  number={4},
  pages={500},
  year={1952}
}

@article{fitzhugh1955mathematical,
  title={Mathematical models of threshold phenomena in the nerve membrane},
  author={FitzHugh, Richard},
  journal={The bulletin of mathematical biophysics},
  volume={17},
  number={4},
  pages={257--278},
  year={1955},
  publisher={Springer}
}

@article{morris1981voltage,
  title={Voltage oscillations in the barnacle giant muscle fiber},
  author={Morris, Catherine and Lecar, Harold},
  journal={Biophysical journal},
  volume={35},
  number={1},
  pages={193--213},
  year={1981},
  publisher={Elsevier}
}

@article{hindmarsh1984model,
  title={A model of neuronal bursting using three coupled first order differential equations},
  author={Hindmarsh, James L and Rose, RM},
  journal={Proceedings of the Royal society of London. Series B. Biological sciences},
  volume={221},
  number={1222},
  pages={87--102},
  year={1984},
  publisher={The Royal Society London}
}

@article{izhikevich2007dynamical,
  author = {Izhikevich, E},
  description = {bib-komplett},
  journal = {MIT Press},
  month = Jul,
  pages = 111,
  title = {Dynamical Systems In Neuroscience},
  uri = {papers://7B65697B-E216-4648-8A41-C67830C0DC73/Paper/p75},
  year = 2007
}

@article{chialvo1995generic,
  title={Generic excitable dynamics on a two-dimensional map},
  author={Chialvo, Dante R},
  journal={Chaos, Solitons \& Fractals},
  volume={5},
  number={3-4},
  pages={461--479},
  year={1995},
  publisher={Elsevier}
}

@article{chua2003memristor,
  title={Memristor-the missing circuit element},
  author={Chua, Leon},
  journal={IEEE Transactions on circuit theory},
  volume={18},
  number={5},
  pages={507--519},
  year={1971},
  publisher={IEEE}
}

@article{cao2024discrete,
  title={A discrete {C}hialvo--{R}ulkov neuron network coupled with a novel memristor model: Design, {D}ynamical analysis, {DSP} implementation and its application},
  author={Cao, Hongli and Wang, Yu and Banerjee, Santo and Cao, Yinghong and Mou, Jun},
  journal={Chaos, Solitons \& Fractals},
  volume={179},
  pages={114466},
  year={2024},
  publisher={Elsevier}
}

@article{wang2025spatiotemporal,
  title={Spatiotemporal dynamics and synchronization in a memristive {C}hialvo neural network},
  author={Wang, Huihai and Chen, Hanqi and Sun, Kehui and Zhu, Wanting and Yao, Zhao},
  journal={Nonlinear Dynamics},
  volume={113},
  number={9},
  pages={10365--10377},
  year={2025},
  publisher={Springer}
}

@article{bao2021discrete,
  title={Discrete memristor hyperchaotic maps},
  author={Bao, Han and Hua, Zhongyun and Li, Houzhen and Chen, Mo and Bao, Bocheng},
  journal={IEEE Transactions on Circuits and Systems I: Regular Papers},
  volume={68},
  number={11},
  pages={4534--4544},
  year={2021},
  publisher={IEEE}
}

@article{peng2020discrete,
  title={A discrete memristor model and its application in {H}{\'e}non map},
  author={Peng, Yuexi and Sun, Kehui and He, Shaobo},
  journal={Chaos, Solitons \& Fractals},
  volume={137},
  pages={109873},
  year={2020},
  publisher={Elsevier}
}

@article{yu2022dynamic,
  title={Dynamic analysis and application in medical digital image watermarking of a new multi-scroll neural network with quartic nonlinear memristor},
  author={Yu, Fei and Chen, Huifeng and Kong, Xinxin and Yu, Qiulin and Cai, Shuo and Huang, Yuanyuan and Du, Sichun},
  journal={The European Physical Journal Plus},
  volume={137},
  number={4},
  pages={434},
  year={2022},
  publisher={Springer}
}

@article{chen2023new,
  title={A new mix chaotic circuit based on memristor--memcapacitor},
  author={Chen, Yixin and Mou, Jun and Jahanshahi, Hadi and Wang, Zhisen and Cao, Yinghong},
  journal={The European Physical Journal Plus},
  volume={138},
  number={1},
  pages={78},
  year={2023},
  publisher={Springer Berlin Heidelberg}
}

@article{xu2024dynamical,
  title={Dynamical effects of memristive electromagnetic induction on a {2D} {W}ilson neuron model},
  author={Xu, Quan and Wang, Kai and Shan, Yufan and Wu, Huagan and Chen, Mo and Wang, Ning},
  journal={Cognitive Neurodynamics},
  volume={18},
  number={2},
  pages={645--657},
  year={2024},
  publisher={Springer}
}

@article{wang2025complex,
  title={Complex synchronization in memristor-coupled {C}hialvo {N}eurons},
  author={Wang, Lili and Gao, Hao and Li, Chunbiao and Tang, Qianyuan and Chialvo, Dante},
  journal={The European Physical Journal Special Topics},
  pages={1--15},
  year={2025},
  publisher={Springer}
}

@article{xiu2020new,
  title={New chaotic memristive cellular neural network and its application in secure communication system},
  author={Xiu, Chunbo and Zhou, Ruxia and Liu, Yuxia},
  journal={Chaos, Solitons \& Fractals},
  volume={141},
  pages={110316},
  year={2020},
  publisher={Elsevier}
}

@article{yu2021dynamics,
  title={Dynamics analysis, hardware implementation and engineering applications of novel multi-style attractors in a neural network under electromagnetic radiation},
  author={Yu, Fei and Shen, Hui and Zhang, Zinan and Huang, Yuanyuan and Cai, Shuo and Du, Sichun},
  journal={Chaos, Solitons \& Fractals},
  volume={152},
  pages={111350},
  year={2021},
  publisher={Elsevier}
}

@article{liu2022memcapacitor,
  title={Memcapacitor-coupled {C}hebyshev hyperchaotic map},
  author={Liu, Xingce and Mou, Jun and Yan, Huizhen and Bi, Xiuguo},
  journal={International Journal of Bifurcation and Chaos},
  volume={32},
  number={12},
  pages={2250180},
  year={2022},
  publisher={World Scientific}
}

@article{muni2022dynamical,
  title={Dynamical effects of electromagnetic flux on chialvo neuron map: nodal and network behaviors},
  author={Muni, Sishu Shankar and Fatoyinbo, Hammed Olawale and Ghosh, Indranil},
  journal={International Journal of Bifurcation and Chaos},
  volume={32},
  number={09},
  pages={2230020},
  year={2022},
  publisher={World Scientific}
}

@article{kumar2026study,
  title={Study of reduced {C}hialvo map with electromagnetic flux: Dynamics and network behavior},
  author={Kumar, Ajay and Chandramouli, VVMS},
  journal={Applied Mathematics and Computation},
  volume={509},
  pages={129650},
  year={2026},
  publisher={Elsevier}
}

@article{wang2024multi,
  title={Multi-chimera states in a higher order network of {F}itz{H}ugh--{N}agumo oscillators},
  author={Wang, Zhen and Chen, Mingshu and Xi, Xiaojian and Tian, Huaigu and Yang, Rui},
  journal={The European Physical Journal Special Topics},
  volume={233},
  number={4},
  pages={779--786},
  year={2024},
  publisher={Springer}
}

@article{adhikari2019three,
  title={Three Fingerprints of Memristor},
  author={Shyam Prasad Adhikari and Maheshwar Prasad Sah and Hyongsuk Kim and Leon Ong Chua},
  journal={IEEE Transactions on Circuits and Systems I: Regular Papers},
  year={2013},
  volume={60},
  pages={3008-3021},
  url={https://api.semanticscholar.org/CorpusID:12665998}
}

@article{din2017global,
  title={Global stability and {N}eimark-{S}acker bifurcation of a host-parasitoid model},
  author={Din, Qamar},
  journal={International Journal of Systems Science},
  volume={48},
  number={6},
  pages={1194--1202},
  year={2017},
  publisher={Taylor \& Francis}
}

@article{chandramouli2025dynamics,
  title={Dynamics of {2D} delayed {H}omographic {R}icker map},
  author={Chandramouli, VVMS and others},
  journal={International Journal of Dynamics and Control},
  volume={13},
  number={2},
  pages={52},
  year={2025},
  publisher={Springer Nature BV}
}

@article{kocarev1993experimental,
author = {KOCAREV, Lj. and HALLE, K. S. and ECKERT, K. and CHUA, L. O.},
title = {EXPERIMENTAL OBSERVATION OF ANTIMONOTONICITY IN CHUA'S CIRCUIT},
journal = {International Journal of Bifurcation and Chaos},
volume = {03},
number = {04},
pages = {1051-1055},
year = {1993},
doi = {10.1142/S0218127493000878},
URL = { 
https://doi.org/10.1142/S0218127493000878
},
eprint = {     
https://doi.org/10.1142/S0218127493000878
}
}

@article{dawson1992antimonotonicity,
  title={Antimonotonicity: inevitable reversals of period-doubling cascades},
  author={Dawson, Silvina P and Grebogi, Celso and Yorke, James A and Kan, Ittai and Ko{\c{c}}ak, H{\"u}seyin},
  journal={Physics Letters A},
  volume={162},
  number={3},
  pages={249--254},
  year={1992},
  publisher={Elsevier}
}

@book{kuznet2019numerical,
     TITLE = {Numerical bifurcation analysis of maps},
     AUTHOR = {Kuznetsov, Yuri A. and Meijer, Hil G. E.},
    SERIES = {Cambridge Monographs on Applied and Computational Mathematics},
    VOLUME = {34},
     YEAR = {2019},
      NOTE = {From theory to software},
 PUBLISHER = {Cambridge University Press},
 address   = {Cambridge},
     PAGES = {xiv+407},
      ISBN = {978-1-108-49967-5},
   MRCLASS = {37Gxx (65P30)},
  MRNUMBER = {3930602},
       DOI = {10.1017/9781108585804},
       URL = {https://doi.org/10.1017/9781108585804}
}

@article{depannemaecker2026next,
  title={A next generation neural mass model with neuromodulation},
  author={Depannemaecker, Damien and Duprat, Chlo{\'e} and Casagrande, Gabriele and Saggio, Marisa and Athanasiadis, Anastasios Polykarpos and Angiolelli, Marianna and Sales Carbonell, Carola and Wang, Huifang and Petkoski, Spase and Sorrentino, Pierpaolo and others},
  journal={Journal of Computational Neuroscience},
  pages={1--21},
  year={2026},
  publisher={Springer}
}

@article{muni2022discrete,
  title={Discrete hybrid Izhikevich neuron model: Nodal and network behaviours considering electromagnetic flux coupling},
  author={Muni, Sishu Shankar and Rajagopal, Karthikeyan and Karthikeyan, Anitha and Arun, Sundaram},
  journal={Chaos, Solitons \& Fractals},
  volume={155},
  pages={111759},
  year={2022},
  publisher={Elsevier}
}

@article{radushev2026metric,
  title={Metric framework of coherent activity patterns identification in spiking neuronal networks},
  author={Radushev, Daniil and Dogonasheva, Olesia and Gutkin, Boris and Zakharov, Denis},
  journal={Chaos, Solitons \& Fractals},
  volume={203},
  pages={117645},
  year={2026},
  publisher={Elsevier}
}

@article{ashwin2025global,
  title={Global bifurcations organizing weak chimeras in three symmetrically coupled {K}uramoto oscillators with inertia},
  author={Ashwin, Peter and Bick, Christian},
  journal={Journal of Nonlinear Science},
  volume={35},
  number={2},
  pages={45},
  year={2025},
  publisher={Springer}
}

@article{vivekanandhan2022firing,
  title={Firing patterns of {I}zhikevich neuron model under electric field and its synchronization patterns},
  author={Vivekanandhan, Gayathri and Hamarash, Ibrahim Ismael and Ali Ali, Ahmed M and He, Shaobo and Sun, Kehui},
  journal={The European Physical Journal Special Topics},
  volume={231},
  number={22},
  pages={4017--4023},
  year={2022},
  publisher={Springer}
}

% \bibitem{kour2014real}
% George Kour and Raid Saabne.
% \newblock Real-time segmentation of on-line handwritten arabic script.
% \newblock In {\em Frontiers in Handwriting Recognition (ICFHR), 2014 14th
%   International Conference on}, pages 417--422. IEEE, 2014.

% \bibitem{kour2014fast}
% George Kour and Raid Saabne.
% \newblock Fast classification of handwritten on-line arabic characters.
% \newblock In {\em Soft Computing and Pattern Recognition (SoCPaR), 2014 6th
%   International Conference of}, pages 312--318. IEEE, 2014.

% \bibitem{hadash2018estimate}
% Guy Hadash, Einat Kermany, Boaz Carmeli, Ofer Lavi, George Kour, and Alon
%   Jacovi.
% \newblock Estimate and replace: A novel approach to integrating deep neural
%   networks with existing applications.
% \newblock {\em arXiv preprint arXiv:1804.09028}, 2018.

% \end{thebibliography}

\section{Appendix}
% \vspace{-0.3cm}
\begin{figure}[ht!] 
    \centering
    \subfloat[$r=2$]{%
        \includegraphics[width=0.5\textwidth,height=0.23\textwidth]{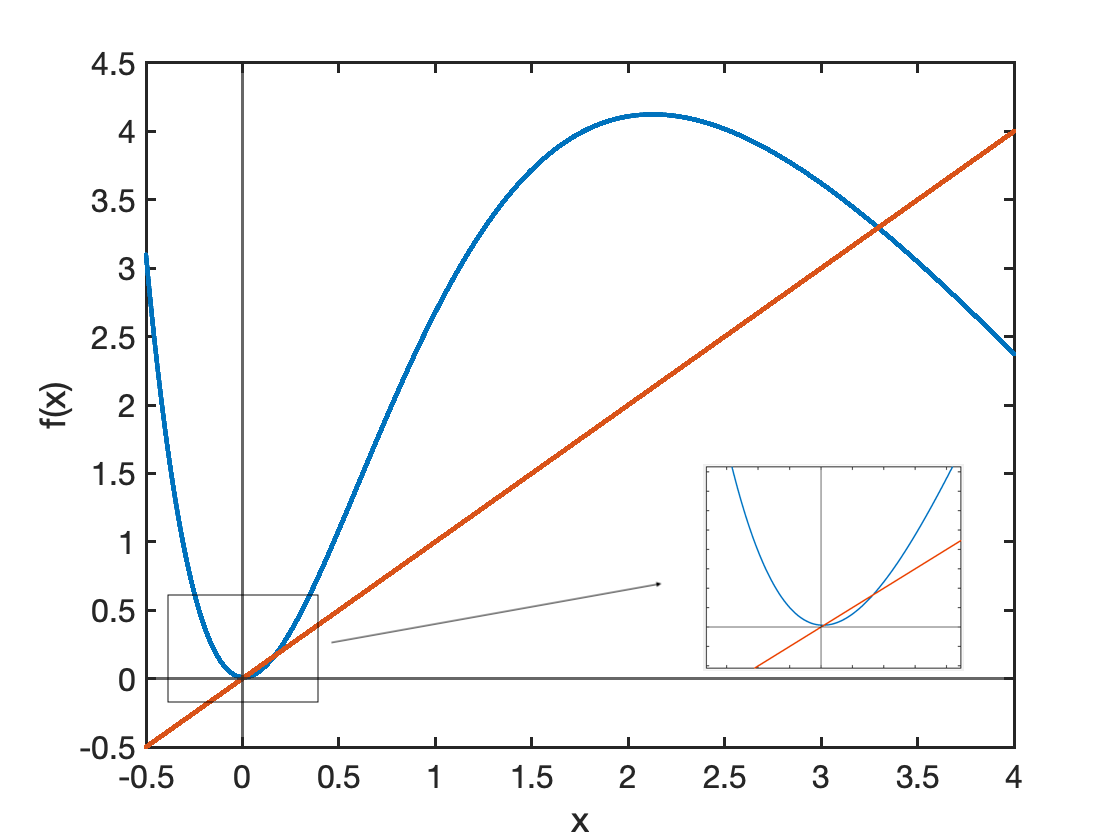}%
        }%
    % \hfill%
    \subfloat[$r=2.8$]{\includegraphics[width=0.5\textwidth,height=0.23\textwidth]{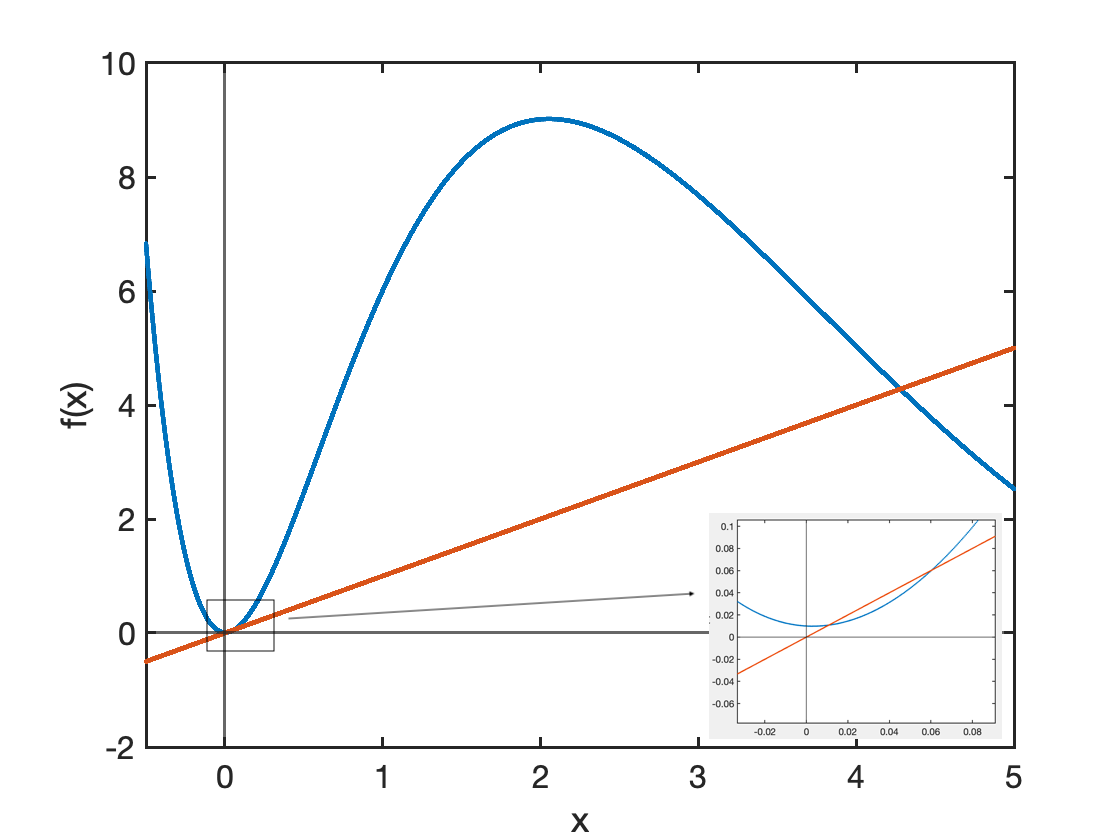}%
        }
        \vfill%
     \subfloat[$r=3$]{\includegraphics[width=0.5\textwidth,height=0.23\textwidth]{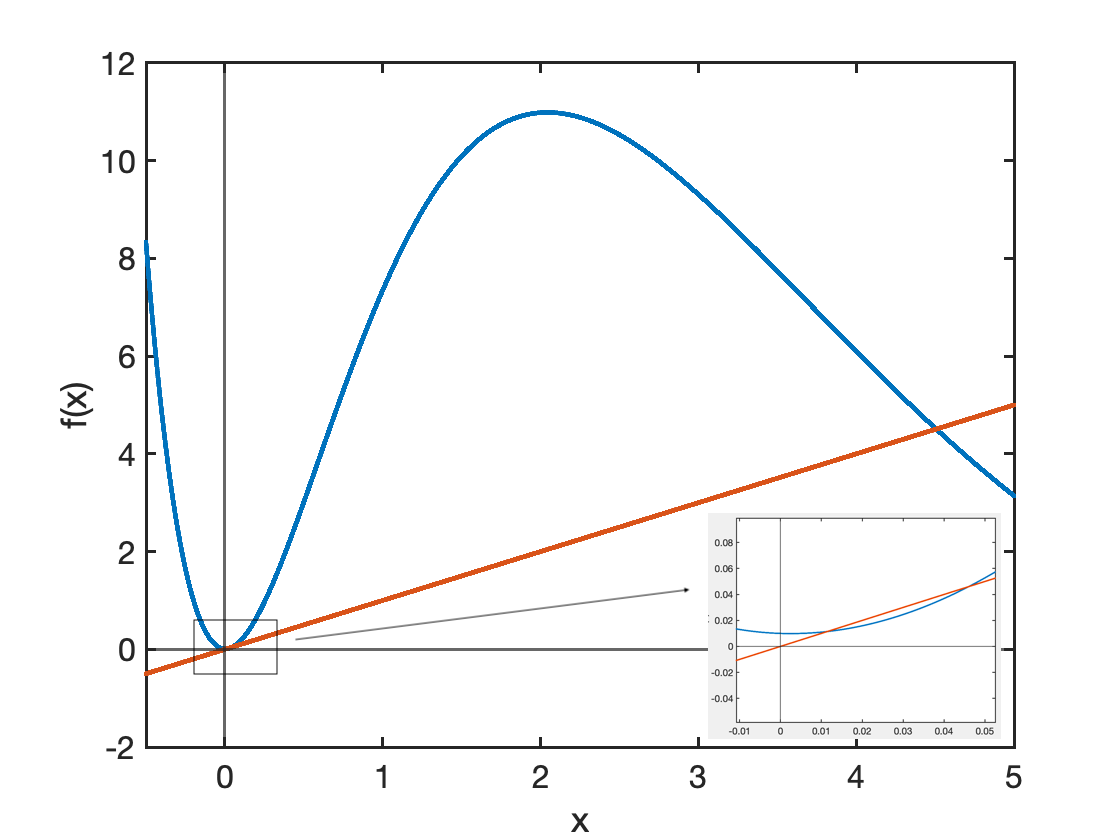}%
        }%
    % \hfill%
    \subfloat[$r=5$]{\includegraphics[width=0.5\textwidth,height=0.23\textwidth]{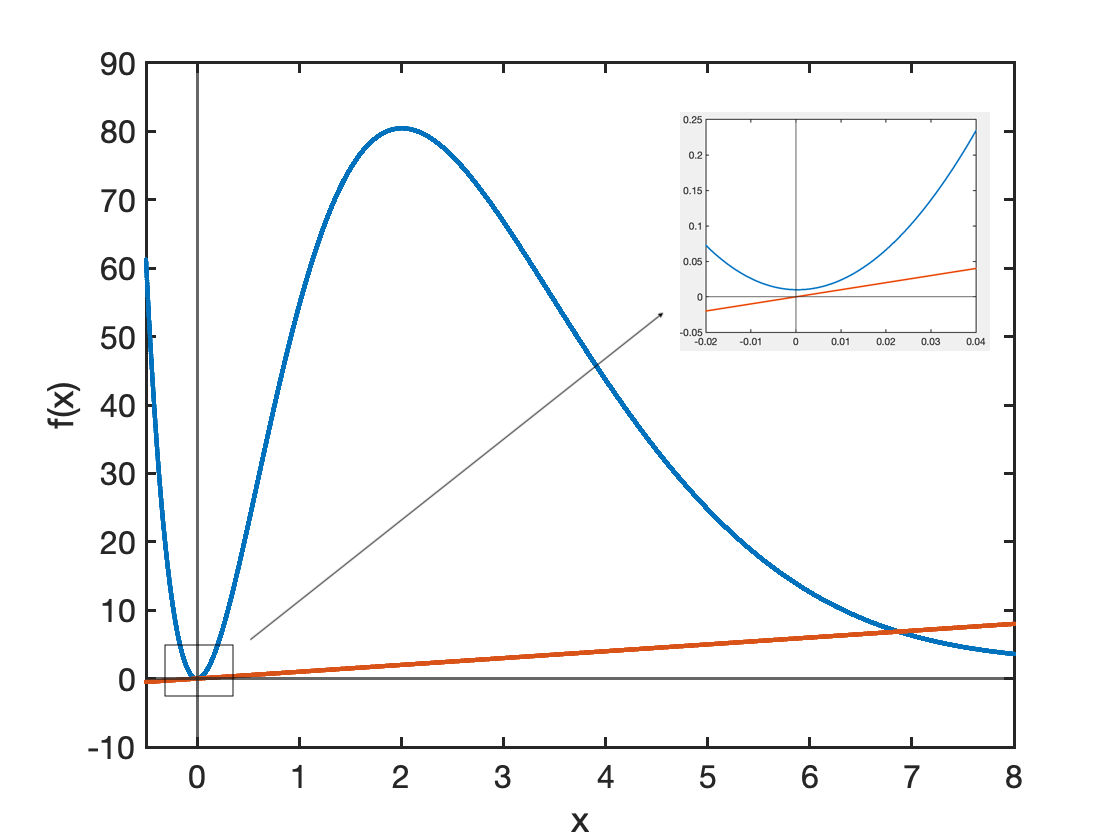}%
        }%
    \caption{ Fixed points of the map $\mathcal{M}_{r,k}(x,\phi)$ corresponding to specific parameter values $r$. Parameters are considered as: $ k_0=0.01, a_1=0.5, a_2=0.5, $ and $ k=-0.1$.}\label{Fig4}
\end{figure}

\end{document}